\newtheorem{problem}{Problem}
\newtheorem{example}{Example}
\newcommand{\ds}{\displaystyle}
\crefname{hypothesis}{Hypothesis}{Hypotheses}
\title{Anderson Acceleration as a Krylov Method with Application to Asymptotic Convergence Analysis\thanks{Submitted to the editors on Sept 29, 2021; this version resubmitted on Nov 22, 2022.
\funding{This work was funded by NSERC of Canada.}}}
\author{Hans De Sterck\thanks{Department of Applied Mathematics
University of Waterloo, 200 University Ave W, Waterloo, ON N2L 3G1, Canada
  (\email{hdesterck@uwaterloo.ca}, \email{yunhui.he@uwaterloo.ca}, \email{okrzysik@uwaterloo.ca}).}
  \and Yunhui He\footnotemark[2] \and Oliver A. Krzysik\footnotemark[2]}
\DeclareMathOperator{\diag}{diag}
\newcommand{\btxt}[1]{#1}
\newcommand{\rtxt}[1]{#1}
\begin{document}

\maketitle

\begin{abstract}
Anderson acceleration (AA) is widely used for accelerating the convergence of nonlinear fixed-point methods $x_{k+1}=q(x_{k})$, $x_k \in \mathbb{R}^n$, \btxt{but little is known about how to quantify the convergence acceleration provided by AA.
As a roadway towards gaining more understanding of convergence acceleration by AA, we study AA($m$), i.e., Anderson acceleration with finite window size $m$, applied to the case of linear fixed-point iterations $x_{k+1}=M x_{k}+b$.
We write AA($m$) as a Krylov method with polynomial residual update formulas, and derive $(m+2)$-term recurrence relations for the AA($m$) polynomials.}
Writing AA($m$) as a Krylov method immediately implies that $k$ iterations of AA($m$) cannot produce a smaller residual than $k$ iterations of GMRES without restart (but without implying anything about the relative convergence speed of (windowed) AA($m$) versus restarted GMRES($m$)).
We find that the AA($m$) residual polynomials observe a periodic memory effect where increasing powers of the error iteration matrix $M$ act on the initial residual as the iteration number increases. We derive several further results based on these polynomial residual update formulas, including orthogonality relations, a lower bound on the AA(1) acceleration coefficient $\beta_k$, and explicit nonlinear recursions for the AA(1) residuals and residual polynomials that do not include the acceleration coefficient $\beta_k$.
\rtxt{Using these recurrence relations we also derive new residual convergence bounds for AA(1) in the linear case, demonstrating how the per-iteration residual reduction $\Vert r_{k+1} \Vert / \Vert r_{k} \Vert$ depends strongly on the residual reduction in the previous iteration and on the angle between the prior residual vectors $r_k$ and $r_{k-1}$.}
We apply these results to study the influence of the initial guess on the asymptotic convergence factor of AA(1), \btxt{and to study AA(1) residual convergence patterns.}
\end{abstract}

\begin{keywords}
  Anderson acceleration, Krylov method, fixed-point method, asymptotic convergence
\end{keywords}

\begin{AMS}
65B05, 
65F10, 
65H10, 
65K10 
\end{AMS}


\section{Introduction}\label{sec:intro}
%

In this paper we consider the following nonlinear  iteration  with window size $m$,
\begin{equation}\label{eq:AA-iteration}
  x_{k+1}= q(x_k) + \sum_{i=1}^{\min(k,m)}\beta_{i}^{(k)}(q(x_k)-q(x_{k-i})), \qquad k=0,1,2,\ldots,
\end{equation}
which aims to accelerate fixed-point (FP) iterations of the type
\begin{equation}\label{eq:fixed-point}
  x_{k+1}=q(x_{k}), \quad x_k\in\mathbb{R}^n,  k=0,1,2,\ldots,
\end{equation}
with fixed point $x^*=q(x^*)$.
The coefficients $\beta_{i}^{(k)}$ in \cref{eq:AA-iteration} are determined by solving an optimization
problem in each step $k$ that minimizes a linearized residual in the new iterate $x_{k+1}$. Method
\cref{eq:AA-iteration} is known as \emph{Anderson acceleration (AA)} \cite{anderson1965iterative}.
Specifically, AA($m$), with window size $m$, solves in every iteration the optimization problem
\begin{equation}\label{eq:Andersonbetas}
	\min_{\{\beta_i^{(k)} \}} \bigg\| r(x_k) + \sum_{i=1}^{\min(k,m)} \beta_i^{(k)} ( r(x_k) - r(x_{k-i})) \bigg\|^2
\end{equation}
with up to $m$ variables. Here, the residuals $r(x)$ of the fixed-point iteration are defined by
\begin{equation}\label{eq:resid}
	r(x)=x-q(x).
\end{equation}

The $2$-norm is normally used in the optimization problem \cref{eq:Andersonbetas}, and throughout this paper $\| \cdot \|$ will denote the 2-norm.  When $m=0$, \cref{eq:AA-iteration} is reduced to the fixed-point iteration \cref{eq:fixed-point}.
\rtxt{Furthermore, the AA($m$) iteration \cref{eq:AA-iteration} we discuss in this paper is the original Anderson acceleration method as proposed in \cite{anderson1965iterative},
which is also the main focus of theoretical work by Walker and Ni in \cite{walker2011anderson} and Toth and Kelley in \cite{toth2015}. As in \cite{walker2011anderson,toth2015}, we set the so-called mixing or relaxation parameter from \cite{anderson1965iterative} equal to one. Note that the AA($m$) iteration \cref{eq:AA-iteration} is different from the Type-I methods in the so-called ``Anderson family'' of methods proposed by Fang and Saad in \cite{fang2009two}; the original Anderson acceleration method \cref{eq:AA-iteration} we discuss here is a Type-II method in the classification of \cite{fang2009two}.
} 

Anderson acceleration dates back to the 1960s \cite{anderson1965iterative} and is widely used in computational science to accelerate iteration methods which converge slowly or do not converge.  It has gained significant new interest over the past decade  both in terms of theoretical developments and applications. Early on, most  research focused on the application of AA without  theoretical convergence analysis. For example, \cite{lott2012accelerated} examines  the effectiveness of AA applied to modified Picard iteration for nonlinear problems arising in variably saturated flow modeling. Similarly, \cite{ho2017accelerating} applies AA to the Uzawa algorithm for the solution of saddle-point problems, 
\rtxt{\cite{henderson2019damped} applies AA to expectation-maximization,} and \cite{ni2009anderson}  investigates the self-consistent field  method accelerated by  AA for electronic structure computations. For more applications, we refer to \cite{an2017anderson,brune2015composing,fang2009two,lipnikov2013anderson}. Anderson acceleration is also closely related to the nonlinear GMRES method from  \cite{oosterlee2000krylov,sterck2012nonlinear,sterck2013steepest}.

\btxt{
AA is often very effective at accelerating convergence, but little is understood about how to quantify or predict the convergence improvements provided by AA. 
Motivated by a desire to improve our understanding of AA convergence acceleration for finite window size $m$, our aim in this paper is to study AA($m$) applied to the linear case, that is, with the iteration function $q(x)$ in (\ref{eq:fixed-point}) given by
\begin{equation}\label{eq:linear-fixed-point}
  q(x) = Mx + b,
\end{equation}
where the fixed point satisfies $Ax^*=b$ with $A =I-M$. We will assume that $A$ is nonsingular and we exclude the trivial case that $A=I$ and $M=0$.

Our approach will be to write AA($m$) in the linear case as a Krylov method, and we will derive recurrence relations for the AA($m$) residual polynomials that, to the best of our knowledge, have not appeared in the literature before. This will allow us to derive several new properties of AA($m$) iterations in the linear case for general $m$, and, for AA(1), new results on residual convergence bounds and on the influence of the initial guess on the asymptotic convergence factor.
}



\subsection{AA($m$) acceleration coefficients}

\btxt{The AA($m$) acceleration coefficients $\beta_i^{(k)}$ in iteration (\ref{eq:AA-iteration}) satisfy the following relations.}
Assume that $k\ge m$. Define $r_k =x_k-q(x_k)$ and
\begin{equation}\label{eq:beta-vector-form}
  \boldsymbol{\beta}^{(k)} =\begin{bmatrix} \beta_1^{(k)} \\ \vdots \\ \beta_m^{(k)} \end{bmatrix}, \quad
  R_k = \begin{bmatrix} r_k-r_{k-1} &  r_k-r_{k-2} & \ldots & r_k-r_{k-m} \end{bmatrix}.
\end{equation}
Then, using the 2-norm in \cref{eq:Andersonbetas}, the solution of the least-squares problem \cref{eq:Andersonbetas} can be written as
\begin{equation}\label{eq:AAm-beta-form}
 \boldsymbol{\beta}^{(k)}  = -(R_k^TR_k)^{-1} R_k^Tr_k,
\end{equation}
if $ R_{k}^TR_k$ is invertible. Otherwise, we can take
\begin{equation}\label{eq:AAm-beta-form-pseudo}
 \boldsymbol{\beta}^{(k)}  = -R_k^{\dag}r_k,
\end{equation}
where $R_k^{\dag}$ is the pseudo-inverse of $R_k$, and $ \boldsymbol{\beta}^{(k)}$ corresponds to the minimum-norm solution of the least-squares problem.  We note that
%
 $R_k^{\dagger}= \big(R_k^TR_k\big)^{\dagger} R_k^T$.

Particularly, when $m=1$ and $r_k\neq r_{k-1}$,
\begin{equation}\label{eq:AA-1-step-beta}
  \beta_1^{(k)} = \frac{-r_k^T(r_k-r_{k-1})}{\|r_k-r_{k-1}\|^2}=:\beta_k.
\end{equation}
When $r_k= r_{k-1}$, we take  $\beta_k=0$ according to \cref{eq:AAm-beta-form-pseudo}.

\btxt{
As $x_k$ approaches $x^*$, it is important to solve the least-squares problem (\ref{eq:Andersonbetas}) in a numerically stable manner, because $R_k$ in (\ref{eq:beta-vector-form}) may become close to rank-deficient. This topic has been discussed extensively in the AA literature, and several options are available.} \rtxt{The normal equations should not be used because this squares the condition number.} \btxt{Two suitable approaches are discussed in \cite{fang2009two}, including using a rank-revealing $QR$ decomposition or a truncated singular value decomposition. Matlab's $QR$ solver or pseudo-inverse solver use similar approaches \cite{moler2004numerical}.
As an alternative, \cite{walker2011anderson} proposes to use a standard $QR$ factor-updating technique that can save some work, combined with an approach that adaptively changes $m$ to drop columns if $R_k$ becomes too ill-conditioned. In the context of parallel implementations, \cite{lockhart2022performance} discusses further versions of $QR$ factor-updating techniques that are optimized to reduce global communication cost using low-synchronization variants of classical and modified Gram-Schmidt.
}

\subsection{AA($m$) convergence}

It is only recently that the first results have been obtained on the convergence  of AA. In \cite{toth2015} it was shown that AA($m$) is
locally $r$-linearly convergent under the assumptions that $q(x)$ is contractive and the AA coefficients remain bounded. However, this work does not prove that AA actually improves the convergence speed. Further progress was made in \cite{evans2020proof}, showing that, to first order, the convergence gain provided by AA in
step $k$ is quantified by a factor $\theta_k \leq  1$ that is the ratio of the square root of the
optimal value defined in \cref{eq:Andersonbetas} to $\| r(x_k)\|_2$. 
However, it is not clear how this result may be used to quantify the asymptotic gain in convergence speed,
since $\theta_k$ does not appear to observe an upper bound $<1$ as $k$ increases.
\rtxt{In \cite{pollock2021anderson} the authors show refined residual bounds that include higher-order terms
and rely on sufficient linear independence in the least-squares problems, which they ensure using a safeguarding strategy.}
In \cite{desterck2020,wang2020}, the authors consider stationary versions of the AA($m$) iteration where constant iteration coefficients $\beta_i$ are chosen in such a way that they minimize the $r$-linear convergence factor of the stationary AA($m$) iteration, given knowledge of $q'(x^*)$.
This provides insight into how the optimal stationary AA($m$) iteration improves the asymptotic convergence of the FP method by reducing the spectral radius of $q'(x^*)$, where $q(x)$ can be interpreted as a nonlinear preconditioner for AA($m$), see also \cite{brune2015composing}.

\rtxt{In the linear case, it has long been known that, in the case of infinite window size $m=\infty$, AA($m$) and the related nonlinear GMRES method of \cite{oosterlee2000krylov} are equivalent in a certain sense to GMRES \cite{saad1986gmres}. In \cite{walker2011anderson} the following precise equivalence result was obtained: if we apply GMRES to a nonsingular linear system $(I-M)x=b$ with initial guess $x_0$ and assume that GMRES residuals strictly decrease, $\|r^G_{k+1}\|<\|r^G_{k}\|$ for all $k$, then the $k+1$st AA iterate with infinite window size, starting from the same initial guess, can be obtained from the $k$th GMRES iterate as $x^{AA}_{k+1}=q(x^G_k) = M x^G_k + b$, which can also be expressed in terms of the error $e_k=x-x_k$ as $e^{AA}_{k+1}=Me^G_k$. 
Clearly, this so-called \emph{essential equivalence} of GMRES and AA for infinite window size, also implies that all the results of GMRES convergence theory directly apply to AA convergence when the window size is infinite, as long as the GMRES residuals do not stagnate. However, when GMRES does stagnate, i.e., $r^G_{k+1}=r^G_{k}\ne0$ for some $k$, the equivalence between AA and GMRES breaks down. In those cases, GMRES would still converge in at most $n$ steps, but AA stalls at an iterate with a nonzero residual for all further iterations \cite{walker2011anderson}. This stagnation behavior of AA with infinite window size was fully characterized in \cite{potra2013characterization}.

However, much less is known about the convergence of AA($m$) applied to linear problems when the window size is finite, which is the case considered in this paper.
Some limited results are known about the convergence of restarted GMRES($m$), but these results cannot be applied to AA($m$) since AA($m$) uses a windowing approach instead of restarting. Just like restarted GMRES($m$), AA($m$) does not have the finite-iteration convergence property of GMRES.
In terms of stagnation behavior, it is easy to see that AA($m$) with finite window size applied to linear problems can never stagnate when $\|M\|<1$, since \cite{toth2015}
shows that the AA($m$) residuals satisfy $\|r_{k+1}\| \le \|M\| \, \|r_k\|$, which implies $\|r_{k+1}\| < \|r_k\|$ when $\|M\|<1$. We are not aware of results on the stagnation behavior of AA($m$) with finite window size when $\|M\|\ge1$. We give an example in this paper where we apply AA(1) to a linear problem with $\|M\|>1$ and obtain $r_2=r_1\ne0$, but subsequent $r_k$ obtained by AA(1) converge to zero.
}


\subsection{AA($m$) in the linear case}

In this paper, we focus on  AA($m$) applied to the linear case, with iteration function (\ref{eq:linear-fixed-point}).
We are interested in exploring polynomial update formulas for the residual of AA($m$), and deriving recurrence relations for the AA($m$) polynomials. Recall that the order-s Krylov subspace generated by a matrix $T$ and a vector $v$ is the linear subspace spanned by the images of $v$ under the first $s$ powers of $T$, that is
\begin{equation*}
  \mathcal{K}_s (T,v) =\Big\{v, Tv, \ldots, T^{s-1} v\Big\}.
\end{equation*}
The specific case of AA(1) in \cref{eq:AA-iteration} reads, for $k\ge 1$,
\begin{equation}\label{eq:anderson-1-step}
  x_{k+1} = (1+\beta_k) q(x_k) -\beta_k q(x_{k-1}).
\end{equation}
In previous work, \cite{kindermann2021optimal,liu2018parametrized,niu2020momentum} have interpreted Nesterov acceleration, which is similar in form to AA(1) but with a prescribed sequence of acceleration coefficients $\beta_k$, as a Krylov method. Inspired by this, we investigate in this paper  how AA($m$) for linear problems, with  $\boldsymbol{\beta}^{(k)}$ given by \cref{eq:AAm-beta-form}, relates to Krylov methods. Following  \cite{kindermann2021optimal,liu2018parametrized,niu2020momentum}, this is easy to see for AA(1) applied to \cref{eq:linear-fixed-point}, as we now explain.  Given $x_0$, let $ x_1=q(x_0)$.  The residual $r_{k+1}$ generated by AA(1) iteration \cref{eq:anderson-1-step} satisfies
\begin{align}
   r_{k+1}&= x_{k+1}-(Mx_{k+1}+b)= Ax_{k+1}-b,\nonumber\\
           & = A\big((1+\beta_k) (Mx_k+b) -\beta_k(Mx_{k-1}+b)\big)-b,\nonumber\\
          &= (1+  \beta_k)AMx_k-  \beta_kAMx_{k-1}+Ab-b,\nonumber \\
         &=(1+  \beta_k)M(Ax_k-b) - \beta_k M(Ax_{k-1}-b)+Mb+Ab-b,\nonumber\\
         &=(1+  \beta_k)Mr_k -  \beta_k M r_{k-1}.\label{pro:AA1-rk+1=Mg_k}
\end{align}

This gives the following expressions for the first few $r_k$:
\begin{align}
 r_1&=M r_0,\nonumber\\
 r_2&=\big((1+\beta_1)M^2-\beta_1 M\big) r_0,\nonumber\\
 r_3&=\big((1+\beta_2)(1+\beta_1)M^3-((1+\beta_2)\beta_1+\beta_2) M^2\big)r_0,\nonumber\\
 r_4&=\big((1+\beta_3)(1+\beta_2)(1+\beta_1)M^4-((1+\beta_3)(1+\beta_2)\beta_1\nonumber\\
  &\quad  +(1+\beta_3)\beta_2+\beta_3(1+\beta_1)) M^3+\beta_3\beta_1M^2\big)r_0. \label{pro:AA1-rks}
\end{align}
Clearly, AA(1) is a Krylov space method.  We will investigate in this paper how this  extends  to AA($m$) and allows to derive new properties of the AA($m$) iteration in the linear case.
%

\btxt{Finally, it is worth noting that, in practice, AA($m$) is almost always used in the form of \cref{eq:AA-iteration}, with a single initial guess $x_0$ that is used as the starting point for accelerating FP method (\ref{eq:fixed-point}) using a window size that gradually increases from 1 to $m$ over the first $m$ steps of the AA($m$) iteration. Note, also, that $x_1=q(x_0)$ in iteration (\ref{eq:AA-iteration}). However, for our theoretical derivations it will sometimes be useful to consider a version of  AA($m$) iteration \cref{eq:AA-iteration} that starts with a general initial guess $\{x_0,x_1,\ldots, x_m\}$ and uses window size $m$ from the first iteration:
\begin{equation}\label{eq:general-guess-AAm}
   x_{k+1}= q(x_k) + \sum_{i=1}^{m}\beta_{i}^{(k)}(q(x_k)-q(x_{k-i})), \qquad k=m,m+1,\ldots.
\end{equation}
We emphasize, however, that we only use this approach with general initial guess $\{x_0,x_1,\ldots, x_m\}$ as an intermediate step in our theoretical derivations for the regular AA($m$) iteration of (\ref{eq:AA-iteration}) with single initial guess $x_0$, and the version with general initial guess is normally not used in practical computer implementations.
}

The rest of this paper is organized as follows. \btxt{In \Cref{sec:AAm-krylov}, we write AA($m$) as a Krylov method and derive polynomial residual update formulas. We derive recurrence relations for the AA($m$) polynomials, and several further results including orthogonality relations.} In \Cref{sec:AA(1)-Krylov}, we focus specifically on AA(1). We obtain  a lower bound on the AA(1) acceleration coefficient and explicit nonlinear recursions for the AA(1) residuals and residual polynomials. \btxt{Using these recurrence relations, we prove results on the influence of the initial guess on the asymptotic convergence factor of AA(1) in the linear case, and we derive new residual convergence bounds for AA(1).
Numerical results are presented in \Cref{sec:Numerical-result} to illustrate the theoretical findings and how they relate to AA($m$) convergence patterns.} Finally, we draw conclusions in \Cref{sec:con}.

\section{AA($m$) as a Krylov space method}\label{sec:AAm-krylov}
In this section, we consider AA($m$) with \rtxt{finite window size} $m>1$ applied to  linear problems and establish links with Krylov methods. We will discuss some specific aspects of the case that $m=1$ in \Cref{sec:AA(1)-Krylov}. It has been known for a long time that AA($m$) is essentially equivalent to GMRES when the window size $m$ is taken as infinite \cite{oosterlee2000krylov,walker2011anderson}. When this is not the case, AA($m$) is a limited-memory version of AA($\infty$) with a moving window, similar to how
the commonly used restarted GMRES($m$) is a memory-economic version of GMRES -- but GMRES($m$) uses restarts rather than a moving window. 
This section establishes  results on AA($m$) with finite $m$ viewed as a Krylov space method.

Assuming $k\ge m$, AA($m$) iteration (\ref{eq:AA-iteration}) can be rewritten as
\begin{equation}\label{eq:AA(m)-rewrite-form}
  x_{k+1}=\Big(1+ \sum_{i=1}^{m}\beta_i^{(k)}\Big)q(x_k)- \sum_{i=1}^{m}\beta_i^{(k)}q(x_{k-i}),\quad k=m,m+1,\ldots\,\,  .
\end{equation}
In the linear case, we  obtain
\begin{equation}\label{eq:xk-matrix-form-AAm}
  x_{k+1}=\Big(1+ \sum_{i=1}^{m}\beta_i^{(k)}\Big)Mx_k- \sum_{i=1}^{m}\beta_i^{(k)}Mx_{k-i}+b,\quad k=m,m+1,\ldots\,\,  .
\end{equation}
This leads to the following update formula for the AA($m$) residuals:
\begin{proposition}\label{prop:rk-recursive-AAm-random-guess}
The residuals $r_{k+1}$ generated by AA($m$) iteration (\ref{eq:AA-iteration}) applied to linear iteration \cref{eq:linear-fixed-point} with $k\ge m$ satisfy
 \begin{equation}\label{eq:m+1-term-rk-linear-random-guess}
   r_{k+1} = \Big(1+ \sum_{i=1}^{m}\beta_i^{(k)}\Big)M r_k- \sum_{i=1}^{m}\beta_i^{(k)} M r_{k-i},\quad k\geq m.
 \end{equation}
\end{proposition}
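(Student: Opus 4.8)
The plan is to mirror the direct computation already carried out for AA(1) in \cref{pro:AA1-rk+1=Mg_k}, now for general window size $m$, and to track carefully the cancellation of the affine term $b$. First I would start from the definition $r_{k+1}=x_{k+1}-q(x_{k+1})=(I-M)x_{k+1}-b=Ax_{k+1}-b$, using $A=I-M$ together with \cref{eq:resid,eq:linear-fixed-point}. Then I would substitute the rewritten linear iteration \cref{eq:xk-matrix-form-AAm} for $x_{k+1}$, which gives $r_{k+1}=A\big((1+\sum_{i=1}^m\beta_i^{(k)})Mx_k-\sum_{i=1}^m\beta_i^{(k)}Mx_{k-i}+b\big)-b$.

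The key algebraic facts I would then exploit are that $A=I-M$ commutes with $M$ (both being polynomials in $M$), so each $AMx_j=M(Ax_j)$, and that $Ax_j=r_j+b$ directly from the definition of the residual. Applying these, the leading term $AMx_k$ becomes $M(r_k+b)=Mr_k+Mb$, each $AMx_{k-i}$ becomes $Mr_{k-i}+Mb$, and the trailing $Ab-b$ equals $-Mb$. Collecting the contributions yields the desired $Mr_k$ and $Mr_{k-i}$ terms, together with a leftover collection of $Mb$-terms.

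The one point worth verifying carefully -- the analogue of the cancellation seen in the AA(1) derivation -- is that all the $Mb$-terms vanish. Their total coefficient is $(1+\sum_{i=1}^m\beta_i^{(k)})-\sum_{i=1}^m\beta_i^{(k)}-1$, which is identically zero, so the affine shift $b$ drops out and the recurrence closes on the residuals alone. This cancellation is not incidental: it reflects that the coefficient multiplying $q(x_k)$ in \cref{eq:AA(m)-rewrite-form} is exactly $1+\sum_{i=1}^m\beta_i^{(k)}$, i.e., the AA($m$) update preserves affine combinations of iterates. After this cancellation, what remains is precisely \cref{eq:m+1-term-rk-linear-random-guess}. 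Since the whole argument is a finite linear computation, I do not anticipate any genuine obstacle; the only care needed is the bookkeeping of the summation index $i$ and confirming the hypothesis $k\ge m$, which guarantees that every index $k-i$ with $1\le i\le m$ satisfies $k-i\ge 0$, so that all iterates $x_{k-i}$ and residuals $r_{k-i}$ appearing in \cref{eq:xk-matrix-form-AAm,eq:m+1-term-rk-linear-random-guess} are well defined.
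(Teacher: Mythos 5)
Your proposal is correct and follows essentially the same route as the paper's proof: both start from the linear form \cref{eq:xk-matrix-form-AAm}, apply $A$, use the commutativity $AM=MA$ to convert $AMx_j$ into $Mr_j$ plus affine terms, and verify that the $b$-dependent terms cancel because the acceleration coefficients sum appropriately. The only cosmetic difference is bookkeeping: the paper computes $Ax_{k+1}$ and observes the right-hand side equals the residual combination plus $b$ (using $Mb+Ab=b$), whereas you subtract $b$ from the outset and track the vanishing total coefficient of $Mb$; these are the same calculation.
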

\begin{proof}
 From \cref{eq:xk-matrix-form-AAm} and $AM=MA$ we have, for $k\geq m$, 
 \begin{align*}
 Ax_{k+1} &= \Big(1+ \sum_{i=1}^{m}\beta_i^{(k)}\Big)AMx_k- \sum_{i=1}^{m}\beta_i^{(k)}AMx_{k-i}+Ab,\\
         &=\Big(1+ \sum_{i=1}^{m}\beta_i^{(k)}\Big)MAx_k- \sum_{i=1}^{m}\beta_i^{(k)}MAx_{k-i}+Ab,\\
         &=\Big(1+ \sum_{i=1}^{m}\beta_i^{(k)}\Big)M(Ax_k-b) - \sum_{i=1}^{m}\beta_i^{(k)}M(Ax_{k-i}-b)+Mb+Ab,\\
         &=\Big(1+ \sum_{i=1}^{m}\beta_i^{(k)}\Big)Mr_k - \sum_{i=1}^{m}\beta_i^{(k)}M r_{k-i}+ b.
 \end{align*}
 Thus, $r_{k+1}=Ax_{k+1}-b=\displaystyle \Big(1+ \sum_{i=1}^{m}\beta_i^{(k)}\Big)Mr_k - \sum_{i=1}^{m}\beta_i^{(k)}Mr_{k-i}$.
\end{proof}

\btxt{Direct calculations then lead from expression (\ref{eq:m+1-term-rk-linear-random-guess}) to the following result, establishing that AA($m$) is a Krylov method in the linear case, that is, $r_{k+1}\in \mathcal{K}_s (M,r_0)$, and deriving ($m+2$)-term recurrence relations for the AA($m$) residual update polynomials:}
\begin{proposition}\label{pro:polynomial-form-AAm-AAj}
AA($m$) iteration \cref{eq:AA-iteration} applied to linear iteration \cref{eq:linear-fixed-point} is a  Krylov method, with the residuals   given by
\begin{equation}\label{eq:rk-polynomial-form-M-AAm-AAj}
    r_{k+1} =p_{k+1}(M)\,r_0,\quad k\geq 0,
\end{equation}
where $p_{k+1}(\lambda)$ is a polynomial of degree at most $k+1$, satisfying  the  following recurrence relations:
\noindent  When  k=0,
\begin{equation*}
p_1(\lambda) =\lambda.
\end{equation*}
When $1\leq k< m$,
\begin{equation}\label{polynomial-sequence-AAj}
 p_{k+1}(\lambda)=\Big(1+\sum_{i=1}^{k}\beta_i^{(k)}\Big)\lambda p_{k}(\lambda) - \sum_{i=1}^{k}\beta_i^{(k)} \lambda p_{k-i}(\lambda) \,\,\,
\text{where}\,\,\, p_0(\lambda)=1.
\end{equation}
When  $k\geq m$,
\begin{equation*}
 p_{k+1}(\lambda)=\Big(1+\sum_{i=1}^{m}\beta_i^{(k)}\Big)\lambda p_{k}(\lambda) - \sum_{i=1}^{m}\beta_i^{(k)} \lambda p_{k-i}(\lambda).
\end{equation*}
Moreover, $p_k(1)=1$ and $p_k(0)= 0, k=1,2,\ldots \,\, .$
\end{proposition}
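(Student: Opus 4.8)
The plan is to prove all of the claims at once by induction on $k$, turning each residual update into a polynomial identity in $M$ applied to the fixed vector $r_0$. The base case is immediate: since $x_1 = q(x_0)$ in the linear setting, the same computation as in the derivation preceding \cref{pro:AA1-rk+1=Mg_k} (using $AM=MA$ and $Mb+Ab=b$) gives $r_1 = M r_0$, so the choice $p_1(\lambda)=\lambda$ yields $r_1 = p_1(M)\,r_0$ with $\deg p_1 = 1$, $p_1(1)=1$, and $p_1(0)=0$. This anchors the induction and also fixes the hypotheses to be propagated, namely that $r_j = p_j(M)\,r_0$ with $\deg p_j \le j$, $p_j(1)=1$, and $p_j(0)=0$ for all $j \le k$.

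For the inductive step I would split into the two regimes appearing in the statement. When $k \ge m$, I substitute $r_j = p_j(M)\,r_0$ into the residual recurrence \cref{eq:m+1-term-rk-linear-random-guess} of \Cref{prop:rk-recursive-AAm-random-guess} and factor out $r_0$ on the right; the bracketed polynomial is then exactly the stated $(m+2)$-term recurrence for $p_{k+1}$. The growing-window regime $1 \le k < m$ is the one place that needs separate attention, because \Cref{prop:rk-recursive-AAm-random-guess} is only asserted for $k \ge m$. Here I would simply rerun the computation of that proposition verbatim, but with the window $\min(k,m)=k$ from \cref{eq:AA-iteration}, obtaining $r_{k+1} = \big(1+\sum_{i=1}^{k}\beta_i^{(k)}\big) M r_k - \sum_{i=1}^{k}\beta_i^{(k)} M r_{k-i}$; substituting the inductive hypothesis then produces precisely \cref{polynomial-sequence-AAj}. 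The degree bound propagates with no extra work: the unique highest-degree contribution is the term $\lambda p_k(\lambda)$ of degree at most $k+1$, while every remaining term $\lambda p_{k-i}(\lambda)$ has degree at most $k+1-i \le k$.

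It remains to verify the two normalizations, and this is where the affine structure of AA does the work. Evaluating either recurrence at $\lambda=0$ annihilates every summand, since each carries an explicit factor $\lambda$; hence $p_{k+1}(0)=0$. Evaluating at $\lambda=1$ and using $p_j(1)=1$ for $j\le k$ gives
\[
  p_{k+1}(1) = \Big(1+\sum_{i}\beta_i^{(k)}\Big)\cdot 1 \;-\; \sum_{i}\beta_i^{(k)}\cdot 1 \;=\; 1,
\]
the sums telescoping because the coefficient $1+\sum_i\beta_i^{(k)}$ attached to $p_k$ exceeds the total weight $\sum_i\beta_i^{(k)}$ carried by the remaining terms by exactly one. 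This is really a restatement of the fact that the AA update \cref{eq:AA-iteration} is an affine combination of the $q(x_{k-i})$ with coefficients summing to one, so the value at the fixed point, i.e.\ at $\lambda=1$, is preserved.

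As for difficulty, I do not anticipate a genuine obstacle: the whole argument is a routine induction once \Cref{prop:rk-recursive-AAm-random-guess} is available. The only care required is bookkeeping across the two regimes $1\le k<m$ and $k\ge m$ — in particular re-deriving the residual recurrence for the growing window, since the cited proposition covers only $k\ge m$ — together with checking that the degree and normalization inductions invoke the cases $p_{k-i}$ over the correct index ranges.
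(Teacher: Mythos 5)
Your proof is correct: the base case $r_1=Mr_0$, the substitution of $r_j=p_j(M)\,r_0$ into the residual recurrence, the degree bookkeeping, and the evaluations at $\lambda=0$ and $\lambda=1$ all go through exactly as you describe, and you correctly spotted that \cref{prop:rk-recursive-AAm-random-guess} covers only $k\geq m$, so the growing-window recurrence with $\min(k,m)=k$ must be re-derived separately (the same $AM=MA$, $Mb+Ab=b$ computation works verbatim). However, your route is organized differently from the paper's. The paper does not run a single direct induction on the standard iteration \cref{eq:AA-iteration}; instead it defers to \cref{app:AAm-random-guesses}, where it first proves \cref{pro:polynomial-form-AAm-random-guess}: for the fixed-window iteration \cref{eq:general-guess-AAm} with a \emph{general} initial guess $\{x_0,\ldots,x_m\}$, the residual decomposes as $r_{k+1}=\sum_{j=0}^m p_{k-m+1,j}(M)\,r_j$, a sum of vectors lying in the $m+1$ Krylov spaces generated by the initial residuals (a ``multi-Krylov'' result), with recurrences for the polynomials $p_{k-m+1,j}$. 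The single-guess statement \cref{pro:polynomial-form-AAm-AAj} is then obtained by specialization, using that for the standard iteration each $r_j$, $j=1,\ldots,m$, is itself a polynomial in $M$ applied to $r_0$. What the paper's route buys is the more general appendix result, which has independent interest; what it glosses over (``easily follows'') is precisely the growing-window induction that you carry out explicitly, since expressing $r_1,\ldots,r_m$ as polynomials in $M$ times $r_0$ requires walking through the steps with window sizes $1,2,\ldots,m-1$. Your version is therefore more elementary and self-contained, at the cost of not producing the general-initial-guess decomposition; the underlying algebra --- substituting into \cref{eq:m+1-term-rk-linear-random-guess} and reading off the $(m+2)$-term recurrence, then checking $p_{k+1}(0)=0$ from the explicit factor $\lambda$ and $p_{k+1}(1)=1$ from the affine combination of coefficients --- is the same in both.
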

\btxt{
\begin{proof}
As explained in \cref{app:AAm-random-guesses}, the result follows from direct calculations using (\ref{eq:m+1-term-rk-linear-random-guess}) as a starting point. To show this, \cref{app:AAm-random-guesses} first establishes a result analogous to \cref{pro:polynomial-form-AAm-AAj} for the more general case of iteration (\ref{eq:general-guess-AAm}) with general initial guess $\{x_0,x_1,\ldots, x_m\}$, in \cref{pro:polynomial-form-AAm-random-guess}. The result of \cref{pro:polynomial-form-AAm-AAj} then easily follows from \cref{pro:polynomial-form-AAm-random-guess}.
\end{proof}
}

Using \cref{pro:polynomial-form-AAm-AAj}, it can be shown easily that there is a periodic pattern with  period $m+1$ in the AA($m$) polynomials:
\begin{proposition}\label{pro:AAm-polynomial-power-initial-AAj}
The residuals of AA($m$) iteration \cref{eq:AA-iteration} applied to linear iteration \cref{eq:linear-fixed-point} satisfy
\begin{equation}\label{eq:period}
r_{s(m+1)+i}=M^{s+1} g_{s(m+1)+i-(s+1)}(M)\,r_0, \quad s=0,1,2,\ldots, \quad i =1,\ldots, m+1,
\end{equation}
where $g_{s(m+1)+i-(s+1)}(\lambda)$ is a polynomial of degree at most $s(m+1)+i-(s+1)$ and $\lambda^{s+1} g_{s(m+1)+i-(s+1)}(\lambda)=p_{s(m+1)+i}(\lambda)$ from \cref{eq:rk-polynomial-form-M-AAm-AAj}.
\end{proposition}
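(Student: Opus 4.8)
The plan is to reduce the periodicity statement to a claim about the \emph{order of vanishing at the origin} of the residual polynomials $p_k$ from \cref{pro:polynomial-form-AAm-AAj}. Writing $\nu_k$ for the multiplicity of $\lambda=0$ as a root of $p_k$ (equivalently, the largest power of $\lambda$ dividing $p_k$; this is finite since $p_k(1)=1$, so $p_k\not\equiv 0$), the asserted factorization $p_{s(m+1)+i}(\lambda)=\lambda^{s+1}g_{s(m+1)+i-(s+1)}(\lambda)$ is exactly the divisibility statement $\nu_{s(m+1)+i}\ge s+1$. Since every index $k$ is uniquely $k=s(m+1)+i$ with $1\le i\le m+1$, and then $\lceil k/(m+1)\rceil=s+1$, the whole proposition is equivalent to the single inequality
\[
\nu_k\ \ge\ \lceil k/(m+1)\rceil,\qquad k=1,2,\ldots.
\]
The degree bound $\deg g_{s(m+1)+i-(s+1)}\le s(m+1)+i-(s+1)$ then follows at once from $\deg p_{s(m+1)+i}\le s(m+1)+i$, and the residual formula $r_{s(m+1)+i}=p_{s(m+1)+i}(M)\,r_0=M^{s+1}g_{s(m+1)+i-(s+1)}(M)\,r_0$ is just \cref{eq:rk-polynomial-form-M-AAm-AAj}.

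Next I would extract from the recurrence relations of \cref{pro:polynomial-form-AAm-AAj} the single estimate that drives everything. For $k\ge m$ each polynomial on the right-hand side carries an explicit factor $\lambda$, so since multiplying by $\lambda$ raises the order of vanishing by exactly one and the order of a sum is at least the minimum of the orders of its summands,
\[
\nu_{k+1}\ \ge\ 1+\min_{0\le i\le m}\nu_{k-i}.
\]
I would then prove $\nu_k\ge\lceil k/(m+1)\rceil$ by strong induction on $k$. For the base cases $k=1,\ldots,m+1$ one has $\lceil k/(m+1)\rceil=1$ and $\nu_k\ge 1$ because $p_k(0)=0$ for every $k\ge1$ by \cref{pro:polynomial-form-AAm-AAj}. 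For the inductive step with $k\ge m+1$, the minimum above is attained at the oldest index $k-m$ (the ceiling is nondecreasing and $k-m$ is the smallest argument among $k-m,\ldots,k$), so the inductive hypothesis gives $\nu_{k+1}\ge 1+\lceil (k-m)/(m+1)\rceil$.

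The crux — and essentially the only thing to verify — is that this closes the induction exactly, which rests on the elementary identity $\lceil (x+(m+1))/(m+1)\rceil=\lceil x/(m+1)\rceil+1$. Applying it with $x=k-m$ yields $1+\lceil (k-m)/(m+1)\rceil=\lceil (k+1)/(m+1)\rceil$, so $\nu_{k+1}\ge\lceil (k+1)/(m+1)\rceil$ and the induction is complete. I do not expect any genuine obstacle: the entire content is the bookkeeping observation that the recurrence advances by one factor of $\lambda$ at each step while reaching back only $m+1$ terms, so the guaranteed power of $\lambda$ accumulates exactly one extra unit per full period of $m+1$ iterations, which is precisely what the ceiling function records. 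The only points to state with care are that the window-$m$ recurrence (valid for $k\ge m$) is the one used in the inductive step and that its reference indices $k-m,\ldots,k$ remain $\ge 1$ there, both of which hold once $k\ge m+1$.
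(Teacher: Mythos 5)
Your proof is correct and is essentially the argument the paper intends: the paper gives no explicit proof of this proposition, asserting only that it "can be shown easily" from \cref{pro:polynomial-form-AAm-AAj}, and your formalization---tracking the order of vanishing $\nu_k$ of $p_k$ at $\lambda=0$ through the recurrence, with base case $\nu_k\ge 1$ from $p_k(0)=0$, the window estimate $\nu_{k+1}\ge 1+\min_{0\le i\le m}\nu_{k-i}$, and the ceiling identity closing the strong induction to give $\nu_k\ge\lceil k/(m+1)\rceil$---is exactly that bookkeeping made rigorous. The degree bound and the residual factorization then follow from \cref{eq:rk-polynomial-form-M-AAm-AAj} just as you say.
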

Expression \cref{eq:period} indicates that every $m+1$ iterations, the power of $M$ in the right-hand side of \cref{eq:period} increases by 1. 
We refer to this property as the AA($m$) iterations possessing a periodic \emph{memory effect}. 
Expression \cref{eq:period}, thus, reveals that AA($m$) provides acceleration as a result of two multiplicative effects: damping of error modes by $M^{s}$ as in the FP method, augmented by polynomial acceleration. 
As a result of the windowing in AA($m$), the effect of the FP iteration is, thus, retained in an accumulative fashion as $k$ increases.
This is in contrast to restarted GMRES($m$), where there is no such cumulative damping since the iteration is fully restarted every $m$ steps.

Next, the following result follows directly from \cref{eq:rk-polynomial-form-M-AAm-AAj} in \cref{pro:polynomial-form-AAm-AAj}, since GMRES determines the optimal degree-$k$ polynomial with $p_k(1)=1$.
\begin{proposition}\label{pro:k-AAm-worse-GMRES}
When applied to linear iteration \cref{eq:linear-fixed-point},
$k$ steps of AA($m$) cannot produce a residual that is smaller in the 2-norm than the residual obtained by GMRES($k$) applied to
the corresponding linear system.
\end{proposition}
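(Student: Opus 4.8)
The plan is to read the statement off directly from the polynomial residual representation of \cref{pro:polynomial-form-AAm-AAj} and match it against the optimality property of GMRES. The only fact I need is that, after $k$ steps of AA($m$) started from $x_0$ (with $x_1=q(x_0)$), the residual can be written as $r_k=p_k(M)\,r_0$ where $p_k$ has degree at most $k$ and is normalized by $p_k(1)=1$; both statements are already contained in \cref{pro:polynomial-form-AAm-AAj}. The whole argument then reduces to the observation that this one particular residual polynomial lies inside precisely the feasible set over which GMRES minimizes, so GMRES cannot do worse.

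First I would recall the standard residual-minimization characterization of GMRES. Applying GMRES to the linear system $Ax=b$ with $A=I-M$ and the same initial guess $x_0$, the residual norm after $k$ iterations (equivalently, after one cycle of GMRES with restart parameter $k$, so that ``GMRES($k$)'' coincides with $k$ steps of unrestarted GMRES) is
\[
  \|r_k^G\| = \min_{q}\, \|q(A)\,r_0^G\|,
\]
where the minimum is taken over all polynomials $q$ of degree at most $k$ with $q(0)=1$ and $r_0^G=b-Ax_0$. Note that both methods are built on the same Krylov space, since $\mathcal{K}_k(A,r_0)=\mathcal{K}_k(M,r_0)$, and on the same starting residual, since $r_0^G=b-Ax_0=-(Ax_0-b)=-r_0$, so that $\|r_0^G\|=\|r_0\|$.

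The key step, and the only one requiring a small calculation, is to align the two polynomial pictures. Because $M=I-A$, the change of variable $\lambda\mapsto 1-\lambda$ is a degree-preserving bijection between polynomials in $M$ and polynomials in $A$: writing $q(\lambda)=p(1-\lambda)$ gives $q(A)=p(M)$, and the normalization $q(0)=1$ becomes exactly $p(1)=1$. Hence GMRES minimizes $\|p(M)\,r_0^G\|$ over precisely the set of degree-$\le k$ polynomials $p$ with $p(1)=1$, which is exactly the feasible set containing the AA($m$) residual polynomial $p_k$. Optimality then closes the argument: since $p_k$ is an admissible choice in the GMRES minimization and the two methods act on the same initial residual up to sign, $\|r_k^G\|=\min_q\|q(A)\,r_0^G\|\le\|p_k(M)\,r_0\|=\|r_k\|$.

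I do not expect a genuine obstacle here; the statement is an immediate corollary of \cref{pro:polynomial-form-AAm-AAj}. The only point needing care is the bookkeeping of the substitution $\lambda\mapsto 1-\lambda$, which must be verified to preserve polynomial degree and to map the GMRES normalization point $0$ to the AA normalization point $1$, together with the identification of the common Krylov space and starting residual. I would also remark in passing that the additional AA($m$) constraint $p_k(0)=0$ from \cref{pro:polynomial-form-AAm-AAj} makes the AA feasible set a proper subset of the GMRES one, which is fully consistent with, but not needed for, the inequality.
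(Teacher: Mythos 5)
Your proposal is correct and is essentially the paper's own argument: the paper likewise deduces the result directly from \cref{pro:polynomial-form-AAm-AAj}, noting that GMRES finds the optimal degree-$k$ polynomial normalized by $p_k(1)=1$, so the AA($m$) residual polynomial is just one admissible candidate in that minimization. Your additional bookkeeping (the substitution $\lambda\mapsto 1-\lambda$, the sign relation $r_0^G=-r_0$, and the remark that $p_k(0)=0$ makes the AA feasible set a proper subset) simply fills in details the paper leaves implicit.
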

\begin{remark}
\cref{pro:k-AAm-worse-GMRES} does not imply anything about the relative convergence speed of (windowed) AA($m$) versus restarted GMRES($m$) in the linear case.  To our knowledge, no general theoretical results exist on this topic. Note that it can be expected that AA($m$) would often converge faster than GMRES($m$), since AA($m$) uses $m+1$  previous iterates in each step, while GMRES($m$) uses about half that number per step on average. Some numerical results in the literature confirm this, see, e.g., Figure 9 in \cite{LinearacAA}.
\end{remark}

Finally, we show an orthogonality property of the AA($m$) residual $r_{k+1}$ with respect to $R_k$ of \cref{eq:beta-vector-form}:
\begin{proposition}\label{AAm-rk-M-orthogonal}
 Assume that $M$ is invertible. Then the residuals $r_{k+1}$ generated by AA($m$) iteration \cref{eq:AA-iteration} applied to linear iteration \cref{eq:linear-fixed-point} with $k \ge m$ satisfy
\begin{equation*}
  R_k^T M^{-1}r_{k+1}=0,
\end{equation*}
where $R_k$ is defined in \cref{eq:beta-vector-form}.
\end{proposition}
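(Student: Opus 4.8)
The plan is to reduce the orthogonality claim to the normal equations of the least-squares problem \cref{eq:Andersonbetas} that defines $\boldsymbol{\beta}^{(k)}$. First I would take the residual recurrence \cref{eq:m+1-term-rk-linear-random-guess} from \cref{prop:rk-recursive-AAm-random-guess} and, using the assumption that $M$ is invertible, multiply through by $M^{-1}$ to obtain
\[
  M^{-1} r_{k+1} = \Big(1 + \sum_{i=1}^{m}\beta_i^{(k)}\Big) r_k - \sum_{i=1}^{m}\beta_i^{(k)} r_{k-i}.
\]
Regrouping the right-hand side as $r_k + \sum_{i=1}^{m}\beta_i^{(k)}(r_k - r_{k-i})$ and recalling the definition of $R_k$ in \cref{eq:beta-vector-form}, this reads simply
\[
  M^{-1} r_{k+1} = r_k + R_k \boldsymbol{\beta}^{(k)}.
\]
This is exactly where the factor $M^{-1}$ in the statement originates, and it is the only place the invertibility of $M$ is used.

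The key observation is then that $r_k + R_k \boldsymbol{\beta}^{(k)}$ is precisely the vector whose $2$-norm is minimized in \cref{eq:Andersonbetas}, evaluated at the optimal coefficient vector $\boldsymbol{\beta}^{(k)}$. By the standard optimality (normal-equation) characterization of a least-squares solution, the minimizing residual is orthogonal to the column space of $R_k$, i.e.\ $R_k^T (r_k + R_k \boldsymbol{\beta}^{(k)}) = 0$. Substituting the identity above yields $R_k^T M^{-1} r_{k+1} = 0$. To make this airtight I would verify the equivalent algebraic identity $R_k^T R_k \boldsymbol{\beta}^{(k)} = -R_k^T r_k$ directly from the two cases \cref{eq:AAm-beta-form} and \cref{eq:AAm-beta-form-pseudo}.

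When $R_k^T R_k$ is invertible, the identity is immediate from $\boldsymbol{\beta}^{(k)} = -(R_k^T R_k)^{-1} R_k^T r_k$. In the rank-deficient case one uses $\boldsymbol{\beta}^{(k)} = -(R_k^T R_k)^{\dagger} R_k^T r_k$, so that $R_k^T R_k \boldsymbol{\beta}^{(k)} = -(R_k^T R_k)(R_k^T R_k)^{\dagger} R_k^T r_k$; since $(R_k^T R_k)(R_k^T R_k)^{\dagger}$ is the orthogonal projector onto the range of $R_k^T R_k$, which coincides with the range of $R_k^T$, and $R_k^T r_k$ already lies in that range, the projector acts as the identity and again gives $R_k^T R_k \boldsymbol{\beta}^{(k)} = -R_k^T r_k$. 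In either case $R_k^T(r_k + R_k \boldsymbol{\beta}^{(k)}) = R_k^T r_k + R_k^T R_k \boldsymbol{\beta}^{(k)} = 0$, completing the argument. There is no deep obstacle here; the only point requiring care is confirming that the minimum-norm pseudo-inverse solution still satisfies the normal equations, which it does because it is, in particular, a least-squares solution of \cref{eq:Andersonbetas}.
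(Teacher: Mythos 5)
Your proof is correct and takes essentially the same route as the paper's: both combine the normal-equation identity $R_k^T\bigl(r_k + R_k\boldsymbol{\beta}^{(k)}\bigr)=0$ with the residual recurrence \cref{eq:m+1-term-rk-linear-random-guess} to conclude $R_k^T M^{-1} r_{k+1}=0$. Your explicit verification that the minimum-norm pseudo-inverse solution \cref{eq:AAm-beta-form-pseudo} still satisfies the normal equations is a detail the paper's proof asserts without justification, so that addition is welcome but does not change the argument.
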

\begin{proof}
Recall $\boldsymbol{\beta}^{(k)}$ in \cref{eq:AAm-beta-form-pseudo}. We have
\begin{equation*}
 (R_k^TR_k)\boldsymbol{\beta}^{(k)} + R_k^Tr_k=0,
\end{equation*}
which leads to
\begin{equation*}
  R_k^T(R_k\boldsymbol{\beta}^{(k)}+r_k)=0,
\end{equation*}
or, by \cref{eq:beta-vector-form},
\begin{equation}\label{eq:rk-Rk-dot-product}
 R_k^T \left( \Big(1+ \sum_{i=1}^{m}\beta_i^{(k)}\Big)r_k - \sum_{i=1}^{m}\beta_i^{(k)}r_{k-i}\right )=0.
\end{equation}
Using \cref{prop:rk-recursive-AAm-random-guess} this gives
\begin{equation*}
 R_k^T  M^{-1}M\left(\Big(1+ \sum_{i=1}^{m}\beta_i^{(k)}\Big)r_k - \sum_{i=1}^{m}\beta_i^{(k)}r_{k-i}\right)=R_k^T M^{-1} r_{k+1}=0,
\end{equation*}
which completes the proof.
\end{proof}

The previous result is used to show how the residuals $r_{k+1}$ are related to the projector operator $R_kR_k^{\dag}$, as expected in the context of least-squares problem \cref{eq:Andersonbetas}:
\begin{proposition}\label{pros-AAm-residual-A-random}
The residuals $r_{k+1}$ generated by AA($m$) iteration \cref{eq:AA-iteration} applied to linear iteration \cref{eq:linear-fixed-point} with $k \ge m$ satisfy the following:

If $R_k^T R_k$ is invertible, then
\begin{equation*}
  r_{k+1} =M\left(I-R_k(R_k^T R_k)^{-1}R_k^T\right)r_k, \, k\geq m.
\end{equation*}
More generally,
\begin{equation}\label{eq:prop-rk-based-on-projection}
  r_{k+1} =M(I-R_kR_k^{\dag})r_k, \, k\geq m.
\end{equation}
Furthermore,
\begin{equation}\label{eq:prop-orthognal-Rk}
  R_k^T(I-R_kR_k^{\dag})r_k =0, \, k\geq m.
\end{equation}

\end{proposition}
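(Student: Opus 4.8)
The plan is to begin from the residual recurrence in \cref{prop:rk-recursive-AAm-random-guess} and recast the linear combination of residuals appearing there in terms of the data matrix $R_k$ and coefficient vector $\boldsymbol{\beta}^{(k)}$ of \cref{eq:beta-vector-form}. Factoring $M$ out of \cref{eq:m+1-term-rk-linear-random-guess} gives $r_{k+1} = M\big[(1+\sum_{i=1}^{m}\beta_i^{(k)})\,r_k - \sum_{i=1}^{m}\beta_i^{(k)}\,r_{k-i}\big]$, and the key observation is that the bracketed expression equals $r_k + R_k\boldsymbol{\beta}^{(k)}$, since by definition $R_k\boldsymbol{\beta}^{(k)} = \sum_{i=1}^{m}\beta_i^{(k)}(r_k - r_{k-i})$. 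Hence $r_{k+1} = M(r_k + R_k\boldsymbol{\beta}^{(k)})$ for $k\geq m$, and all three claims then follow by inserting the appropriate expression for $\boldsymbol{\beta}^{(k)}$.

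For the first claim I would substitute the normal-equations solution $\boldsymbol{\beta}^{(k)} = -(R_k^T R_k)^{-1}R_k^T r_k$ from \cref{eq:AAm-beta-form}, valid when $R_k^T R_k$ is invertible, which immediately yields $r_k + R_k\boldsymbol{\beta}^{(k)} = (I - R_k(R_k^T R_k)^{-1}R_k^T)r_k$ and hence the stated formula. For the general claim I would instead use the minimum-norm solution $\boldsymbol{\beta}^{(k)} = -R_k^{\dagger} r_k$ from \cref{eq:AAm-beta-form-pseudo}, giving $r_k + R_k\boldsymbol{\beta}^{(k)} = (I - R_k R_k^{\dagger})r_k$ and therefore $r_{k+1} = M(I - R_k R_k^{\dagger})r_k$.

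For the orthogonality relation \cref{eq:prop-orthognal-Rk} I would appeal to the Moore--Penrose identities for $R_k^{\dagger}$. Since $R_k R_k^{\dagger}$ is symmetric and satisfies $R_k R_k^{\dagger} R_k = R_k$, transposing the latter gives $R_k^T R_k R_k^{\dagger} = R_k^T$, so that $R_k^T(I - R_k R_k^{\dagger}) = R_k^T - R_k^T = 0$; multiplying on the right by $r_k$ yields \cref{eq:prop-orthognal-Rk}. Equivalently, one can note that $I - R_k R_k^{\dagger}$ is the orthogonal projector onto the orthogonal complement of the column space of $R_k$, so its image is annihilated by $R_k^T$; this also recovers the relation already obtained as \cref{eq:rk-Rk-dot-product} in the proof of \cref{AAm-rk-M-orthogonal}.

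None of these steps presents a genuine obstacle. The only point requiring care is the bookkeeping in identifying the bracketed residual combination with $r_k + R_k\boldsymbol{\beta}^{(k)}$, and using this identity consistently with both the inverse and pseudo-inverse forms of $\boldsymbol{\beta}^{(k)}$; once this is in place, the three statements follow from standard projector algebra.
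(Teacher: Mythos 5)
Your proposal is correct, and for the two projection formulas it is essentially the paper's own proof: the paper likewise factors $M$ out of \cref{eq:m+1-term-rk-linear-random-guess}, identifies the bracketed combination as $r_k + R_k\boldsymbol{\beta}^{(k)}$, and substitutes $\boldsymbol{\beta}^{(k)} = -R_k^{\dag}r_k$ from \cref{eq:AAm-beta-form-pseudo} to obtain $r_{k+1} = M(I-R_kR_k^{\dag})r_k$, with the invertible case as the special instance \cref{eq:AAm-beta-form}.

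Where you diverge is the orthogonality relation \cref{eq:prop-orthognal-Rk}. The paper obtains it by combining \cref{eq:prop-rk-based-on-projection} with \cref{AAm-rk-M-orthogonal}, i.e., it writes $R_k^T M^{-1} r_{k+1} = R_k^T(I-R_kR_k^{\dag})r_k = 0$; note that this route formally leans on the hypothesis of \cref{AAm-rk-M-orthogonal} that $M$ is invertible (the essential content, relation \cref{eq:rk-Rk-dot-product}, is derived there from the normal equations and does not actually need it, but the citation as stated carries that assumption). Your argument instead proves \cref{eq:prop-orthognal-Rk} directly from the Moore--Penrose identities: symmetry of $R_kR_k^{\dag}$ together with $R_kR_k^{\dag}R_k = R_k$ gives $R_k^T R_k R_k^{\dag} = R_k^T$, hence $R_k^T(I - R_kR_k^{\dag}) = 0$ as a matrix identity. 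This is cleaner in two respects: it makes no reference to $M$ at all (so no invertibility assumption is even implicitly invoked), and it yields the stronger statement that $I - R_kR_k^{\dag}$ is annihilated by $R_k^T$ on the whole space, not merely when applied to $r_k$. The paper's route, on the other hand, has the expository virtue of tying the orthogonality property to the previously established residual orthogonality $R_k^T M^{-1} r_{k+1} = 0$, making the connection between the two propositions explicit.
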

\begin{proof}
From \cref{eq:AAm-beta-form-pseudo} and \cref{eq:m+1-term-rk-linear-random-guess} we have
 \begin{align*}
 r_{k+1}  & = M r_k + \sum_{i=1}^{m}\beta_i^{(k)} M (r_k-r_{k-i}),\\
    &= Mr_k + MR_k \boldsymbol{\beta}^{(k)},  \\
   & =  M(I-R_kR_k^{\dag})r_k.
 \end{align*}
Expression \cref{eq:prop-orthognal-Rk} is then obtained from \cref{eq:prop-rk-based-on-projection} using \cref{AAm-rk-M-orthogonal}.
\end{proof}

\section{Specific results for AA(1)}\label{sec:AA(1)-Krylov}
%
\btxt{In this section, we apply to AA(1) the recurrence relations and orthogonality properties that were derived in \Cref{sec:AAm-krylov} for AA($m$) viewed as a Krylov method, for the linear case with iteration function given by (\ref{eq:linear-fixed-point}).
We first derive explicit nonlinear recurrence relations for the AA(1) residuals and residual update polynomial that no longer depend on the $\beta_k$, and a bound on the acceleration coefficients $\beta_k$. We then derive new convergence bounds on the AA(1) residuals. We finally prove results on the invariance of the asymptotic convergence factor under scaling of the initial condition, and on finite convergence for eigenvector initial conditions. The relevance of these new theoretical properties for understanding AA(1) convergence is briefly discussed here and will be illustrated in numerical tests in \Cref{sec:Numerical-result}.}

\subsection{Nonlinear recurrence relations for AA(1) and bound on $\beta_k$}
\btxt{The result of \cref{pro:polynomial-form-AAm-AAj} shows that AA($m$) is a Krylov method in the linear case, where the coefficients of the residual update polynomials depend on the acceleration coefficients $\beta_i^{(k)}$, which in turn depend on the residuals. To derive further properties for AA(1), it will be useful to formulate recurrence formulas that are nonlinear in the residuals, with the $\beta_k$ eliminated.
The following result establishes an explicit nonlinear three-term recurrence for the AA(1) residuals that does not include $\beta_k$.}
\begin{proposition}\label{AA1-rk-simple-form-randomx0x1}
The residuals $r_{k+1}$ generated by AA($m$) iteration \cref{eq:AA-iteration} with $m=1$ applied to linear iteration \cref{eq:linear-fixed-point} satisfy
\begin{itemize}
\item if $r_k\neq r_{k-1}$,
\begin{equation*}
  r_{k+1} =\frac{1}{(r_k-r_{k-1})^T(r_k-r_{k-1})} M\left(-r_k r_{k-1}^T + (r_k r_{k-1}^T)^T\right)(r_k-r_{k-1}), \, k\geq 1.
\end{equation*}

\item if $r_k=r_{k-1}$, $r_{k+1}=Mr_k$.
\end{itemize}
\end{proposition}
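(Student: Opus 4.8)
The plan is to substitute the explicit AA(1) coefficient formula \cref{eq:AA-1-step-beta} directly into the AA(1) residual recurrence \cref{pro:AA1-rk+1=Mg_k} and then reduce the resulting expression to the claimed skew-symmetric rank-two form by elementary manipulations involving only scalar inner products. I would begin by disposing of the degenerate case: when $r_k = r_{k-1}$, the convention stated just after \cref{eq:AA-1-step-beta} sets $\beta_k = 0$, and then \cref{pro:AA1-rk+1=Mg_k} immediately yields $r_{k+1} = M r_k$, which is precisely the second bullet. This requires no further work.

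For the main case $r_k \neq r_{k-1}$, I would first rewrite \cref{pro:AA1-rk+1=Mg_k} as $r_{k+1} = M r_k + \beta_k\, M (r_k - r_{k-1})$, so that $\beta_k$ multiplies only the difference $r_k - r_{k-1}$. Abbreviating $d := r_k - r_{k-1}$ and inserting $\beta_k = -r_k^T d / (d^T d)$ from \cref{eq:AA-1-step-beta}, this becomes $r_{k+1} = M\big(r_k - \tfrac{r_k^T d}{d^T d}\, d\big)$, where $r_k^T d$ is a scalar. Multiplying inside the bracket by $d^T d$, the entire proposition reduces to verifying the purely algebraic vector identity $(d^T d)\, r_k - (r_k^T d)\, d = \big(-r_k r_{k-1}^T + r_{k-1} r_k^T\big)\, d$, since $(r_k r_{k-1}^T)^T = r_{k-1} r_k^T$; dividing back by $d^T d$ and applying $M$ then recovers the first bullet.

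To establish this identity I would expand both sides as linear combinations of $r_k$ and $r_{k-1}$ using the three scalars $r_k^T r_k$, $r_{k-1}^T r_{k-1}$, and $r_k^T r_{k-1}$, and check that the coefficient of $r_k$ and the coefficient of $r_{k-1}$ agree on each side. Both sides collapse to $(r_{k-1}^T r_{k-1} - r_k^T r_{k-1})\, r_k + (r_k^T r_k - r_k^T r_{k-1})\, r_{k-1}$, so the identity holds termwise. The only genuine subtlety here, and the step I would flag as the main obstacle, is recognizing in advance that the scalar-projection term $\tfrac{r_k^T d}{d^T d}\, d$ can be re-encoded as the skew-symmetric rank-two matrix $-r_k r_{k-1}^T + (r_k r_{k-1}^T)^T$ acting on $d$; once this target form is anticipated, the remaining verification is routine inner-product bookkeeping with no analytic content.
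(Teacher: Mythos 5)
Your proof is correct and takes essentially the same route as the paper's: substitute the explicit coefficient formula \cref{eq:AA-1-step-beta} into the recurrence \cref{pro:AA1-rk+1=Mg_k}, handle the degenerate case $r_k = r_{k-1}$ via the convention $\beta_k = 0$, and reduce the main case to the skew-symmetric outer-product form by elementary inner-product algebra. The only cosmetic difference is that the paper first rewrites $1+\beta_k = -r_{k-1}^T(r_k-r_{k-1})/\|r_k-r_{k-1}\|^2$ so that the rank-one matrices $-r_k r_{k-1}^T$ and $r_{k-1}r_k^T$ appear immediately from the two coefficients, whereas you keep the projection form $M\bigl(r_k - \tfrac{r_k^T d}{d^T d}\,d\bigr)$ and verify the equivalent vector identity by matching coefficients of $r_k$ and $r_{k-1}$.
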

\begin{proof}
We first consider the case that $r_k\neq r_{k-1}$, so
\begin{equation*}
  \beta_k = \frac{-r_k^T(r_k- r_{k-1})}{(r_k-r_{k-1})^T(r_k-r_{k-1})}.
\end{equation*}
From  \cref{eq:m+1-term-rk-linear-random-guess} with $m=1$, we have
\begin{align*}
  r_{k+1} &=(1+\beta_k) Mr_k -\beta_k Mr_{k-1}, \\
  &= \frac{-r_{k-1}^T(r_k- r_{k-1})}{(r_k-r_{k-1})^T(r_k-r_{k-1})} Mr_k+\frac{r_k^T(r_k- r_{k-1})}{(r_k-r_{k-1})^T(r_k-r_{k-1})} Mr_{k-1},\\
  &=\frac{1}{(r_k-r_{k-1})^T(r_k-r_{k-1})}M(-r_k r_{k-1}^T + r_{k-1}r_k^T)(r_k-r_{k-1}),\\
  &=\frac{1}{(r_k-r_{k-1})^T(r_k-r_{k-1})}M\big(-r_k r_{k-1}^T + (r_{k}r_{k-1}^T)^T\big)(r_k-r_{k-1}).
\end{align*}
If $r_{k}=r_{k-1}$, then \rtxt{we choose} $\beta_k=0$. So from \cref{eq:m+1-term-rk-linear-random-guess} we have $r_{k+1}=Mr_k$ \rtxt{(which, in fact, holds for any choice $\beta_k$ as solution of the rank-deficient least squares problem (\ref{eq:Andersonbetas})).}
\end{proof}

\rtxt{It is interesting to note that the condition $r_k=r_{k-1}\neq0$ in \cref{AA1-rk-simple-form-randomx0x1} may actually occur in AA(1) iteration sequences that eventually converge to $r=0$. The following is a simple example to illustrate this point.
%
%
%
\begin{example}\label{prob:r-stall}
Consider solving $Ax=b$ with fixed-point iteration $x_{k+1}=q(x_k)=Mx_k+b$, where $M=I-A$ and the fixed-point iteration is accelerated by AA($m$) iteration (\ref{eq:AA-iteration}) with $m=1$.
For simplicity, we consider an example where $b=0$, and we choose
\begin{equation}\label{eq:q-linear-2x2-stallAM}
  A=\left[ \begin{array}{cc}
  -1/2 & 0\\
  0  &  1/2
  \end{array}
  \right] \qquad \textrm{and} \qquad 
   M=I-A=\left[ \begin{array}{cc}
   3/2 & 0\\
  0  &  1/2
  \end{array}
  \right].    
\end{equation}
We choose initial condition
\begin{equation}\label{eq:q-linear-2x2-stallAM2}
  x_0=\left[ \begin{array}{c}
  -2\\
  2
  \end{array}
  \right] \qquad \textrm{so} \qquad 
   r_0=x_0-q(x_0)=(I-M)x_0= 
   \left[ \begin{array}{c}
   1\\
   1
  \end{array}
  \right].    
\end{equation}
Following \cref{pro:AA1-rks} and \cref{eq:AA-1-step-beta} we get 
\begin{equation}\label{eq:q-linear-2x2-stallAM3}
   r_1=Mr_0= 
   \left[ \begin{array}{c}
   3/2\\
   1/2
  \end{array}
  \right]\qquad \textrm{and} \qquad 
  \beta_1=-1.    
\end{equation}
Since $\beta_1=-1$, we see from \cref{pro:AA1-rks} that
\begin{equation}\label{eq:q-linear-2x2-stallAM4}
   r_2=Mr_0=r_1. 
\end{equation}
So we have $x_1=x_2$ and $r_1=r_2$. 
This renders least-squares problem (\ref{eq:Andersonbetas}) rank-deficient,
but for any choice of $\beta_2$, including the minimum-norm solution $\beta_2=0$,
we obtain from \cref{pro:AA1-rk+1=Mg_k} that
\begin{equation}\label{eq:q-linear-2x2-stallAM5}
   r_3=Mr_2,
\end{equation}
and further numerical calculations show that AA(1) converges to $x=0$ in subsequent iterations.

This simple example shows that the temporary stalling behavior $r_2=r_1$ may occur in the AA(1)
iteration sequence, and that convergence may ensue in subsequent iterations. It is clear from
\cref{pro:AA1-rks} that a nonzero $r_2$ equals $r_1$ if and only if $\beta_1=-1$ (since we
have assumed that $A=I-M$ is nonsingular). From expression \cref{eq:AA-1-step-beta}
we can see that $\beta_1=-1$ if and only if a nonzero initial residual satisfies $r_0^T A r_0=0$. 
The initial residual, $r_0$, in our example was chosen to satisfy this condition for our matrix $A$.
This condition also implies that, if $A$ is such that $r_0^T A r_0>0$ for any nonzero $r_0$ (e.g., if $A$ is symmetric positive definite),
then there are no AA(1) iteration sequences where a nonzero $r_2$ equals $r_1$.
We do not know if $r_k=r_{k-1}$ can occur for $k>2$.
\end{example}
}

\btxt{Next, we will use \cref{AA1-rk-simple-form-randomx0x1} to obtain an explicit nonlinear recurrence relation for the residual update polynomials in \cref{pro:polynomial-form-AAm-AAj} for the case that $m=1$. It is useful for further reference to first specialize two results from \Cref{sec:AAm-krylov} to AA(1), namely \cref{pro:polynomial-form-AAm-AAj} and \cref{pro:AAm-polynomial-power-initial-AAj}:}
\begin{proposition}\label{pro:polynomial-form2}
AA($m$) iteration \cref{eq:AA-iteration} with $m=1$ applied to linear iteration \cref{eq:linear-fixed-point} is a Krylov method, with the residuals given by
\begin{equation}\label{eq:rk-polynomial-form}
    r_{k+1} = p_{k+1}(M)\,r_0,
\end{equation}
where the residual polynomials satisfy the recurrence relation
\begin{equation}\label{eq:AA1-three-term-M}
  p_{k+1}(\lambda)=(1+\beta_k)\lambda p_k(\lambda) - \beta_k \lambda p_{k-1}(\lambda), \quad k\geq 1,
\end{equation}
and $p_0(\lambda) =1, p_1(\lambda) =\lambda$. Moreover, $p_k(\lambda)$ is a polynomial with degree at most $k$ and $p_k(1)=1$ and $p_k(0)= 0$ for $k\geq 1$.
\end{proposition}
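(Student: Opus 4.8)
The plan is to obtain Proposition~\ref{pro:polynomial-form2} as the direct specialization of Proposition~\ref{pro:polynomial-form-AAm-AAj} to the case $m=1$, and to verify each claimed property by the appropriate specialized argument. Since Proposition~\ref{pro:polynomial-form-AAm-AAj} is already established for general $m$, the work here is essentially bookkeeping: substitute $m=1$ into the three cases of the recurrence and check that they collapse into a single clean three-term relation.

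\emph{First}, I would record the base cases. Proposition~\ref{pro:polynomial-form-AAm-AAj} gives $p_1(\lambda)=\lambda$ directly for $k=0$, and defines $p_0(\lambda)=1$; these are exactly the stated initial polynomials. \emph{Next}, for the recurrence I would observe that with $m=1$, the regime $1\le k<m$ is empty, so only the case $k\ge m=1$ applies for all $k\ge1$. Setting $m=1$ in the formula for $k\ge m$,
\begin{equation*}
 p_{k+1}(\lambda)=\Big(1+\sum_{i=1}^{1}\beta_i^{(k)}\Big)\lambda p_{k}(\lambda) - \sum_{i=1}^{1}\beta_i^{(k)} \lambda p_{k-i}(\lambda),
\end{equation*}
and using the AA(1) notation $\beta_1^{(k)}=:\beta_k$ from \cref{eq:AA-1-step-beta}, this becomes precisely \cref{eq:AA1-three-term-M}, namely $p_{k+1}(\lambda)=(1+\beta_k)\lambda p_k(\lambda)-\beta_k\lambda p_{k-1}(\lambda)$ for $k\ge1$. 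The representation \cref{eq:rk-polynomial-form}, $r_{k+1}=p_{k+1}(M)\,r_0$, is inherited verbatim from \cref{eq:rk-polynomial-form-M-AAm-AAj}, establishing that AA(1) is a Krylov method.

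\emph{Then} I would verify the three auxiliary properties. The degree bound $\deg p_k\le k$ follows either by invoking the general degree bound in Proposition~\ref{pro:polynomial-form-AAm-AAj} or by a one-line induction on \cref{eq:AA1-three-term-M}: if $\deg p_k\le k$ and $\deg p_{k-1}\le k-1$, then multiplying by $\lambda$ raises each degree by one, so the right-hand side has degree at most $k+1$. The normalization $p_k(1)=1$ and $p_k(0)=0$ are likewise immediate consequences of the general statement $p_k(1)=1,\ p_k(0)=0$ in Proposition~\ref{pro:polynomial-form-AAm-AAj}, but if a self-contained check is wanted, both propagate through \cref{eq:AA1-three-term-M}: evaluating at $\lambda=1$ gives $p_{k+1}(1)=(1+\beta_k)p_k(1)-\beta_k p_{k-1}(1)=(1+\beta_k)-\beta_k=1$ by the inductive hypothesis $p_k(1)=p_{k-1}(1)=1$, and evaluating at $\lambda=0$ gives $p_{k+1}(0)=0$ because of the explicit factor of $\lambda$ in both terms.

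There is essentially no genuine obstacle in this proof, since all the heavy lifting was done in Proposition~\ref{pro:polynomial-form-AAm-AAj}; the only point requiring a moment of care is confirming that the intermediate-regime case $1\le k<m$ genuinely vanishes when $m=1$ so that \cref{eq:AA1-three-term-M} is valid for \emph{all} $k\ge1$ with no separate treatment of small $k$. One checks that $p_2(\lambda)=(1+\beta_1)\lambda p_1(\lambda)-\beta_1\lambda p_0(\lambda)=(1+\beta_1)\lambda^2-\beta_1\lambda$, which matches the explicit expression for $r_2$ in \cref{pro:AA1-rks}, confirming that the recurrence is correctly seeded from the base cases $p_0(\lambda)=1$ and $p_1(\lambda)=\lambda$.
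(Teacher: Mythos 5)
Your proof is correct and follows exactly the route the paper intends: the paper states \cref{pro:polynomial-form2} without a separate proof, presenting it as the direct specialization of \cref{pro:polynomial-form-AAm-AAj} to $m=1$, which is precisely your argument (including the observation that the regime $1\le k<m$ is vacuous and the identification $\beta_1^{(k)}=\beta_k$). Your additional inductive checks of the degree bound, $p_k(1)=1$, $p_k(0)=0$, and the consistency of $p_2$ with \cref{pro:AA1-rks} are sound, if slightly more than the paper bothers to write down.
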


\begin{proposition}\label{pro:polynomial-form3}
The residuals of AA($m$) iteration \cref{eq:AA-iteration} with $m=1$ applied to linear iteration \cref{eq:linear-fixed-point} satisfy
\begin{equation*}
 r_{2s+1} =M^{s+1}\widehat{p}_{s}(M)\,r_0, \quad r_{2s+2}=M^{s+1} \widehat{p}_{s+1}(M)\,r_0,
\end{equation*}
where $\widehat{p}_{s+i-1}(\lambda), i=1,2$ is a polynomial with degree at most $s+i-1$.
\end{proposition}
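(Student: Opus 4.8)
The plan is to recognize that \cref{pro:polynomial-form3} is exactly the $m=1$ instance of the general periodicity result \cref{pro:AAm-polynomial-power-initial-AAj}. Setting $m=1$ in \cref{eq:period} makes the period $m+1=2$, and the two cases $i=1,2$ become $r_{2s+1}=M^{s+1}g_s(M)\,r_0$ and $r_{2s+2}=M^{s+1}g_{s+1}(M)\,r_0$, where $g_s$ and $g_{s+1}$ have degrees at most $s$ and $s+1$; renaming $g$ as $\widehat{p}$ reproduces the statement verbatim. So the shortest route is simply to cite \cref{pro:AAm-polynomial-power-initial-AAj}. I would flag one notational point: the subscript on $\widehat{p}$ should be read as a degree bound rather than as an index into one globally consistent polynomial sequence. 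Indeed, the polynomial $\widehat{p}_{s+1}$ appearing in $r_{2s+2}$ (through $\lambda^{s+1}\widehat{p}_{s+1}=p_{2s+2}$) is not the same as the $\widehat{p}_{s+1}$ appearing in $r_{2s+3}$ (through $\lambda^{s+2}\widehat{p}_{s+1}=p_{2s+3}$); keeping this in mind avoids an apparent contradiction of the form $p_{2s+3}=\lambda\,p_{2s+2}$.

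For completeness, and because \cref{sec:AA(1)-Krylov} develops AA(1) on its own footing, I would also give a self-contained induction directly from the three-term recurrence \cref{eq:AA1-three-term-M} of \cref{pro:polynomial-form2}. The quantity to track is the largest power of $\lambda$ dividing $p_k$: the claim is $\lambda^{s+1}\mid p_{2s+1}$ and $\lambda^{s+1}\mid p_{2s+2}$ for every $s\ge0$, with quotients of degree at most $s$ and $s+1$. The base case $s=0$ is immediate from $p_1(\lambda)=\lambda$ and $p_2(\lambda)=(1+\beta_1)\lambda^2-\beta_1\lambda=\lambda\big((1+\beta_1)\lambda-\beta_1\big)$. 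For the inductive step, writing $p_{2s+1}=\lambda^{s+1}q_s$ and $p_{2s+2}=\lambda^{s+1}q_{s+1}$, the recurrence with $k=2s+2$ gives $p_{2s+3}=\lambda^{s+2}\big((1+\beta_{2s+2})q_{s+1}-\beta_{2s+2}q_s\big)$, and the recurrence with $k=2s+3$ gives $p_{2s+4}=\lambda^{s+2}\big((1+\beta_{2s+3})\lambda\,\widetilde{q}-\beta_{2s+3}q_{s+1}\big)$, where $\widetilde{q}=(1+\beta_{2s+2})q_{s+1}-\beta_{2s+2}q_s$. Reading off the bracketed factors yields the quotients for the pair at level $s+1$, which advances the induction.

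The only point needing care—rather than a genuine obstacle—is the bookkeeping of the powers of $\lambda$ in the step for $p_{2s+4}$: at that point $p_{2s+3}$ already carries $\lambda^{s+2}$ while $p_{2s+2}$ carries only $\lambda^{s+1}$, so the common factor is $\lambda^{s+2}$ and one must pull the extra $\lambda$ out of the $p_{2s+3}$ term by hand. The degree bounds then follow either by tracking degrees through the induction (each quotient gains at most one degree every two steps, since $\deg\widetilde{q}\le s+1$ and $\deg q_{s+1}\le s+1$) or, more directly, from $\deg p_{2s+1}\le 2s+1$ and $\deg p_{2s+2}\le 2s+2$ in \cref{pro:polynomial-form2}, subtracting the factored power $s+1$.
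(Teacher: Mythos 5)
Your proposal is correct and takes the same route as the paper: the paper gives no separate proof of \cref{pro:polynomial-form3}, presenting it (together with \cref{pro:polynomial-form2}) simply as the $m=1$ specialization of \cref{pro:AAm-polynomial-power-initial-AAj}, which is exactly your first paragraph, index bookkeeping and all. Your supplementary induction from \cref{eq:AA1-three-term-M} correctly fills in the details that the paper leaves implicit behind ``it can be shown easily,'' and your remark that the subscript on $\widehat{p}$ is a degree bound rather than an index into a single consistent polynomial sequence is accurate and worth keeping, since it forestalls the spurious conclusion $p_{2s+3}=\lambda\,p_{2s+2}$.
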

\cref{pro:polynomial-form3} shows that the AA(1) residual polynomial gains a power of $M$ every two iterations.

\btxt{The following property now provides an explicit nonlinear three-term recurrence relation for the AA(1) residual update polynomials, where the polynomials are expressed not in terms of $\beta_k$ but directly using multivariate matrix polynomials $L_k$ in $M$, $M^T$ and $R_0=r_0r_0^T$.}
\begin{proposition} \label{prop:rk-Lk}
Let $r_k$ be the residual of AA($m$) iteration \cref{eq:AA-iteration} with $m=1$ applied to linear iteration \cref{eq:linear-fixed-point}. Assume that $r_k\neq r_{k-1}$ for all $k$. Then
 \begin{equation}\label{eq:rk-Lk-form}
    r_{k} =L_k r_0, \quad  k=0, 1, \ldots
  \end{equation}
with $L_0=I, L_1= M$ and
\begin{equation}\label{eq:Lk-form}
  L_{k+1} = \frac{1}{\|(L_k-L_{k-1})r_0\|^2} M\left(-L_kR_0 L_{k-1}^T+ (L_kR_0 L_{k-1}^T)^T \right) (L_k-L_{k-1}), \quad k\geq 1,
\end{equation}
where $R_0= r_0r_0^T$.
\end{proposition}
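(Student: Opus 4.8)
The plan is to prove this by induction on $k$, using the nonlinear three-term recurrence for the residuals established in \cref{AA1-rk-simple-form-randomx0x1}. The central idea is that the matrices $L_k$ are precisely the operators that reproduce the residuals when applied to the fixed initial residual $r_0$, so that the matrix recurrence \cref{eq:Lk-form} is obtained by taking the residual-level recurrence of \cref{AA1-rk-simple-form-randomx0x1}, replacing every occurrence of $r_k$ and $r_{k-1}$ by $L_k r_0$ and $L_{k-1} r_0$, and factoring a single $r_0$ out to the right. The non-degeneracy hypothesis $r_k \neq r_{k-1}$ for all $k$ is exactly what keeps us in the first (non-degenerate) case of \cref{AA1-rk-simple-form-randomx0x1} throughout the induction.

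For the base cases I would verify directly that $L_0 r_0 = I r_0 = r_0$ and $L_1 r_0 = M r_0 = r_1$, the latter using $x_1 = q(x_0)$ so that $r_1 = M r_0$. For the inductive step I would assume \cref{eq:rk-Lk-form} holds up to index $k$, so that $r_k = L_k r_0$ and $r_{k-1} = L_{k-1} r_0$, and substitute these into the formula of \cref{AA1-rk-simple-form-randomx0x1}. The three ingredients transform as follows: the difference becomes $r_k - r_{k-1} = (L_k - L_{k-1})r_0$; the scalar denominator becomes $(r_k - r_{k-1})^T(r_k - r_{k-1}) = \|(L_k - L_{k-1})r_0\|^2$; and the rank-one term becomes $r_k r_{k-1}^T = (L_k r_0)(L_{k-1} r_0)^T = L_k r_0 r_0^T L_{k-1}^T = L_k R_0 L_{k-1}^T$, using $R_0 = r_0 r_0^T$, with $(r_k r_{k-1}^T)^T = (L_k R_0 L_{k-1}^T)^T$ likewise. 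Making these replacements yields
\begin{equation*}
  r_{k+1} = \frac{1}{\|(L_k - L_{k-1})r_0\|^2}\, M\left(-L_k R_0 L_{k-1}^T + (L_k R_0 L_{k-1}^T)^T\right)(L_k - L_{k-1})\,r_0,
\end{equation*}
and since the entire operator multiplying $r_0$ on the right is exactly $L_{k+1}$ as defined in \cref{eq:Lk-form}, this gives $r_{k+1} = L_{k+1} r_0$ and closes the induction.

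Because the argument is essentially a direct substitution, no step presents a genuine obstacle; the only point requiring care is the factoring of $r_0$ to the right. This is legitimate precisely because, after substitution, $r_0$ appears as the rightmost factor of every surviving term and never in a position (inside a transpose or an inner product paired with a vector to its right) that would block the factorization. In particular, the scalar denominator and the matrix $L_k R_0 L_{k-1}^T$ depend on $r_0$ but are themselves scalars and matrices, independent of any subsequent vector, so \cref{eq:Lk-form} genuinely defines $L_{k+1}$ as an operator. One should simply note that the two uses of $r_0$ inside $R_0$ are absorbed into the fixed quantity $R_0$, while the remaining trailing $r_0$ is the one factored out, so that the identity $r_{k+1}=L_{k+1}r_0$ holds for the single fixed initial residual under consideration.
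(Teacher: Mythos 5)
Your proof is correct and follows essentially the same route as the paper's: an induction on $k$ that substitutes $r_k = L_k r_0$ and $r_{k-1} = L_{k-1} r_0$ into the nonlinear recurrence of \cref{AA1-rk-simple-form-randomx0x1}, uses $r_k r_{k-1}^T = L_k R_0 L_{k-1}^T$, and identifies the resulting operator acting on $r_0$ with the $L_{k+1}$ defined in \cref{eq:Lk-form}. The only cosmetic difference is the direction of the computation: the paper starts from $L_{n+1} r_0$ and reduces it to $r_{n+1}$, whereas you expand $r_{k+1}$ and factor $r_0$ out to the right.
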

\begin{proof}
Assume that when $k\leq n$,  \cref{eq:rk-Lk-form} is true with $L_k$ defined by \cref{eq:Lk-form}. We show that \cref{eq:rk-Lk-form} remains valid for  $k=n+1$ and $L_{k+1}$ given by \cref{eq:Lk-form}. Using the equation for $L_{k+1}$ in \cref{eq:Lk-form}, we have
\begin{align*}
  L_{n+1}r_0 &= \frac{1}{\|(L_n-L_{n-1})r_0\|^2} M\left(-L_nR_0 L_{n-1}^T+ (L_nR_0 L_{n-1}^T)^T \right) (L_n-L_{n-1})r_0, \\
    &=   \frac{1}{\|r_n-r_{n-1}\|^2} M\left(-r_n r_{n-1}^T+ (r_n r_{n-1}^T)^T \right) (r_n-r_{n-1}),\\
    & =r_{n+1},
\end{align*}
by \cref{AA1-rk-simple-form-randomx0x1}. This completes the proof.
\end{proof}
We list the first few  $L_k$ in the following:
\begin{itemize}
\item $L_0 =I$.
\item $L_1= M$.
\item $L_2 = \displaystyle \frac{1}{\|(M-I)r_0\|^2}  M\left(-MR_0+ (MR_0)^T\right)(M-I)$.

\item Let $T_1=L_2 R_0L_1^T=\displaystyle \frac{1}{\|(M-I)r_0\|^2}  M\left(-MR_0+ (MR_0)^T\right)(M-I)R_0 M^T$ and
$T_2 =L_2-L_1=\displaystyle \frac{1}{\|(M-I)r_0\|^2}  M\left(-MR_0+ (MR_0)^T\right)(M-I)-M$.

Then,
\begin{align*}
L_3 & = \frac{1}{\|T_2r_0\|^2}M(-T_1+T_1^T)T_2.
\end{align*}
\end{itemize}
\btxt{
From \eqref{eq:Lk-form}, it is interesting to note that $\textrm{rank}(L_{k+1}) \leq 2$. 
Specifically, ${\textrm{rank}(R_0) = 1}$ (recall $R_0= r_0 r_0^T$), and, so, by the rank product rule, $\textrm{rank}(L_kR_0 L_{k-1}^T) \leq 1$, and then by the subadditivity of the rank, $\textrm{rank}(-L_kR_0 L_{k-1}^T+ (L_kR_0 L_{k-1}^T)^T) \leq 2$. Thus, again by the rank product rule, it follows that $\textrm{rank}(L_{k+1}) \leq 2$.
In \Cref{sec:AA1-bounds} we derive a bound on $\Vert r_{k+1} \Vert$ by exploiting this rank-two nature of the residual update.
%

}

We finally derive a lower bound for the coefficients $\beta_k$ in AA(1) in the linear case.
\rtxt{We noted this simple but conspicuous property in our numerical results
and it appears that this property is not available in the literature, so
we state it here explicitly.}
%

\begin{proposition}\label{thm:lower-boound-betak}
Consider AA($m$) iteration (\ref{eq:general-guess-AAm}) with $m=1$ and general initial guess $\{x_0,x_1\}$, applied to linear fixed-point function $q(x)=Mx+b$ with $\|M\|\leq 1$.
Let $x_0$ and $x_1$ be initial guesses with $||r_1||\leq ||r_0||$.
Then the AA(1) coefficients $\beta_k$  satisfy
\begin{equation*}
\beta_k>-1, \quad  k=1,2,\ldots\,\, .
\end{equation*}
\end{proposition}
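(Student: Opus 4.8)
The plan is to reduce the claim $\beta_k > -1$ to a strict inequality on the inner product $r_{k-1}^T r_k$, and then to establish that inequality from monotonicity of the residual norms together with Cauchy--Schwarz. First I would dispose of the degenerate step: if $r_k = r_{k-1}$ then $\beta_k = 0 > -1$ by the convention adopted after \cref{eq:AA-1-step-beta}, so it remains to treat $r_k \neq r_{k-1}$. In that case, starting from the closed form \cref{eq:AA-1-step-beta} and writing $d = r_k - r_{k-1}$, a one-line manipulation gives
\[
\beta_k + 1 = \frac{-r_k^T d + \|d\|^2}{\|d\|^2} = \frac{-r_{k-1}^T d}{\|d\|^2} = \frac{\|r_{k-1}\|^2 - r_{k-1}^T r_k}{\|r_k - r_{k-1}\|^2}.
\]
Since the denominator is strictly positive, the claim $\beta_k > -1$ is equivalent to the strict inequality $r_{k-1}^T r_k < \|r_{k-1}\|^2$.

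Next I would establish the monotonicity $\|r_k\| \le \|r_{k-1}\|$ for every $k \ge 1$. For $k=1$ this is exactly the hypothesis $\|r_1\| \le \|r_0\|$. For $k \ge 2$ I would use \cref{eq:m+1-term-rk-linear-random-guess} with $m=1$ to write $r_k = M w_{k-1}$, where $w_{k-1} = (1+\beta_{k-1}) r_{k-1} - \beta_{k-1} r_{k-2}$ is the argument minimized in the AA(1) least-squares problem \cref{eq:Andersonbetas} at step $k-1$. Taking the trial value $\beta = 0$ in that minimization shows $\|w_{k-1}\| \le \|r_{k-1}\|$, and then $\|r_k\| = \|M w_{k-1}\| \le \|M\|\,\|w_{k-1}\| \le \|r_{k-1}\|$ because $\|M\| \le 1$. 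Chaining these gives the monotone inequality for all $k \ge 1$.

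With monotonicity in hand, Cauchy--Schwarz yields $r_{k-1}^T r_k \le \|r_{k-1}\|\,\|r_k\| \le \|r_{k-1}\|^2$, which already gives $\beta_k \ge -1$; the real work, and the step I expect to be the main obstacle, is upgrading this to a \emph{strict} inequality by excluding the boundary case. I would argue that equality $r_{k-1}^T r_k = \|r_{k-1}\|^2$ forces equality simultaneously in Cauchy--Schwarz and in $\|r_k\| \le \|r_{k-1}\|$. I would first observe that $r_{k-1}=0$ is impossible here, since by monotonicity it would force $\|r_k\| \le \|r_{k-1}\| = 0$, hence $r_k = 0 = r_{k-1}$, contradicting $r_k \neq r_{k-1}$. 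Thus $r_{k-1} \neq 0$, so equality in Cauchy--Schwarz gives $r_k = c\,r_{k-1}$; nonnegativity of $r_{k-1}^T r_k = c\|r_{k-1}\|^2$ gives $c \ge 0$; and $\|r_k\| = \|r_{k-1}\|$ gives $c = 1$, i.e.\ $r_k = r_{k-1}$, again contradicting our standing assumption. Therefore the inequality is strict, and combining with the degenerate case $\beta_k = 0$ treated above yields $\beta_k > -1$ for all $k = 1, 2, \ldots$.
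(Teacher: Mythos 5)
Your proof is correct, and its skeleton matches the paper's: both arguments reduce $\beta_k > -1$ to strict negativity of $r_{k-1}^T(r_k - r_{k-1})$ via the same algebraic identity (the paper's \cref{eq:pf-inner-product}), and both rest on the monotonicity $\|r_k\| \le \|r_{k-1}\|$. You differ in two places, and both differences are worth noting. First, the paper obtains monotonicity by citing Theorem 2.1 of \cite{toth2015}, whereas you re-derive it directly from the optimality of $\beta_{k-1}$ in least-squares problem \cref{eq:Andersonbetas} (comparing with the trial value $\beta=0$) together with $\|M\| \le 1$ and the factorization $r_k = M w_{k-1}$ from \cref{eq:m+1-term-rk-linear-random-guess}; this is exactly the mechanism behind the cited theorem specialized to the linear case, and it keeps the proof self-contained. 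Second, for the strictness step the paper writes $r_{k-1}^T(r_k - r_{k-1}) = \|r_{k-1}\|\big(\|r_k\|\cos\phi_k - \|r_{k-1}\|\big)$ and asserts $\cos\phi_k \in [-1,1)$ ``since $r_k \neq r_{k-1}$''; that assertion is not literally correct (if $r_k$ is a positive scalar multiple of $r_{k-1}$ with a different norm, then $\cos\phi_k = 1$ while $r_k \neq r_{k-1}$, and the angle is undefined when one of the vectors vanishes), although the paper's conclusion survives because monotonicity excludes the problematic configurations. Your Cauchy--Schwarz equality-case analysis---rule out $r_{k-1}=0$ by monotonicity, then force $r_k = c\,r_{k-1}$ with $c=1$, contradicting $r_k \neq r_{k-1}$---handles precisely these boundary cases and is the cleaner way to finish the argument.
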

\begin{proof}
\rtxt{First, with $||r_1||\leq ||r_0||$ and $\|M\|\leq 1$, we have from Theorem 2.1 in \cite{toth2015} 
that
$$
\|r_k\|\leq \|r_{k-1}\|, \qquad k=1,2,3,\ldots
$$
Next, recall that 
\begin{equation*}
  \beta_k = \frac{-r_k^T(r_k-r_{k-1})}{(r_k-r_{k-1})^T(r_k-r_{k-1})},
\end{equation*}
if  $r_k\neq r_{k-1}$, and $\beta_k=0$ otherwise. 

When $r_k= r_{k-1}$ we have $\beta_k=0 > -1$ and the results holds.

We next consider the case that $r_k\neq r_{k-1}$.
Let $w_k= r_k-r_{k-1}\neq 0$. Then, 
$$
\beta_k ||w_k||^2 =-r_k^Tw_k=(-w_k-r_{k-1})^Tw_k=-\|w_k\|^2-r_{k-1}^Tw_k,
$$
%
%
%
%
which means that
\begin{equation}\label{eq:pf-inner-product}
r_{k-1}^T w_k =(-1-\beta_k)||w_k||^2.
\end{equation}
We also have that
\begin{equation}\label{eq:next}
r_{k-1}^T w_k = r_{k-1}^T (r_k-r_{k-1}) = \|r_{k-1}\| (\|r_{k}\| \cos(\phi_k)-\|r_{k-1}\|) < 0,
\end{equation}
where the inequality follows from $\|r_k\|\leq \|r_{k-1}\|$ and $\cos(\phi_k) \in [-1,1)$ since $r_k\neq r_{k-1}$.
Combining \cref{eq:pf-inner-product} and \cref{eq:next} we conclude
$$
-1-\beta_k < 0,
$$
which proves the result, $\beta_k>-1$.}
\end{proof}
\begin{remark}
It is easy to see that \cref{thm:lower-boound-betak} applies to the traditional  AA(1) iteration \cref{eq:AA-iteration} with single initial guess $x_0$, since then $x_1=Mx_0+b$ and $r_1=Mr_0$, so $\|r_1\|\leq \|r_0\|$ if $\|M\|\leq 1$.
\rtxt{\cref{thm:lower-boound-betak} also shows that when $\|M\|\leq 1$ the temporary stalling behavior from \cref{prob:r-stall} with nonzero $r_2=r_1$ cannot happen for any initial guess $r_0$, since it requires $\beta_1=-1$.
}
\end{remark}

\rtxt{ 
\subsection{Convergence bounds for AA(1) residuals} \label{sec:AA1-bounds}
The current theoretical understanding on quantifying by how much AA($m$) can improve the convergence speed of the underlying FP iteration
is very limited, including for the linear case and $m=1$.
We now present new convergence bounds on the norm of the residuals generated by AA(1) applied to the linear iteration \cref{eq:linear-fixed-point}.
%

\begin{theorem} \label{thm:AA1-bounds}
Let $r_k$ be the residual of AA($m$) iteration \cref{eq:AA-iteration} with $m=1$ applied to linear iteration \cref{eq:linear-fixed-point}. 
Define $y_k > 0$ as the ratio of the norms of the $k$th and $(k-1)$st residual vectors, 
$
y_k := \Vert r_{k} \Vert / \Vert r_{k-1} \Vert,
$
and define $\phi_k \in [0, \pi]$ as the angle between these two vectors,
$
r_{k}^T r_{k-1} = \Vert r_{k-1} \Vert \Vert r_{k} \Vert \cos \phi_k.
$
Let $\sigma_{\min}(M)$ and $\sigma_{\max}(M)$ denote the minimum and maximum singular values of $M$, respectively. 
Then, the norm of the $(k+1)$st residual vector for $k > 1$ may be bounded as
\begin{align} \label{eq:rk+1-lower-upper-bound}
\sqrt{{\cal B}(\phi_k, y_k)} 
\sigma_{\min}(M)
&\leq
\frac{\Vert r_{k+1} \Vert}
{\Vert r_{k} \Vert}
\leq 
\sqrt{{\cal B}(\phi_k, y_k)} 
\sigma_{\max}(M), 
\quad &&r_k \neq r_{k-1},
\\
\label{eq:rk+1-lower-upper-bound-special}
\sigma_{\min}(M)
&\leq
\frac{\Vert r_{k+1} \Vert}
{\Vert r_{k} \Vert}
\leq 
\sigma_{\max}(M), 
\quad &&r_k = r_{k-1},
\end{align}
where the function ${\cal B} \colon \big( [0, \pi] \times (0, \infty) \big) \setminus (0, 1) \to [0, 1]$ is given by\footnote{\rtxt{Note that ${\cal B}(\phi_k, y_k)$ is indeterminate at the point $(\phi_k, y_k) = (0, 1)$ corresponding to the special case when $r_k = r_{k-1}$.} 
%
}
\begin{align} \label{eq:calB_def}
{\cal B}(\phi_k, y_k) = \frac{\sin^2 \phi_k}{y_k^2 - 2 y_k \cos \phi_k + 1}.
\end{align}
\end{theorem}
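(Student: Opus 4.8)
The plan is to start from the explicit nonlinear recurrence of \cref{AA1-rk-simple-form-randomx0x1}, which already eliminates $\beta_k$, and to decouple the problem into a ``singular value part'' governed by $M$ and a ``geometric part'' governed only by $r_k$ and $r_{k-1}$. Writing $w_k := r_k - r_{k-1}$ and using $(r_k r_{k-1}^T)^T = r_{k-1} r_k^T$, the middle factor in \cref{AA1-rk-simple-form-randomx0x1} is the skew-symmetric matrix $S := r_{k-1} r_k^T - r_k r_{k-1}^T$, so for $r_k \neq r_{k-1}$ the recurrence reads $r_{k+1} = \|w_k\|^{-2}\, M S w_k$. Applying the standard singular value inequalities $\sigma_{\min}(M)\|v\| \le \|Mv\| \le \sigma_{\max}(M)\|v\|$ with $v = S w_k$ then reduces everything to evaluating the scalar $\|S w_k\| / (\|w_k\|^2 \|r_k\|)$, which I claim equals $\sqrt{{\cal B}(\phi_k, y_k)}$.

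The heart of the argument is the evaluation of $\|S w_k\|$. Setting $a := \|r_k\|^2$, $b := \|r_{k-1}\|^2$ and $c := r_k^T r_{k-1}$, a short computation gives $S w_k = (b-c)\,r_k + (a-c)\,r_{k-1}$, and expanding the resulting quadratic form I expect the cross terms to collapse into the clean factorization
\begin{equation*}
  \|S w_k\|^2 = (ab - c^2)\,(a - 2c + b) = (ab - c^2)\,\|w_k\|^2.
\end{equation*}
This is the main obstacle: it is the single place where the algebra has to cancel exactly, and it is what forces the final bound into the compact form ${\cal B}$. Conceptually the identity is transparent, since $S$ is skew-symmetric and maps the plane spanned by $r_k$ and $r_{k-1}$ into itself, where it necessarily acts as $\sqrt{ab-c^2}$ (the Gram determinant) times a $90^{\circ}$ rotation; hence $\|S w_k\| = \sqrt{ab - c^2}\,\|w_k\|$ for the planar vector $w_k$.

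The remaining steps are routine substitutions. Using $c = \|r_k\|\,\|r_{k-1}\| \cos \phi_k$ gives $ab - c^2 = \|r_k\|^2 \|r_{k-1}\|^2 \sin^2 \phi_k$, and factoring $\|r_{k-1}\|^2$ out of $\|w_k\|^2 = a - 2c + b$ gives $\|w_k\|^2 = \|r_{k-1}\|^2 (y_k^2 - 2 y_k \cos \phi_k + 1)$. Since $\phi_k \in [0,\pi]$ we have $\sin \phi_k \ge 0$, so combining these yields
\begin{equation*}
  \frac{\|S w_k\|}{\|w_k\|^2 \|r_k\|}
  = \frac{\|r_k\|\,\|r_{k-1}\|\,\sin \phi_k\,\|w_k\|}{\|w_k\|^2 \|r_k\|}
  = \frac{\sin \phi_k}{\sqrt{y_k^2 - 2 y_k \cos \phi_k + 1}}
  = \sqrt{{\cal B}(\phi_k, y_k)}.
\end{equation*}
Multiplying through by the singular value bounds on $\|M S w_k\|$ and dividing by $\|r_k\|$ gives \cref{eq:rk+1-lower-upper-bound}. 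Finally, the degenerate case $r_k = r_{k-1}$, which is precisely the excluded point $(\phi_k,y_k)=(0,1)$, is handled directly from the second branch of \cref{AA1-rk-simple-form-randomx0x1}: there $r_{k+1} = M r_k$ gives $\sigma_{\min}(M) \le \|r_{k+1}\|/\|r_k\| \le \sigma_{\max}(M)$ at once, establishing \cref{eq:rk+1-lower-upper-bound-special}.
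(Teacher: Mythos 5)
Your proposal is correct and follows essentially the same route as the paper's proof: both start from the $\beta_k$-free recurrence of \cref{AA1-rk-simple-form-randomx0x1}, write $r_{k+1} = M S w_k / \Vert w_k \Vert^2$ with $S$ the skew-symmetric rank-two matrix, peel off $M$ via the singular-value inequalities, and reduce everything to showing $\Vert S w_k \Vert / \Vert w_k \Vert^2 = \Vert r_k \Vert \sqrt{{\cal B}(\phi_k, y_k)}$, handling $r_k = r_{k-1}$ separately through $r_{k+1} = M r_k$. The only difference is bookkeeping in that key norm evaluation: the paper shows $w_k$ is an eigenvector of $S^2$ with eigenvalue $(r_k^T r_{k-1})^2 - \Vert r_k \Vert^2 \Vert r_{k-1} \Vert^2$ and uses $\Vert S w_k \Vert^2 = -w_k^T S^2 w_k$, whereas you expand $S w_k$ in the $\{r_k, r_{k-1}\}$ basis and factor the Gram determinant out of the resulting quadratic form (your rotation-by-$90^\circ$ remark is a clean conceptual restatement of the same fact), so the two arguments are the same algebra organized differently.
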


\begin{proof}

First consider the case of \eqref{eq:rk+1-lower-upper-bound} in which $r_k \neq r_{k-1}$. 
To aid in the readability of this portion of the proof we use the shorthands $a := r_k$ and $b := r_{k-1}$.
Recall from \cref{AA1-rk-simple-form-randomx0x1} that in this case $r_{k+1}$ can be expressed as
\begin{align} \label{eq:S_alpha_form}
r_{k+1} = M \frac{S (a - b)}{\Vert a - b \Vert^2} = M \alpha,
\end{align}
in which $S$ is the rank-2, skew-symmetric matrix given by $S = b a^T - a b^T$, and we have defined the shorthand vector $\alpha := \frac{S (a - b)}{\Vert a - b \Vert^2}$.
Now, recall the following inequalities that hold for any vector $w$: 
\begin{align} \label{eq:sing_vals_def}
\sigma_{\min}^2(M) := \inf_{z \neq 0} \frac{\Vert M z \Vert^2}{\Vert z \Vert^2}
\leq 
\frac{\Vert M w \Vert^2}{\Vert w \Vert^2} 
\leq 
\sup_{z \neq 0} \frac{\Vert M z \Vert^2}{\Vert z \Vert^2} =: \sigma_{\max}^2(M).
\end{align}
Taking the squared norm of both sides of \eqref{eq:S_alpha_form} and then applying the inequalities in \eqref{eq:sing_vals_def} to the vector $w = \alpha$ gives
\begin{align} \label{eq:rk+1_alpha_bounds}
\sigma_{\min}^2(M) \Vert \alpha \Vert^2
\leq 
\Vert r_{k+1} \Vert^2
\leq 
\sigma_{\max}^2(M) \Vert \alpha \Vert^2.
\end{align}

We now proceed by showing that $\Vert \alpha \Vert^2 = \Vert a \Vert^2 {\cal B}(\phi_k, y_k) $, and, thus, that \eqref{eq:rk+1_alpha_bounds} is equivalent to the claim \eqref{eq:rk+1-lower-upper-bound}.
By the skew-symmetry of $S$, the squared norm of $\alpha$ can be written as
\begin{align} \label{eq:alpha-norm^2}
\Vert \alpha \Vert^2 
= 
\frac{\Vert S (a-b) \Vert^2}{\Vert a - b \Vert^4} 
= 
\frac{-(a - b)^T S^2 (a-b)}{\Vert a - b \Vert^4}.
\end{align}
To evaluate $\Vert \alpha \Vert^2$, it is instructive to first consider the vector $S^2 (a - b)$:
\begin{align}
S^2 (a - b)
&=
\big[
\big( 
b a^T - a b^T \big) \big( b a^T - a b^T 
\big)
\big] 
(a - b), 
\\
&=
\big[
- \Vert b \Vert^2 a a^T - \Vert a \Vert^2 b b^T 
+ \big(a^T b \big) \big( a b^T + b a^T \big)
\big]
(a - b), 
\\
\begin{split}
&=
-\Vert b \Vert^2 \big( \Vert a \Vert^2 - a^T b \big) a
- \Vert a \Vert^2 \big( a^T b - \Vert b \Vert^2 \big) b
\\ 
& \hspace{4ex} 
+ \big( a^T b \big)
\big[ 
\big( a^T b - \Vert b \Vert^2 \big) a + \big( \Vert a \Vert^2 - a^T b \big) b
\big],
\end{split}
\\
\label{eq:S^2(b-a)}
&=
\Big[ \big( a^T b \big)^2 - \big(\Vert a \Vert \Vert b \Vert \big)^2 \Big] (a - b).
\end{align}
That is, $a - b$ is an eigenvector of $S^2$ with eigenvalue $\big( a^T b \big)^2 - \big(\Vert a \Vert \Vert b \Vert \big)^2 \leq 0$. Plugging \eqref{eq:S^2(b-a)} into \eqref{eq:alpha-norm^2} and simplifying, we find 
\begin{align}
\Vert \alpha \Vert^2 
= 
\frac{ \big(\Vert a \Vert \Vert b \Vert \big)^2 - \big( a^T b \big)^2 }{\Vert a - b \Vert^2} 
&=
\frac{
\big(\Vert a \Vert \Vert b \Vert \big)^2 (1 - \cos^2 \phi_k)
}{\Vert a \Vert^2 - 2 \Vert a \Vert \Vert b \Vert \cos \phi_k + \Vert b \Vert^2},
\\
&=
\Vert a \Vert^2
\frac{\sin^2 \phi_k}
{
\frac{\Vert a \Vert^2}{\Vert b \Vert^2}  
- 2 \frac{\Vert a \Vert}{\Vert b \Vert} \cos \phi_k
+
1 
}.
\end{align}
Recalling that $a = r_k$, and $y_k = \Vert a \Vert / \Vert b \Vert$, we indeed see that $\Vert \alpha \Vert^2 = \Vert r_k \Vert^2 {\cal B}(\phi_k, y_k) $ with ${\cal B}$ defined in \eqref{eq:calB_def}. 
Plugging this into \eqref{eq:rk+1_alpha_bounds}, dividing through by $\Vert r_k \Vert^2$, and then taking the square root yields the claim \eqref{eq:rk+1-lower-upper-bound}.

We conclude the proof by considering the special case in which $r_k = r_{k-1}$.
Recall from \cref{AA1-rk-simple-form-randomx0x1} that when $r_k = r_{k-1}$ the next AA(1) residual is given by $r_{k+1} = M r_k$.
Invoking the inequalities \eqref{eq:sing_vals_def} for the vector $w = r_k$ leads immediately to the claim \eqref{eq:rk+1-lower-upper-bound-special}. 
\end{proof}

\begin{corollary} \label{cor:AA1-equality}
Suppose that the two-norm condition number of $M$ is one, such that ${\sigma_{\min}(M) = \sigma_{\max}(M) = \Vert M \Vert}$. 
Then,
\begin{align}
\frac{\Vert r_{k+1} \Vert}
{\Vert r_{k} \Vert}
&=
\Vert M \Vert
\sqrt{{\cal B}(\phi_k, y_k)}, 
&&r_k \neq r_{k-1},
\\
\frac{\Vert r_{k+1} \Vert}
{\Vert r_{k} \Vert}
&=
\Vert M \Vert, 
&&r_k = r_{k-1}.
\end{align}
\end{corollary}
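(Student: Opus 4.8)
The plan is to obtain this corollary as an immediate consequence of the two-sided bound established in \cref{thm:AA1-bounds}, using the fact that a two-norm condition number of one forces the lower and upper bounds to coincide. Since $\sigma_{\min}(M) = \sigma_{\max}(M) = \Vert M \Vert$ by hypothesis, in each of the two cases of the theorem the residual ratio $\Vert r_{k+1} \Vert / \Vert r_k \Vert$ is squeezed between two identical quantities, which pins it down exactly.

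First I would treat the generic case $r_k \neq r_{k-1}$. Substituting $\sigma_{\min}(M) = \sigma_{\max}(M) = \Vert M \Vert$ into the chain \eqref{eq:rk+1-lower-upper-bound} turns it into $\Vert M \Vert \sqrt{{\cal B}(\phi_k, y_k)} \leq \Vert r_{k+1} \Vert / \Vert r_k \Vert \leq \Vert M \Vert \sqrt{{\cal B}(\phi_k, y_k)}$, so the outer inequalities become equalities and the first claimed identity follows. Next, for the degenerate case $r_k = r_{k-1}$, the same substitution in \eqref{eq:rk+1-lower-upper-bound-special} gives $\Vert M \Vert \leq \Vert r_{k+1} \Vert / \Vert r_k \Vert \leq \Vert M \Vert$, yielding the second identity. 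There is essentially no obstacle here: the only point requiring care is that ${\cal B}$ is indeterminate at $(\phi_k, y_k) = (0, 1)$, but this is precisely the situation $r_k = r_{k-1}$, which is handled by the separate second formula rather than through ${\cal B}$.

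As a conceptual check, it is worth noting an alternative route that bypasses the theorem. A two-norm condition number of one is equivalent to $M = \Vert M \Vert\, Q$ for some orthogonal matrix $Q$, so that $M$ acts as an isometry up to the scaling $\Vert M \Vert$, i.e., $\Vert M w \Vert = \Vert M \Vert\, \Vert w \Vert$ for every vector $w$. Applying this directly to $r_{k+1} = M \alpha$ from \eqref{eq:S_alpha_form} gives $\Vert r_{k+1} \Vert = \Vert M \Vert\, \Vert \alpha \Vert$ exactly, and then substituting the identity $\Vert \alpha \Vert^2 = \Vert r_k \Vert^2\, {\cal B}(\phi_k, y_k)$ derived inside the proof of \cref{thm:AA1-bounds} recovers the same conclusion. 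Either argument is short; the substance of the result lives entirely in \cref{thm:AA1-bounds}, and this corollary merely records the collapse of its bounds when the singular values of $M$ all agree.
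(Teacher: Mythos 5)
Your proposal is correct and matches the paper's own proof, which likewise observes that with $\sigma_{\min}(M)=\sigma_{\max}(M)=\Vert M \Vert$ the lower and upper bounds in \cref{thm:AA1-bounds} coincide, forcing equality in both cases. The alternative isometry argument via $r_{k+1}=M\alpha$ and $\Vert \alpha \Vert^2 = \Vert r_k \Vert^2 {\cal B}(\phi_k,y_k)$ is a valid extra check but does not change the substance.
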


\begin{proof}
These two equalities follow from \eqref{eq:rk+1-lower-upper-bound} and \eqref{eq:rk+1-lower-upper-bound-special}, respectively, because the left- and right-hand sides are equal.
\end{proof}

\Cref{thm:AA1-bounds} and \cref{cor:AA1-equality} describe local convergence behaviour for AA(1) in the sense that they hold for any iteration $k$, but they do not say anything about the effective or average convergence behaviour in the asymptotic regime as ${k \to \infty}$.
Presently, it is not clear whether these results can in fact be used to derive asymptotic convergence results (see the numerical results in \Cref{subsec:linear-bounds} for further discussion on this).

The residual vectors from the FP iteration \eqref{eq:linear-fixed-point}, i.e., the underlying iteration that AA(1) is accelerating, satisfy $r_{k+1} = M r_k$. Clearly, the FP residuals satisfy the local bounds $\sigma_{\min}(M) \leq \Vert r_{k+1} \Vert / \Vert r_k \Vert \leq \sigma_{\max}(M)$. It is these local bounds, rather than an asymptotic convergence result, that serve as the relevant comparison to the local AA(1) bounds from \Cref{thm:AA1-bounds} and \cref{cor:AA1-equality}.\footnote{\rtxt{Note that these local FP bounds are the same as the AA(1) bounds \eqref{eq:rk+1-lower-upper-bound-special} when $r_{k} = r_{k-1}$, because, in this case, AA(1) just applies the basic FP iteration, $r_{k+1} = M r_k$. 
For this reason, in the following comparison between AA(1) and the FP iteration, we suppose that AA(1) residuals satisfy $r_k \neq r_{k-1}$ so that AA(1) is in fact distinct from the FP iteration.}}
%

\begin{figure}[b!]
\centering
\includegraphics[width=.485\textwidth]{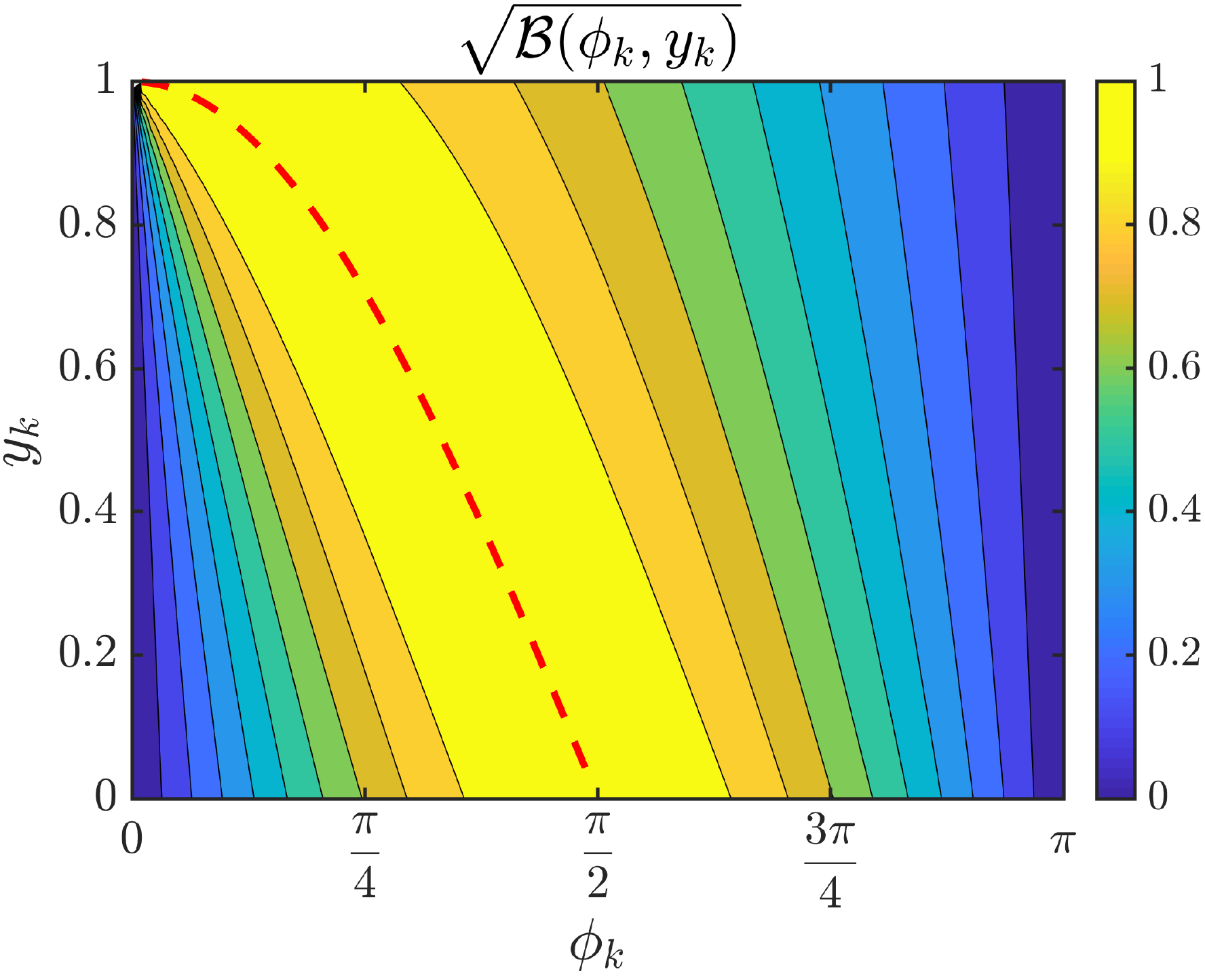}
\quad
\includegraphics[width=.46\textwidth]{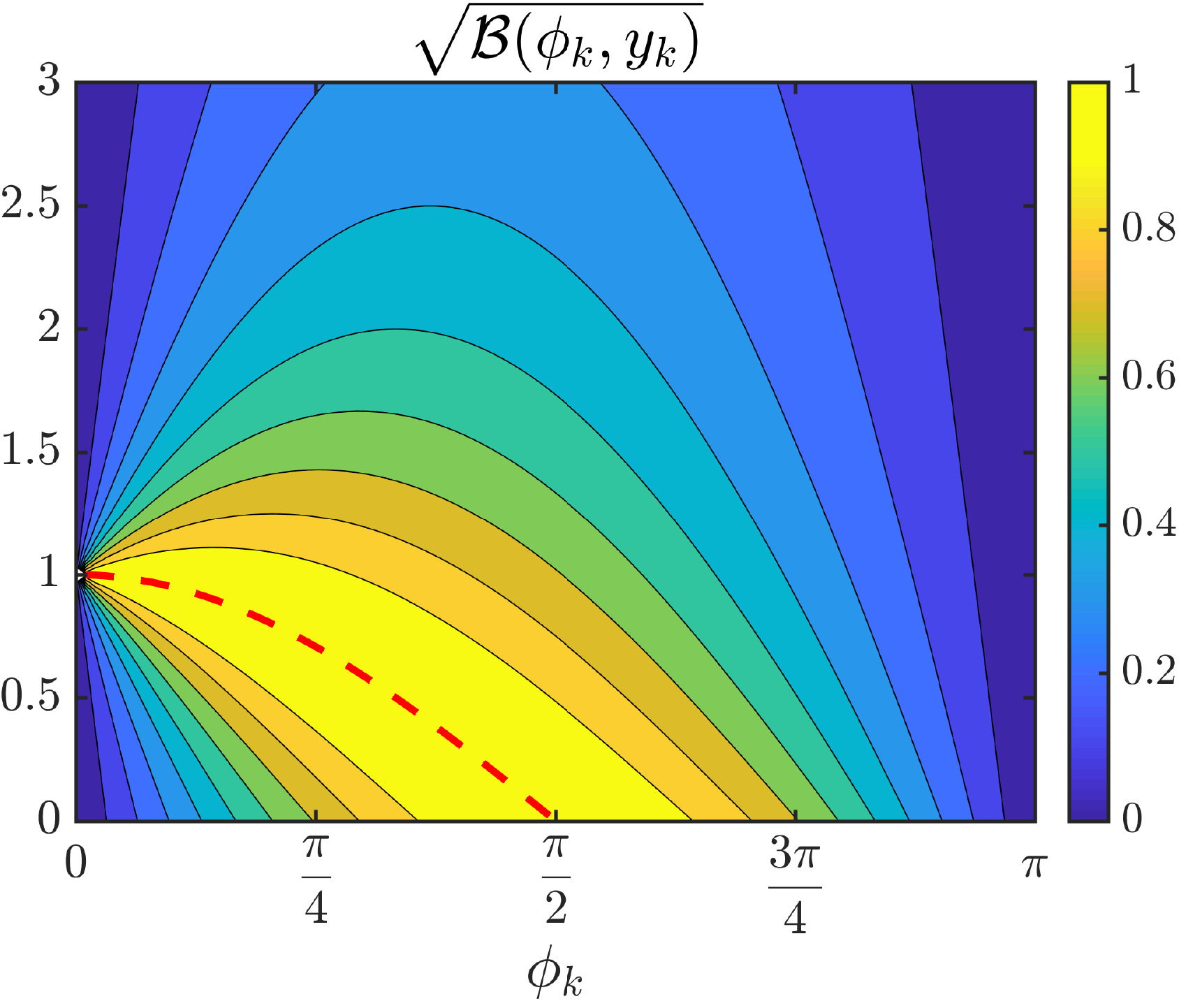}
\caption{Contours of the function $\sqrt{{\cal B}(\phi_k, y_k)}$ that appears in the AA(1) residual bounds and expressions given in \cref{thm:AA1-bounds} and \cref{cor:AA1-equality}.
At left, the function is shown on the interval $y_k \in (0, 1]$, while at right it is shown on the interval $y_k \in (0, 3]$.
The dashed red line is the critical line $y_k = \cos \phi_k \in  (0, 1)$ along which the function ${\cal B}(\phi_k, y_k)$ reaches its maximum value of unity, and also corresponds to the acceleration coefficient \eqref{eq:AA-1-step-beta} being $\beta_k = 0$.
\label{fig:calB_plot}
}
\end{figure}

To compare the local FP iteration bounds with our new AA(1) bounds it is instructive to consider in more detail the function ${\cal B}$ given by \eqref{eq:calB_def}.
See \cref{fig:calB_plot} for contour plots of $\sqrt{{\cal B}(\phi_k, y_k)}$ in $\phi_k$--$y_k$ space. 
The function ${\cal B}$ has several interesting properties, including that ${\cal B}(\phi_k, y_k) \in [0, 1]$, with its extremum of unity being reached only along the critical line $y_k = \cos \phi_k \in (0, 1)$.
Geometrically, $y_k = \cos \phi_k$ corresponds to $r_k$ being orthogonal to $r_k - r_{k-1}$, which, as may be seen from \eqref{eq:AA-1-step-beta}, results in the acceleration coefficient $\beta_k = 0$, and, thus, the new AA(1) residual being nothing but the FP residual, $r_{k+1} = M r_k$.
From the upper bound in \eqref{eq:rk+1-lower-upper-bound} this means that AA(1) residuals satisfy $\Vert r_{k+1} \Vert / \Vert r_k \Vert < \sigma_{\max}(M)$ whenever $y_k \neq \cos \phi_k$ and $\Vert r_{k+1} \Vert / \Vert r_k \Vert \leq \sigma_{\max}(M)$ whenever $y_k = \cos \phi_k$.

From the plots shown in \cref{fig:calB_plot}, it is also clear that, depending on the values of $\phi_k$ and $y_k$, AA(1) may produce residuals $r_{k+1}$ satisfying $\Vert r_{k+1} \Vert / \Vert r_k \Vert \ll \sigma_{\max}(M)$.
For example, $\Vert r_{k+1} \Vert / \Vert r_k \Vert$ becomes arbitrarily small as $\phi_k \to 0$ (provided $y_k \not \to 1$ also) or as $\phi_k \to \pi$. 
In fact, ${\cal B}(0, y) = {\cal B}(\pi, y) = 0$, which means that if $r_k$ and $r_{k-1}$ point in the same or opposite directions to one another, and $r_k \neq r_{k-1}$, then AA(1) converges exactly on the next iteration, $r_{k+1} = 0$.

It is already known in the literature that AA(1) residuals satisfy the upper bound $\Vert r_{k+1} \Vert / \Vert r_k \Vert \leq \sigma_{\max}(M)$. 
For example, this follows as the $m=1$ special case of the AA($m$) result given by \cite[Thm. 2.1]{toth2015}, and it may also be derived as the special linear case of the AA(1) result given as \cite[Thm. 4.1]{evans2020proof}.
A key distinction between the results of \cref{thm:AA1-bounds} and \cref{cor:AA1-equality} and those from \cite{toth2015,evans2020proof} is that our new bounds are parametrized in terms of $y_k$ and $\phi_k$, revealing that the AA(1) residual norm $\Vert r_{k+1} \Vert$ is very strongly influenced by the relationship between $r_k$ and $r_{k-1}$---specifically, the angle between them and their relative magnitudes.
Furthermore, having both lower and upper bounds allows for a more complete understanding of the residual convergence of AA(1)---the upper bound in \eqref{eq:rk+1-lower-upper-bound} is larger than the lower bound by a factor of the condition number of $M$, so we know $\Vert r_{k+1} \Vert / \Vert r_k \Vert$ to within this factor.
Note that from \cite[Thm. 4.1]{evans2020proof} it may also be possible to develop insight into how the relationship between $r_k$ and $r_{k-1}$ influences $\Vert r_{k+1} \Vert$ by further examining the so-called gain $\theta_k$ (see \cite[Sec. 3.3]{evans2020proof}) that appears in that result; however, no lower bound is provided in that work, so it is not clear how tight the resulting upper bound may be.

Interestingly, similar to \cref{thm:AA1-bounds} and \cref{cor:AA1-equality}, some known convergence results for restarted GMRES also involve angles between residual vectors.
For example, for certain matrices, the norm of the GMRES($m$) residual at the end of the $(k+1)$st restart cycle equals the product of the norm of the residual at the end of the $k$th restart cycle and the cosine of the angle between the two residuals, as shown in \cite[Thm. 4]{Baker_etal_2005}.
Unfortunately, it is not immediately clear that existing theoretical insights on the convergence of restarted GMRES($m$), such as those described in \cite{Baker_etal_2005}, can be extended to describe the convergence of windowed AA($m$).

\Cref{subsec:linear-bounds} will illustrate numerically how the residual bounds derived in this section are relevant for understanding AA(1) convergence patterns.

} 

\subsection{AA(1) asymptotic convergence factor: scaling invariance of initial guess and finite convergence property}
\label{subsec:scaling_theory}
\btxt{As discussed in the next section, numerical results have shown that the asymptotic root-linear convergence factor of AA($m$) iteration sequences strongly depends on the initial guess, see \cite{LinearacAA}. The theoretical understanding of this is still very limited. We now apply some of the AA(1) results of this section to prove results on the influence of the AA(1) initial guess on the asymptotic convergence factor.

We first discuss a scaling invariance property of the initial guess $x_0$ for the AA(1) method in the linear case. Since solving $Ax=b$ is equivalent to solving $Ay=0$ with $y=x-A^{-1}b$, we formulate these scaling properties for the case of a homogenous system.}
\begin{proposition}\label{scaling-initial-guess-polynomial-invariant}
Consider solving $Ax=0$ using AA($m$) iteration \cref{eq:AA-iteration} with $m=1$ applied to linear iteration \cref{eq:linear-fixed-point} with $M=I-A$ and nonzero initial guess $x_0$. Consider the AA(1) polynomials in \cref{eq:AA1-three-term-M}, which depend on $x_0$ through the $\beta_k$. We have the following properties: for any nonzero scalar $\alpha$,

\begin{equation}\label{beta-invariant-scaling-guess}
\beta_k(x_0) =\beta_k(\alpha x_0),
\end{equation}
and, therefore,
\begin{equation}\label{pk-invariant-scaling-guess}
  p_{k}(\lambda,x_0) =p_{k}(\lambda,\alpha x_0),
\end{equation}
where we explicitly indicate the dependence  of $\beta_k$ and $p_k(\lambda)$ on the initial condition.
\end{proposition}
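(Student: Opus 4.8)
The plan is to prove both claims \eqref{beta-invariant-scaling-guess} and \eqref{pk-invariant-scaling-guess} simultaneously by induction on $k$, carrying along a third auxiliary statement that the residuals themselves scale linearly with $\alpha$. The key observation is that the AA(1) coefficient formula \eqref{eq:AA-1-step-beta} is a ratio of two expressions that are each homogeneous of degree two in the residuals, so that any common scaling of the residuals cancels, while the three-term recurrence \eqref{eq:AA1-three-term-M} for the polynomials is built only from the (now invariant) $\beta_k$ and the preceding polynomials.

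First I would record the behaviour of the base residuals under the scaling. In the homogeneous case $b = 0$ we have $r_0 = (I - M)x_0 = A x_0$, so replacing $x_0$ by $\alpha x_0$ gives $r_0(\alpha x_0) = \alpha\, r_0(x_0)$. Since $r_1 = M r_0$ from \cref{pro:polynomial-form2}, the first residual also scales as $r_1(\alpha x_0) = \alpha\, r_1(x_0)$. These provide the base of the induction.

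I would then set up the induction with the combined hypothesis that, for a fixed $\alpha \neq 0$, one has $r_j(\alpha x_0) = \alpha\, r_j(x_0)$ for all $j \le k$, together with $\beta_j(\alpha x_0) = \beta_j(x_0)$ and $p_j(\lambda, \alpha x_0) = p_j(\lambda, x_0)$ for all $j < k$. For the inductive step, I would apply \eqref{eq:AA-1-step-beta}: when $r_k \neq r_{k-1}$ the numerator $-r_k^T(r_k - r_{k-1})$ and the denominator $\Vert r_k - r_{k-1}\Vert^2$ both pick up a factor $\alpha^2$, so their quotient $\beta_k$ is unchanged; in the degenerate case $r_k = r_{k-1}$ one has $\beta_k = 0$, which is trivially invariant. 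The recurrence \eqref{eq:AA1-three-term-M}, namely $p_{k+1}(\lambda) = (1+\beta_k)\lambda p_k(\lambda) - \beta_k \lambda p_{k-1}(\lambda)$, then shows that $p_{k+1}(\lambda, \cdot)$ is invariant, since it is assembled from the invariant $\beta_k$ and the invariant polynomials $p_k$, $p_{k-1}$. Finally, $r_{k+1} = p_{k+1}(M) r_0$ from \eqref{eq:rk-polynomial-form} scales by $\alpha$ because $p_{k+1}$ is invariant while $r_0$ scales by $\alpha$, which closes the induction and establishes both claimed identities.

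The step requiring the most care is not any single computation but the bookkeeping of the induction hypothesis: the residual-scaling statement and the invariance statements are coupled, each feeding into the next iteration's version of the other, so they must be bundled into a single inductive claim rather than proved separately. I would also be careful to verify explicitly that the argument covers the degenerate case $r_k = r_{k-1}$, where $\beta_k = 0$ and $r_{k+1} = M r_k$ still scales correctly, so that no additional hypothesis on the nondegeneracy of the least-squares problem is needed.
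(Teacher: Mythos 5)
Your proof is correct and follows essentially the same route as the paper's: an induction in which the degree-two homogeneity of the numerator and denominator in \cref{eq:AA-1-step-beta} makes $\beta_k$ scaling-invariant, which then propagates through the recurrence \cref{eq:AA1-three-term-M} to the polynomials (the paper carries the residual scaling implicitly by writing $r_k(\alpha x_0)=p_k(M,\alpha x_0)A\alpha x_0$, whereas you make it an explicit third strand of the induction). Your explicit treatment of the degenerate case $r_k=r_{k-1}$, where $\beta_k=0$ on both sides since the degeneracy itself is scaling-invariant, is a small point of rigor the paper's proof omits.
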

\begin{proof}
We use induction. Note that
\begin{align*}
  r_0 &= Ax_0, \\
  r_1&= Mr_0=MAx_0, \\
  \beta_1(x_0)&= \frac{-r_1^T(r_1-r_0)}{(r_1-r_0)^T(r_1-r_0)}=\frac{-(MAx_0)^T(MA-A)x_0}{\left((MA-A)x_0\right)^T (MA-A)x_0}.
\end{align*}
Thus, when $k=1$, we have $\beta_1(x_0) =\beta_1(\alpha x_0)$. Since $p_1(\lambda)=\lambda$, it is obvious that $p_{1}(\lambda,x_0) =p_{1}(\lambda,\alpha x_0)$.

Assume that for $k\leq n$,  $p_{k}(\lambda,x_0) =p_{k}(\lambda,\alpha x_0)$. Note that for $k\leq n$,
\begin{align*}
  \beta_k(\alpha x_0) &=\frac{-r_k^T(r_k-r_{k-1})}{\|r_k-r_{k-1}\|^2},\\
   &=\frac{- \left(p_k(M,\alpha x_0) A\alpha x_0\right)^T \left(p_k(M,\alpha x_0)-p_{k-1}(M,\alpha x_0)\right)A\alpha x_0 }{\|\big(p_k(M,\alpha x_0)-p_{k-1}(M,\alpha x_0)\big)A\alpha x_0\|^2}, \\
   &=\beta_k(x_0).
\end{align*}
For $k=n+1$, we then have
\begin{align*}
  p_{n+1}(\lambda,\alpha x_0)&=(1+\beta_n(\alpha x_0))\lambda p_n(\lambda,\alpha x_0) - \beta_n(\alpha x_0) \lambda p_{n-1}(\lambda,\alpha x_0),\\
  & = (1+\beta_n(x_0))\lambda p_n(\lambda, x_0) - \beta_n( x_0) \lambda p_{n-1}(\lambda, x_0),\\
  &= p_{n+1}(\lambda, x_0),
\end{align*}
which completes the proof.
\end{proof}
The following result shows the invariance of the root-linear asymptotic convergence factor under scaling of the initial condition.
\begin{proposition}\label{pro:scaling-invariant-limit}
Consider solving $Ax=0$ using AA($m$) iteration \cref{eq:AA-iteration} with $m=1$ applied to linear iteration \cref{eq:linear-fixed-point} with $M=I-A$ and nonzero initial guess $x_0$. Then the AA(1) residuals defined in \cref{eq:rk-polynomial-form}  have the following property:
\begin{equation}
  \lim_{k\rightarrow \infty }\|r_k(x_0)\|^{\frac{1}{k}} = \lim_{k\rightarrow \infty } \|r_k( \alpha x_0)\|^{\frac{1}{k}},
\end{equation}
where  we explicitly indicate the dependence of $r_k$ on the initial condition.
\end{proposition}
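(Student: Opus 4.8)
The plan is to reduce everything to the polynomial invariance already established in \cref{scaling-initial-guess-polynomial-invariant}. That proposition shows that both the acceleration coefficients and the residual update polynomials are unchanged under scaling of the initial guess, i.e.\ $p_k(\lambda, x_0) = p_k(\lambda, \alpha x_0)$ for all $k$ and any nonzero $\alpha$. The key remaining observation is how the \emph{initial} residual transforms: since $r_0 = Ax_0$, scaling the initial guess simply scales the initial residual, $r_0(\alpha x_0) = A(\alpha x_0) = \alpha\, r_0(x_0)$.

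First I would combine these two facts with the polynomial residual representation \cref{eq:rk-polynomial-form}, $r_k(x_0) = p_k(M,x_0)\, r_0$. Because the polynomial is scaling-invariant and only the initial residual picks up the factor $\alpha$, this gives
\begin{equation*}
r_k(\alpha x_0) = p_k(M, \alpha x_0)\, r_0(\alpha x_0) = \alpha\, p_k(M, x_0)\, r_0(x_0) = \alpha\, r_k(x_0),
\end{equation*}
so the two residual sequences are identical up to the global scalar $\alpha$. Taking norms yields $\|r_k(\alpha x_0)\| = |\alpha|\,\|r_k(x_0)\|$ for every $k$.

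Finally I would take $k$-th roots, obtaining $\|r_k(\alpha x_0)\|^{1/k} = |\alpha|^{1/k}\,\|r_k(x_0)\|^{1/k}$, and pass to the limit using $|\alpha|^{1/k} \to 1$ as $k \to \infty$ for any nonzero $\alpha$. This shows the two root-linear convergence factors agree.

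There is no serious obstacle here: the entire argument rests on \cref{scaling-initial-guess-polynomial-invariant} plus the linearity $r_0 = Ax_0$. The only subtlety worth flagging is that the statement presupposes the limit exists; if one wishes to be careful about this, the same factorization $\|r_k(\alpha x_0)\|^{1/k} = |\alpha|^{1/k}\,\|r_k(x_0)\|^{1/k}$ applied to $\limsup$ and $\liminf$ shows immediately that the corresponding upper and lower root-linear factors for $x_0$ and $\alpha x_0$ coincide, since the perturbing factor $|\alpha|^{1/k}$ tends to one regardless.
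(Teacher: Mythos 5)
Your proof is correct and follows essentially the same route as the paper: invoke \cref{scaling-initial-guess-polynomial-invariant} for polynomial invariance, note $r_0(\alpha x_0)=\alpha\,r_0(x_0)$, conclude $r_k(\alpha x_0)=\alpha\,r_k(x_0)$, and let $|\alpha|^{1/k}\to 1$. If anything, your version is slightly more careful than the paper's, which writes $(\alpha)^{1/k}$ rather than $|\alpha|^{1/k}$ and does not address existence of the limit via $\limsup$/$\liminf$ as you do.
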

\begin{proof}
For initial guess $\alpha x_0$, we have $r_0(\alpha x_0)= A\alpha x_0=\alpha Ax_0=\alpha r_0(x_0)$. From \cref{scaling-initial-guess-polynomial-invariant}, we know that $p_k(\lambda,x_0)=p_k(\lambda,\alpha x_0)$. Furthermore,
\begin{align*}
 \lim_{k\rightarrow \infty } \|r_k( \alpha x_0)\|^{\frac{1}{k}} &=\lim_{k\rightarrow \infty } \|p_k(M,\alpha x_0)r_0(\alpha x_0))\|^{\frac{1}{k}},\\
 &= \lim_{k\rightarrow \infty } (\alpha)^{\frac{1}{k}}\|p_k(M,x_0)r_0(x_0))\|^{\frac{1}{k}},\\
 &=\lim_{k\rightarrow \infty }\|p_k(M,x_0)r_0(x_0))\|^{\frac{1}{k}},\\
 &= \lim_{k\rightarrow \infty } \|r_k( x_0)\|^{\frac{1}{k}}.
\end{align*}
\end{proof}

Next, we give a result on the number of iterations  in which  AA(1) converges exactly for a special choice of initial guess.
\begin{proposition}\label{AA1-2-iterations-eigevector}
Consider solving $Ax=b$ using AA($m$) iteration \cref{eq:AA-iteration} with $m=1$ applied to linear iteration \cref{eq:linear-fixed-point} with $M=I-A$. For initial guess $x_0=x^* + v$, where $Ax^*=b$ and $v$ is any eigenvector of $A$,  AA(1) converges to the true solution in at most two iterations, i.e., $x_2=x^*$.
\end{proposition}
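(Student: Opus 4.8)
The plan is to exploit the fact that, for this special initial guess, the first two AA(1) residuals are both confined to the one-dimensional eigenspace spanned by $v$, so that the skew-symmetric matrix driving the AA(1) residual update in \cref{AA1-rk-simple-form-randomx0x1} annihilates them. First I would compute the initial residual explicitly. Writing $Av = \mu v$ for the associated eigenvalue and using $r_0 = Ax_0 - b = A(x^* + v) - b = Av = \mu v$ (since $Ax^* = b$), we see that $r_0$ is a scalar multiple of $v$. Because $M = I - A$, the eigenvector $v$ of $A$ is also an eigenvector of $M$ with eigenvalue $1 - \mu$, so the relation $r_1 = M r_0$ gives $r_1 = \mu(1-\mu)\,v$, which is again a scalar multiple of $v$.

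The crucial observation is then that $r_0$ and $r_1$ are parallel, both lying along $v$. Next I would verify that $r_1 \neq r_0$, so that the first branch of \cref{AA1-rk-simple-form-randomx0x1} applies with $k=1$: equality would force $\mu = \mu(1-\mu)$, i.e.\ $\mu^2 = 0$, but $A$ is nonsingular so $\mu \neq 0$ (and $v \neq 0$), whence $r_1 \neq r_0$. Substituting the parallel residuals into the update formula, the two outer products coincide, $r_1 r_0^T = \mu^2(1-\mu)\,v v^T = r_0 r_1^T$, so the skew-symmetric factor $-r_1 r_0^T + (r_1 r_0^T)^T = -r_1 r_0^T + r_0 r_1^T$ vanishes identically. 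Therefore $r_2 = 0$, which is precisely the claim $x_2 = x^*$. Equivalently, this is the degenerate case $\phi_1 \in \{0,\pi\}$ of \cref{thm:AA1-bounds}, where ${\cal B}(\phi_1, y_1) = 0$ forces $\Vert r_2 \Vert = 0$, matching the observation made after that theorem that parallel (but unequal) consecutive residuals yield exact convergence on the next step.

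There is no serious obstacle here; the argument is a short direct calculation, and the only points requiring care are the degenerate eigenvalues. The nonsingularity of $A$ excludes $\mu = 0$ (which would make $v$ a null vector), guaranteeing $r_0 \neq 0$ and $r_1 \neq r_0$. The value $\mu = 1$ makes $1 - \mu = 0$, so that $r_1 = M r_0 = 0$ and AA(1) in fact converges already at the first iteration; this case is nonetheless captured uniformly by the same computation, since $r_1 = 0$ still makes both outer products vanish. Thus in every case $r_2 = 0$, and AA(1) reaches the true solution $x^*$ in at most two iterations.
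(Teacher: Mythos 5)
Your proof is correct and takes essentially the same route as the paper's: compute $r_0 = \mu v$ and $r_1 = Mr_0 = \mu(1-\mu)v$, observe that $r_1 r_0^T = \mu^2(1-\mu)vv^T$ is symmetric so the skew-symmetric factor $-r_1r_0^T + (r_1r_0^T)^T$ in \cref{AA1-rk-simple-form-randomx0x1} vanishes, and conclude $r_2 = 0$. Your explicit check that $r_1 \neq r_0$ (so that the first branch of \cref{AA1-rk-simple-form-randomx0x1} applies, using nonsingularity of $A$ to exclude $\mu = 0$) is a point the paper's proof leaves implicit, but this is a refinement of the same argument rather than a different approach.
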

\begin{proof}
Assume that $A v =\mu v$ with $v\neq 0$. Note that $r_0 = Ax_0-b=A(v+x^*)-b=Av=\mu v$ and  $r_1=Mr_0= \mu Mv$.  Since
\begin{equation*}
r_1r_0^T = \mu M v (\mu v)^T = \mu^2(I-A)v v^T =\mu^2(1-\mu)v v^T,
\end{equation*}
we have
\begin{equation*}
-r_1r_0^T + (r_1r_0^T)^T =0.
\end{equation*}
From  \cref{AA1-rk-simple-form-randomx0x1} we then have $r_2=0$, which means that $x_2=x^*$.
\end{proof}

\rtxt{
Note that \cref{AA1-2-iterations-eigevector} can also be derived from \eqref{eq:rk+1-lower-upper-bound} in \cref{thm:AA1-bounds}.
That is, as described in the proof of \cref{AA1-2-iterations-eigevector}, the first two AA(1) residuals are $r_0 = \mu v$, and $r_1 = M r_0 = (1 - \mu) r_0$, and, are thus parallel but not equal (so long as $\mu \neq 1$). It follows immediately from \eqref{eq:rk+1-lower-upper-bound} that $r_2 = 0$, since ${\cal B}(0, y) = {\cal B}(\pi, y) = 0$. 
}

\btxt{The numerical results in the next section will illustrate these properties.}

\section{Numerical Results}\label{sec:Numerical-result}
\btxt{
In this section, we illustrate numerically how the theoretical results of this paper help in understanding AA(1) convergence. We first discuss how the theoretical results from \Cref{subsec:scaling_theory}, on scaling invariance of the initial guess for asymptotic convergence speed and on finite convergence properties, lead to improved understanding of some intricate AA(1) convergence patterns that are observed in numerical results. We also give numerical illustrations of the AA(1) residual polynomials that were presented in \Cref{sec:AAm-krylov}, compared to the fixed-point polynomials. 
\rtxt{
We then perform a numerical study of the new AA(1) residual bounds that were presented in \Cref{sec:AA1-bounds}, showing how the per-iteration residual reduction is strongly influenced by the relationship between the two previous residual vectors, as is implied by \cref{thm:AA1-bounds}. 
}
This is followed by a brief discussion of some numerical stability considerations when solving the AA least-squares problems, related to regularization schemes that have been proposed in the recent literature.
Finally, we broaden our view and investigate how the asymptotic convergence behavior we observe for AA(1) in the case of linear problems extends to a nonlinear problem. In our numerical tests we use Matlab's $QR$ factorization code to solve least-squares problem (\ref{eq:Andersonbetas}) in a robust manner.}

We first introduce some convergence terminology for our discussion, see, e.g., \cite{LinearacAA}.
\begin{definition}[$r$-linear convergence]
Let $\{x_k\}$  be any sequence that converges to $x^*$. Define
\begin{equation*}
  \rho_{\{x_k\}} = \limsup\limits_{k\rightarrow \infty}\|x_k-x^*\|^{\frac{1}{k}}.
\end{equation*}
We say $\{x_k\}$ converges $r$-linearly with $r$-linear convergence factor $\rho_{\{x_k\}}$ if $\rho_{\{x_k\}}\in(0,1)$ and $r$-superlinearly if $\rho_{\{x_k\}}=0$. The ``r-'' prefix stands for ``root''.
\end{definition}

\begin{definition}[r-linear convergence of a fixed-point iteration]\label{def:rho-method}
Consider fixed-point iteration $x_{k+1}=q(x_k)$. We define the set of iteration sequences that converge to a given fixed point $x^*$ as
\begin{equation*}
C(q, x^*)=\Big\{  \{x_k\}_{k=0}^{\infty} | \quad x_{k+1} = q(x_k) \textrm{ for } k=0,1,\ldots, \textrm{ and } \lim_{k \rightarrow \infty}x_k=x^*\Big\},
\end{equation*}
and the worst-case r-linear convergence factor over $C(q, x^*)$ is defined as
\begin{equation}\label{eq:r-factor-defi}
  \rho_{q,x^*} = \sup \Big\{ \rho_{\{x_k\}} |\quad  \{x_k\}\in C(q, x^*) \Big\}.
\end{equation}
We say that the FP method converges r-linearly to $x^*$ with r-linear convergence factor $\rho_{q,x^*}$ if $\rho_{q,x^*} \in(0,1)$.
\end{definition}

We define the root-averaged error sequence of $\{x_k\}$ converging to $x^*$ as
\begin{equation}\label{eq:def_numer-r-convergence}
  \sigma_k = \|x_k-x^*\|_2^{\frac{1}{k}}.
\end{equation}

Two of the numerical tests we consider in this section were previously discussed in \cite{LinearacAA}, which is a companion paper to this paper that discusses continuity and differentiability of the iteration function of AA($m$) viewed as a fixed-point method. Paper \cite{LinearacAA} identifies and sheds interesting light on AA($m$) convergence patterns  for these problems, including oscillating behavior of $\beta_k$ as $k\rightarrow \infty$, and strong dependence of the root-linear asymptotic convergence factors of AA($m$) sequences $\{x_k\}$ on the initial guess. Here we further explain some of these observations using the theoretical results from this paper.
\subsection{AA(1) for a linear system}\label{subsec:linear-problem}

In our numerical tests, we first consider a linear example:
\begin{problem}\label{prob:linear2x2-upper-tri}
Consider linear FP iteration function
\begin{equation}\label{eq:q-linear-2x2}
  q(x)  =Mx=\begin{bmatrix}
  \frac{2}{3} & \frac{1}{4}\\
  0  & \frac{1}{3}
  \end{bmatrix}
  x,
\end{equation}
with fixed point  $x^*  = (0,0)^T$ (see also \cite{LinearacAA}).
\end{problem}

\cref{AA-Linear-plot} shows $\sigma_k$ for FP and AA(1), where we use three different initial guesses, $(0.2,0.3)^T, \ (0.2,1)^T$, and $(0.2,-0.2)^T$  for both FP and AA(1). Note that for FP, $\lim_{k \rightarrow\infty }\sigma_k =\rho_{q,x^*}=\rho(q'(x^*))=\frac{2}{3}$. However, for AA(1), $\lim_{k \rightarrow \infty }\sigma_k =\rho_{\{x_k\}}$ depends on the initial condition.

\begin{figure}[h]
\centering
\includegraphics[width=.6\textwidth]{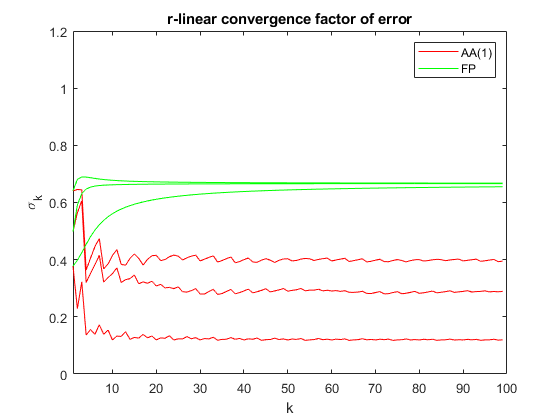}
\caption{\cref{prob:linear2x2-upper-tri} (linear). Root-averaged error $\sigma_k$ as a function of iteration number $k$ for different initial guesses.} \label{AA-Linear-plot}
\end{figure}

\begin{figure}[h]
\centering
\includegraphics[width=.49\textwidth]{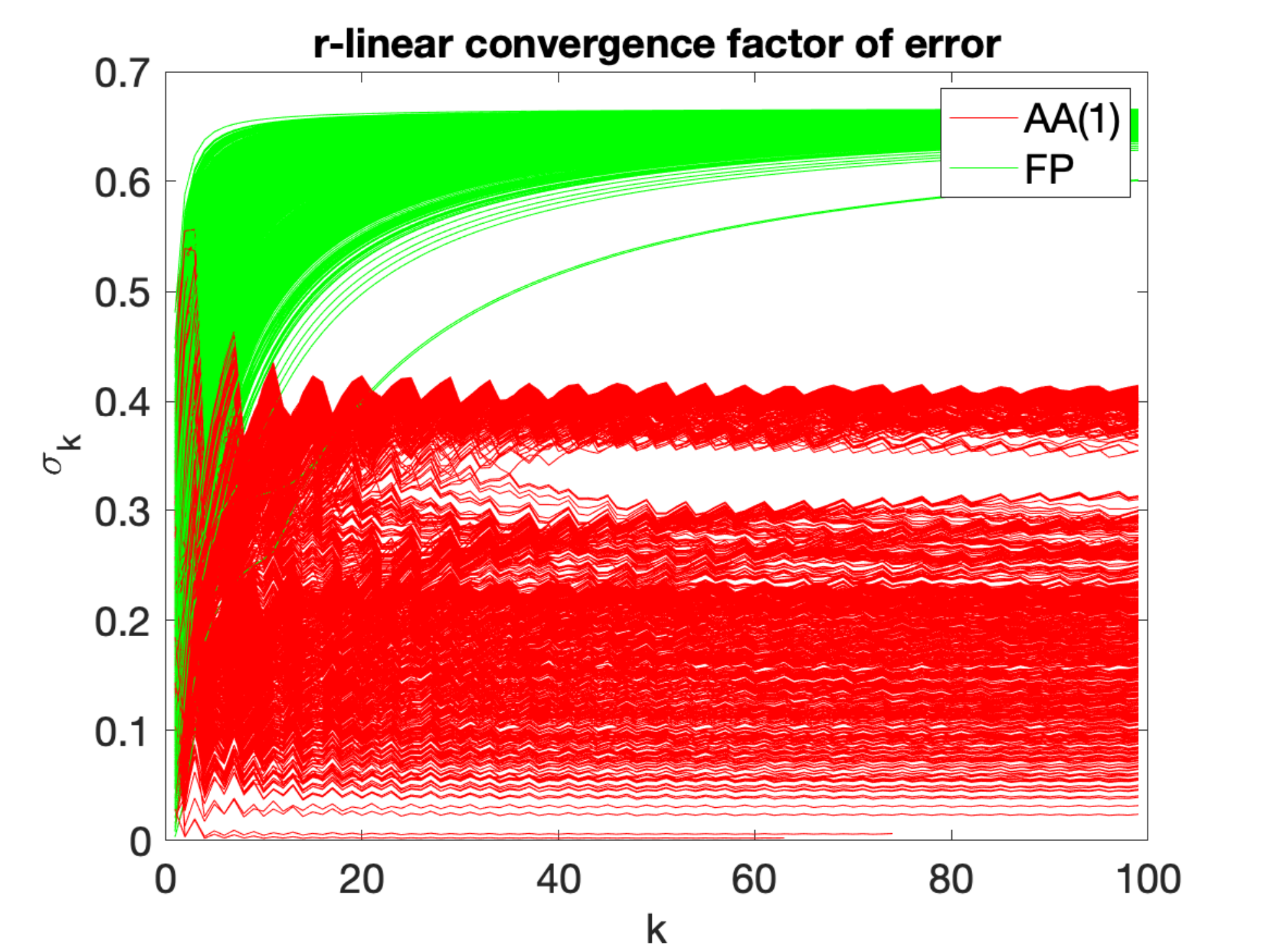}
\includegraphics[width=.49\textwidth]{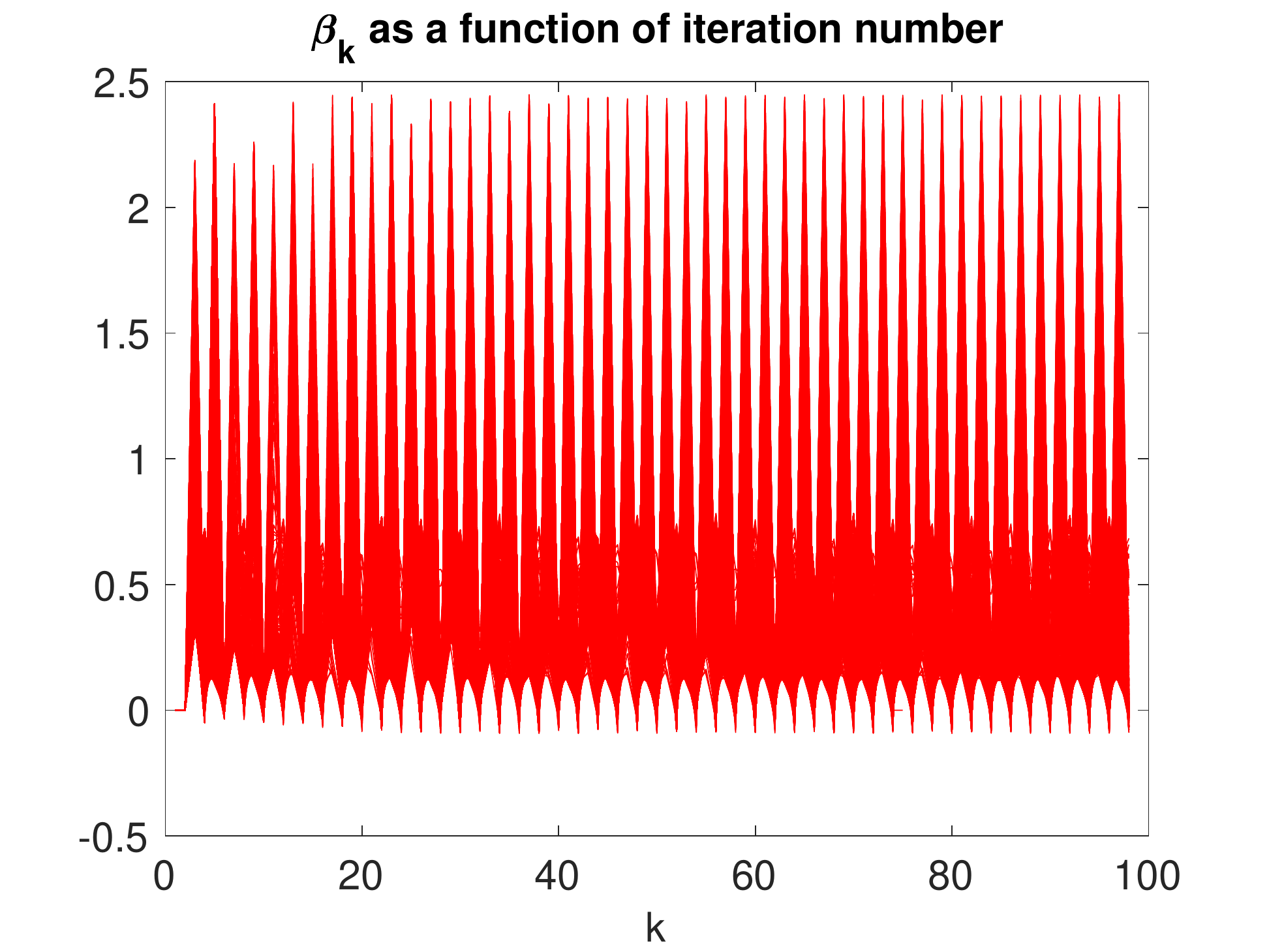}
\caption{Monte Carlo tests for \cref{prob:linear2x2-upper-tri} (linear). (Results from \cite{LinearacAA}.)} \label{prob2-mc}
\end{figure}

Monte Carlo results with a large number of random initial guesses in $[-1,1]^2$ for \cref{prob:linear2x2-upper-tri} with  FP and AA(1) iterations  are shown in  \cref{prob2-mc} (see also \cite{LinearacAA}). \cref{prob2-mc} indicates that AA(1) sequences $\{x_k\}$ converge $r$-linearly. However, the $r$-linear convergence factors $\rho_{\{x_k\}}$ strongly depend on the initial guess on a set of nonzero measure (see \cite{LinearacAA} for more discussion). There seems to be a least upper bound $\rho_{AA(1),x^*}$ for $\rho_{\{x_k\}}$  for the AA(1) iteration \cref{eq:anderson-1-step} that is smaller than the $r$-linear convergence factor $\rho_{q,x^*}=2/3$ of fixed-point iteration \cref{eq:fixed-point} by itself, but as far as we know, there are no theoretical results that allow us to compute $\rho_{AA(1),x^*}$ and to show it is smaller than $\rho_{q,x^*}$. Furthermore, we note that the $\beta_k$ sequences oscillate for this linear problem when the AA(1) iteration approaches $x^*$, and $\beta_k>-1$, which is consistent with our \cref{thm:lower-boound-betak}.

\begin{figure}[h]
\centering
\includegraphics[width=.7\textwidth]{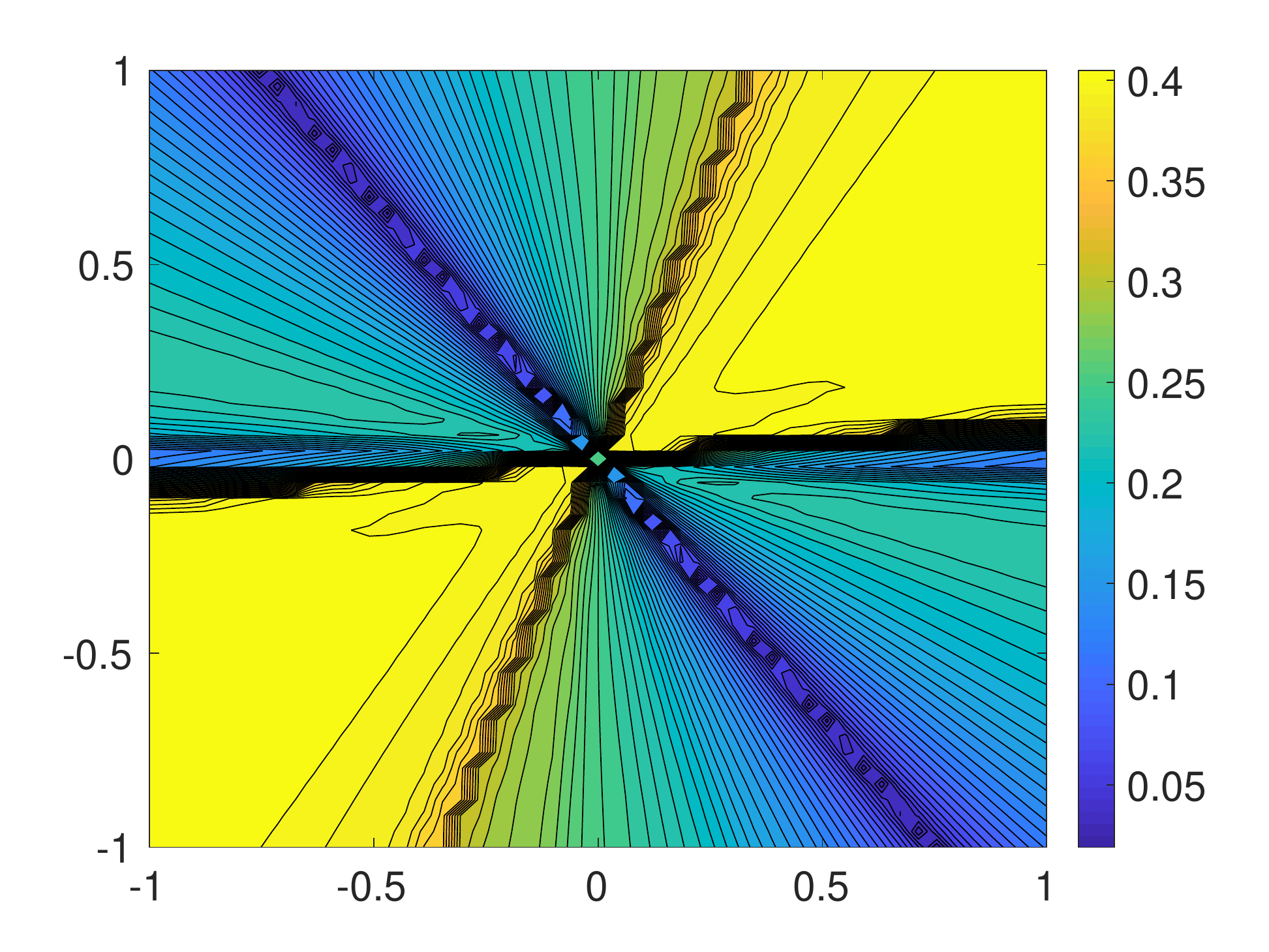}
\includegraphics[width=.7\textwidth]{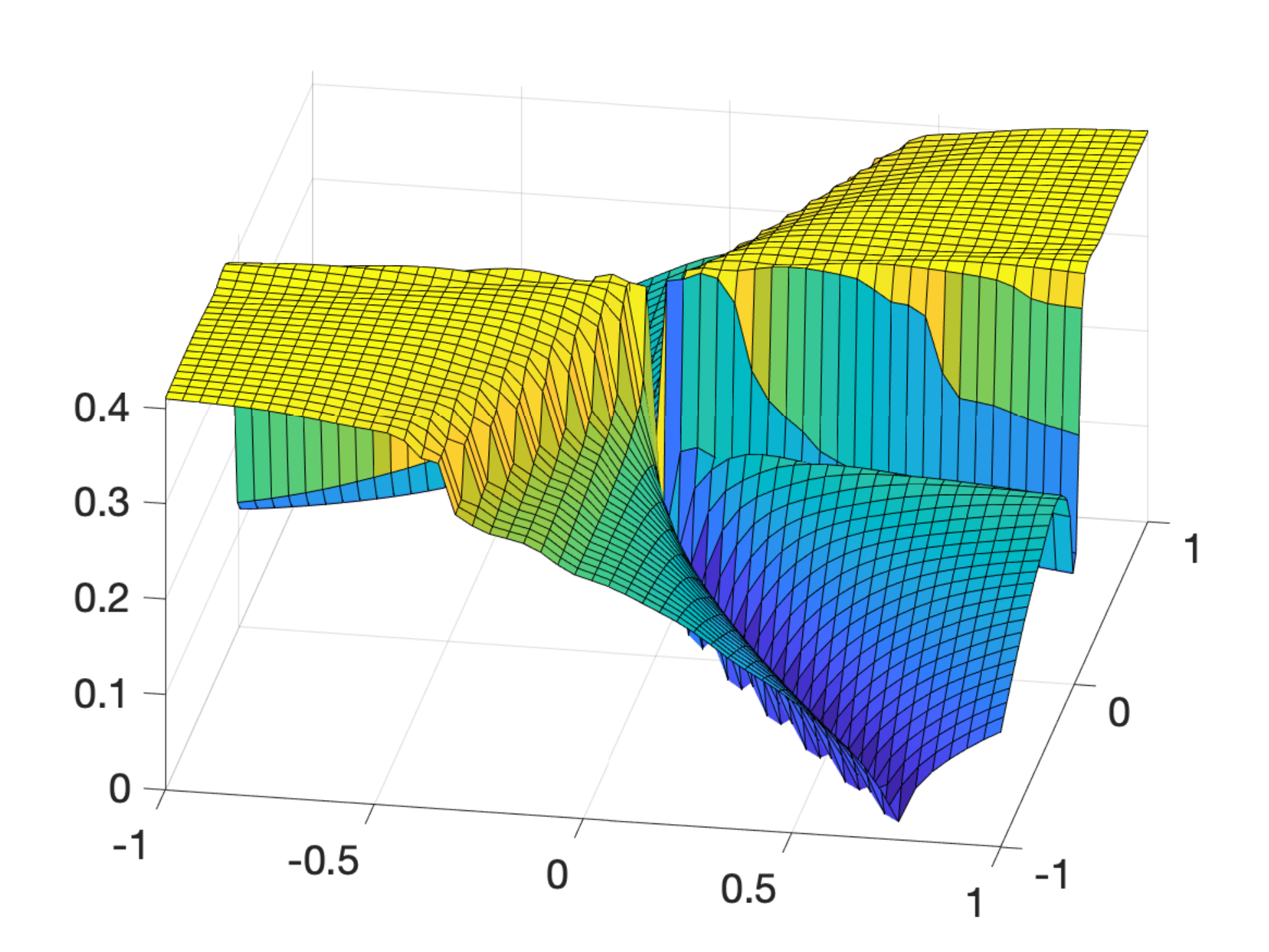}
\caption{Asymptotic convergence factor ($\sigma_k$ for large $k$) as a function of initial condition for \cref{prob:linear2x2-upper-tri} (linear).} \label{prob2-grid}
\end{figure}

In \cref{prob2-grid} we show the convergence factor of AA(1) applied to the linear \cref{prob:linear2x2-upper-tri} for different initial guesses $x_0$ on a regular grid with 50 by 50 points and $x_1=q(x_0)$. Again, we see  that  a least upper bound $\rho_{AA(1),x^*}$ for $\rho_{\{x_k\}}$ exists for the AA(1) iteration \cref{eq:anderson-1-step}. We see, however, a clear pattern in the numerically determined asymptotic convergence factors for the AA(1) sequences $\{x_k\}$, with radial invariance that we investigate further below. We also see in \cref{prob2-grid} that the apparent gap  in the convergence factor spectrum in \cref{prob2-mc} corresponds to a steep, possibly discontinuous jump in the convergence factor surface in the initial condition plane.

The radial invariance in  \cref{prob2-grid} validates our \cref{scaling-initial-guess-polynomial-invariant,pro:scaling-invariant-limit}.
To investigate the radial  dependence in \cref{prob2-grid} further, we take $x_0=(\cos(\theta),\sin(\theta))^T$ and for AA(1), and plot in \cref{prob2-theta} the numerical convergence factor for large $k$ as a function of $\theta$.

\begin{figure}[h]
\centering
\includegraphics[width=.6\textwidth]{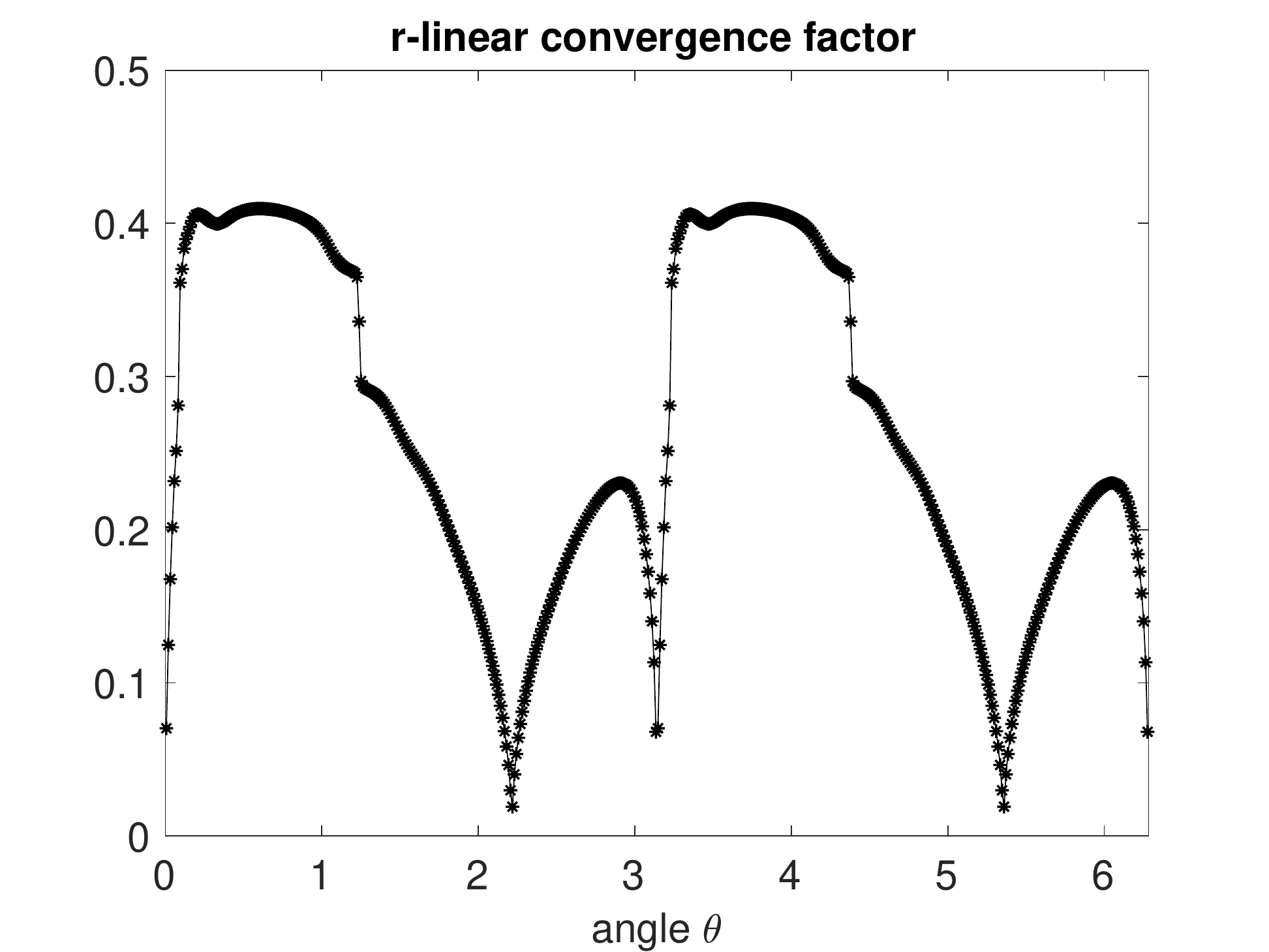}
\caption{Asymptotic convergence factor as a function of initial condition angle $\theta$ for \cref{prob:linear2x2-upper-tri} (linear).} \label{prob2-theta}
\end{figure}
%
%

Note that, for \cref{prob:linear2x2-upper-tri}, $A=I-M$ is given by
\begin{equation*}
  A = \begin{bmatrix}
  \frac{1}{3} & -\frac{1}{4}\\
  0 & \frac{2}{3}
  \end{bmatrix}.
\end{equation*}
%
The largest eigenvalue of $A$ is $\frac{2}{3}$ with eigenvector $v_{\max}=(1,-4/3)^T$, and $v_{\min}=(1,0)^T$ corresponds to the smallest eigenvalue $\frac{1}{3}$. \cref{prob2-grid,prob2-theta} also validate \cref{AA1-2-iterations-eigevector}: the convergence factors decay to 0 for initial conditions in the direction of the eigenvectors of $A$, corresponding to exact convergence in two steps.

\btxt{Next, we consider the standard AA(1) scheme of iteration \cref{eq:AA-iteration} for $m=1$ with single initial guess $x_0$ and $x_1=q(x_0)$, and compare it with the non-standard version (\ref{eq:general-guess-AAm}) with $m=1$ that uses two initial guesses $x_0$ and $x_1$. We want to explore whether there can be an advantage of the latter approach in terms of the asymptotic convergence factor. To this end, we we consider the non-standard AA(1) scheme with $x_0=(\cos \theta_1, \sin \theta_1)^T$ and $x_1= \alpha \, (\cos \theta_2, \sin \theta_2)^T$, and compute the asymptotic convergence factor in a grid search over $50^3$ values for $\theta_1$, $\theta_2$ and $\alpha$ for \cref{prob:linear2x2-upper-tri}, see the left panel of \cref{prob2-x0-x1}. This is compared with a grid search for the standard AA(1) scheme with $x_0=(\cos \theta_1, \sin \theta_1)^T$ and $x_1=q(x_0)$, as for \cref{prob2-grid,prob2-theta}, see the right panel of \cref{prob2-x0-x1}.}

\begin{figure}[h]
\centering
\includegraphics[width=.49\textwidth]{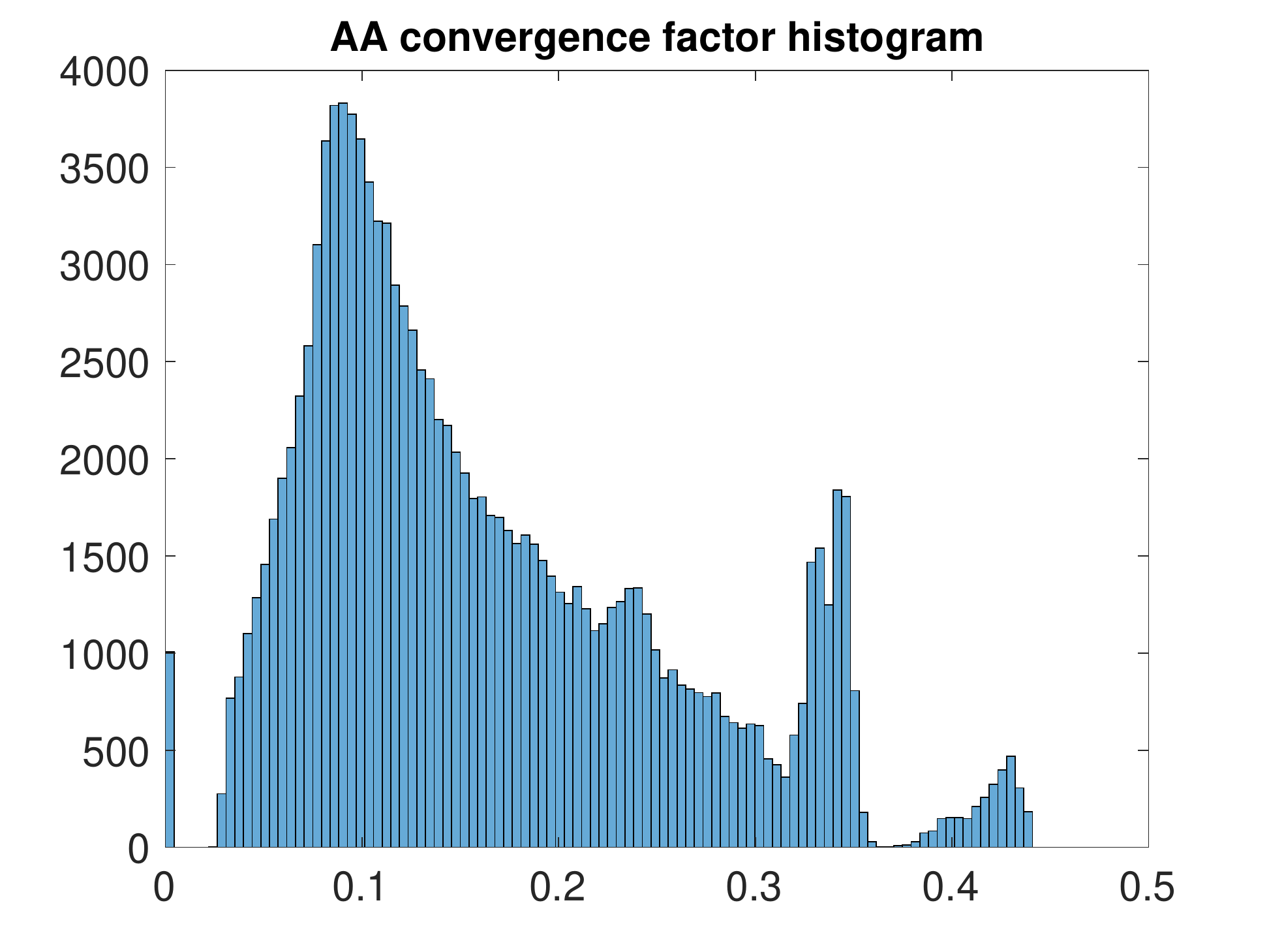}
\includegraphics[width=.49\textwidth]{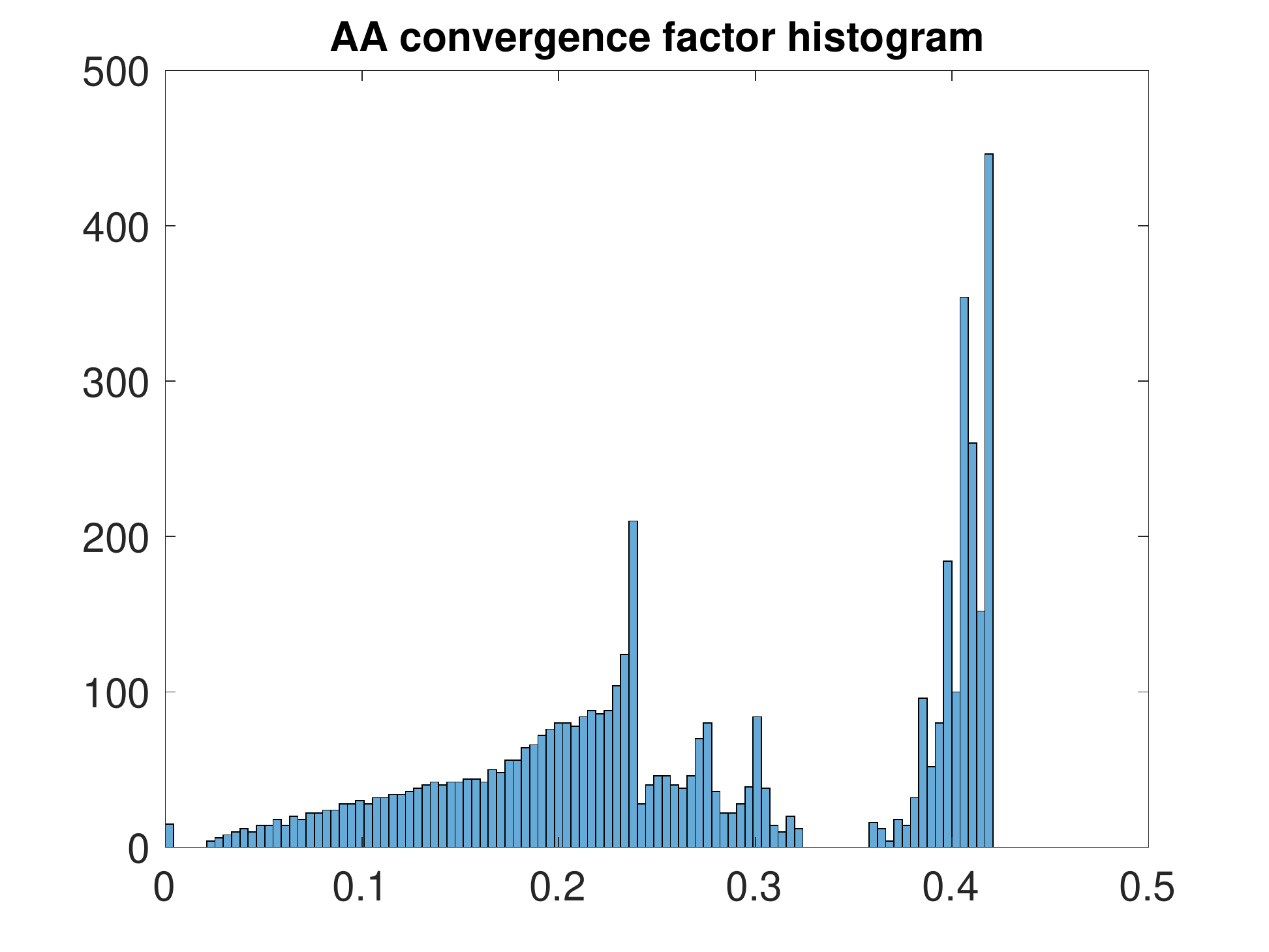}
\caption{\cref{prob:linear2x2-upper-tri} (linear). Left: Histogram of asymptotic convergence factors for $50^3=125,000$ initial condition pairs
$x_0=(\cos \theta_1, \sin \theta_1)^T$ and $x_1= \alpha \, (\cos \theta_2, \sin \theta_2)^T$, where $\theta_1$ and $\theta_2$ take on 50 equally spaced values $2k\pi/50$, $k=0, 1,\ldots,49$, and $\alpha$ takes on 50 equally spaced values
$10k/50$, $k=1, \ldots,50$.
The largest convergence factor value is $\approx$ 0.440, which is slightly greater than the largest value, 0.410, in Fig.\ \ref{prob2-theta} where $x_0=(\cos \theta, \sin \theta)$ and $x_1=Mx_0$. Right:   Histogram of asymptotic convergence factors for 5,000 initial condition
$x_0=(\cos \theta_1, \sin \theta_1)$ and $x_1=q(x_0)$, as in \cref{prob2-mc,prob2-grid,prob2-theta}.} \label{prob2-x0-x1}
\end{figure}
For the choice where  $x_0=(\cos \theta_1, \sin \theta_1)^T$ and $x_1= \alpha \, (\cos \theta_2, \sin \theta_2)^T$ with 125,000 initial condition pairs, the average convergence factor is  0.164131496689178.
For the choice where $x_0=(\cos \theta_1, \sin \theta_1)^T$ and $x_1= q(x_0)$ with 5,000   initial guesses, the average  convergence factor is 0.273090075503153. Note that increasing the number of initial condition points does not have  a  big influence on these results.  It is clear that the first choice gives a better average convergence factor. So while $x_1=q(x_0)$ appears to give a slightly  better worst-case performance, the average convergence factor appears to be better when $x_1$ is chosen randomly as in the experiment, independent of $x_0$.

\begin{figure}[h]
\centering
\includegraphics[width=.49\textwidth]{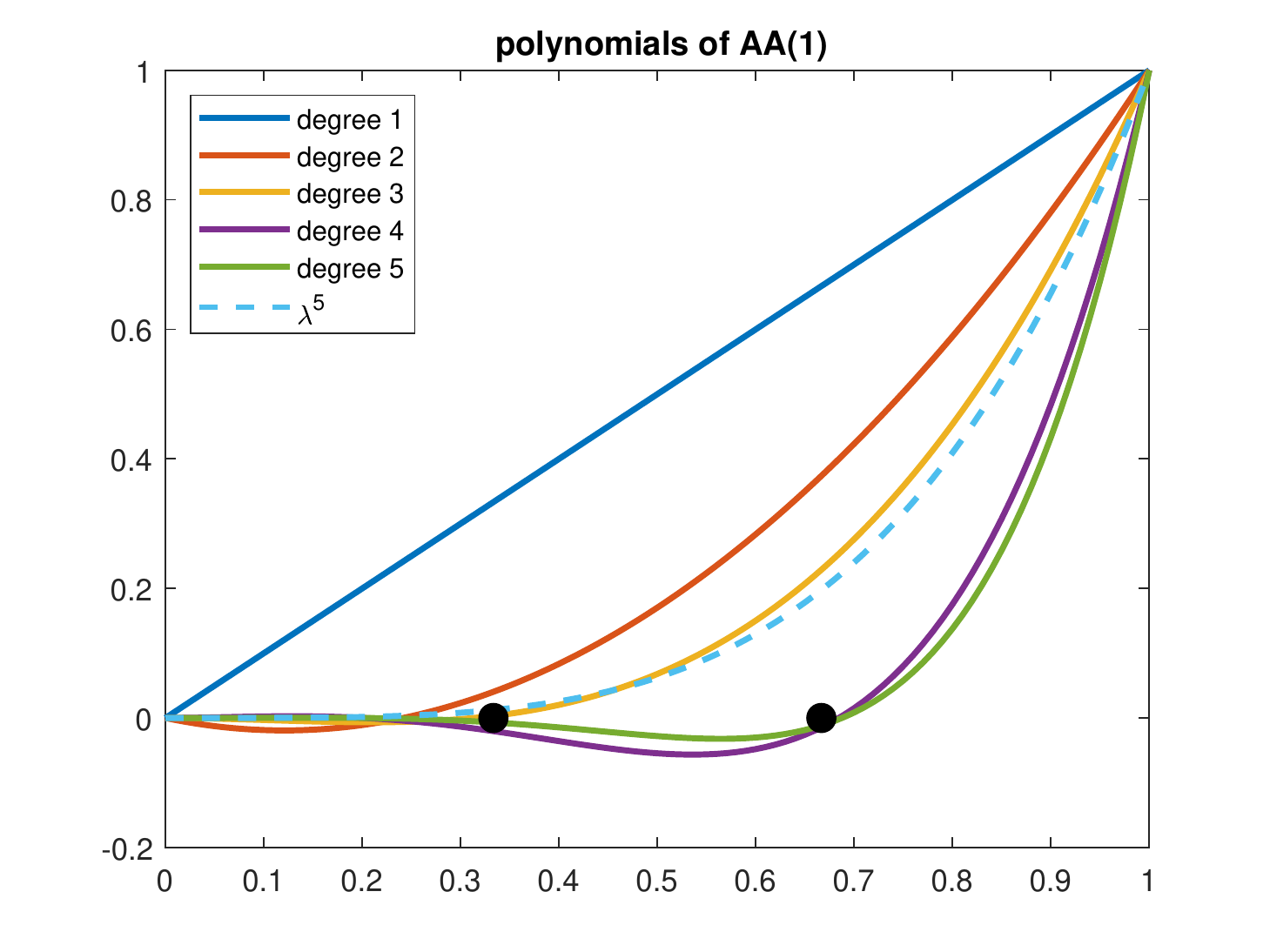}
\includegraphics[width=.49\textwidth]{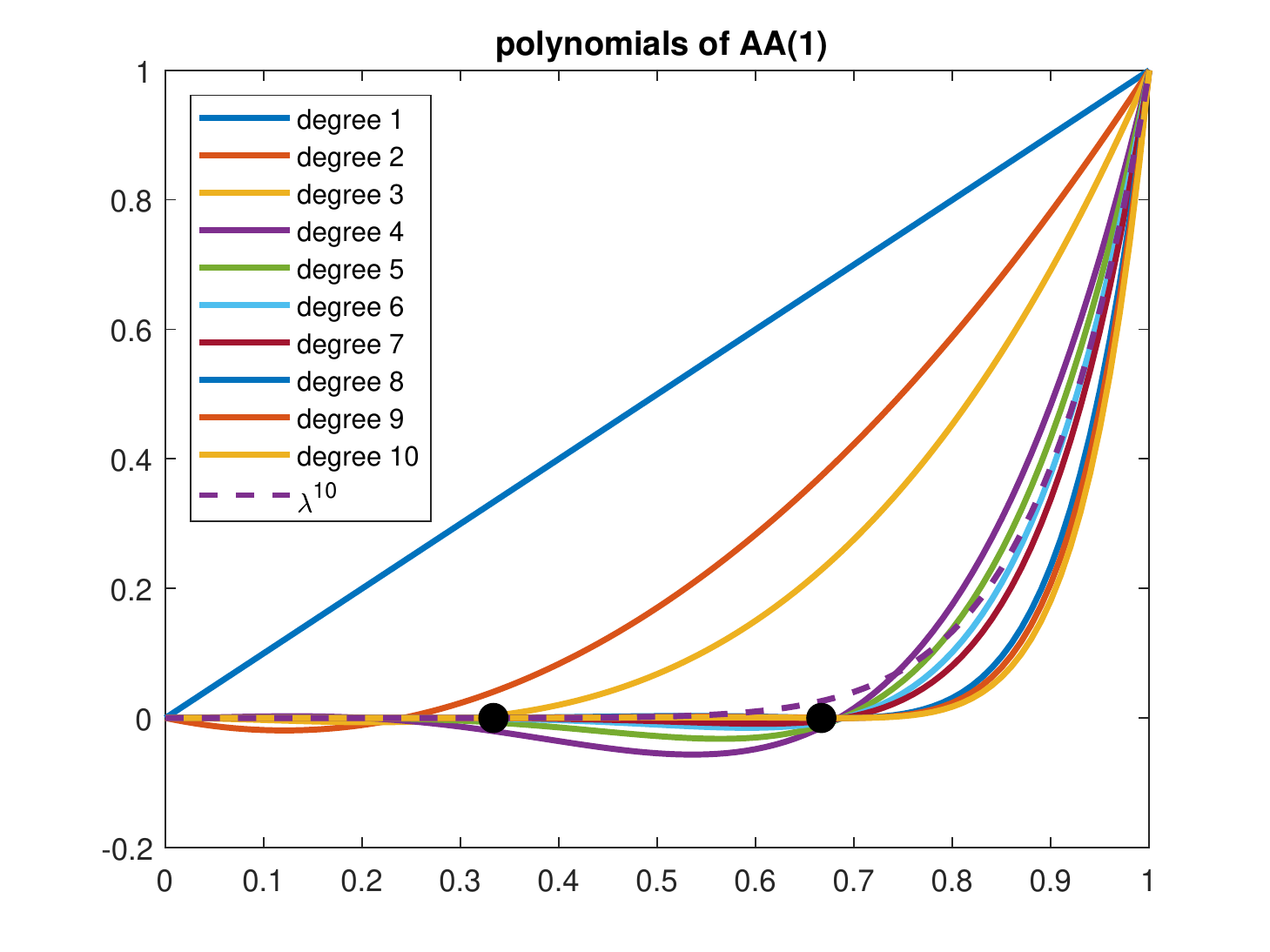}
\caption{Residual polynomials of AA(1) for \cref{prob:linear2x2-upper-tri} (linear).} \label{prob2-poly}
\end{figure}

%
%
Finally, we investigate the polynomials \cref{eq:AA1-three-term-M} for the residual of AA(1) with $x_1=q(x_0)$ applied to  \cref{prob:linear2x2-upper-tri}.   It is easy to show that the residual $r_k$ for fixed-point iteration $x_{k+1}=q(x_k)$ in the linear case is given by $r_k=M^k r_0$. We take initial guess $x_0=(0.2,0.3)^T$. \cref{prob2-poly} presents the AA(1) residual polynomials from \cref{eq:rk-polynomial-form}, and $\lambda^k$  for the fixed-point method.  The filled circles are the two eigenvalues $\lambda=\frac{1}{3}, \frac{2}{3}$ of matrix $M$.  We can see that $p_5(\lambda)$ of AA(1) is a better polynomial than $\lambda^5$ at the eigenvalues of $M$, and $p_{10}(\lambda)$ of AA(1) is a better  polynomial than $\lambda^{10}$ at the eigenvalues of $M$, illustrating the mechanism by which AA(1) accelerates the convergence of the FP method.

\rtxt{
\subsection{AA(1) residual bounds for linear systems}\label{subsec:linear-bounds}

To investigate the AA(1) residual bounds from \Cref{sec:AA1-bounds}, we now consider a second linear example:
\begin{problem}\label{prob:linear2x2-diag}
Consider linear FP iteration function
\begin{equation}\label{eq:q-linear-2x2-diag}
  q(x)  =Mx=\begin{bmatrix}
  0.5784 & 0\\
  0  & 0.999
  \end{bmatrix}
  x,
\end{equation}
with fixed point  $x^*  = (0, 0)^T$. 
\end{problem}
%
We consider the initial guess $x_0  = (0.0001,0.3023)^T$ (this was chosen arbitrarily), and show in \cref{fig:rnorm_FP_vs_AA1} the residual histories for both the basic FP iteration \eqref{eq:fixed-point} and the AA($m$) iteration \eqref{eq:AA-iteration} with $m = 1$ applied to this problem.
Evidently, the FP iteration converges excruciatingly slowly, which is to be expected because the asymptotic convergence factor of the method $\rho(M) =0.999$ is only very slightly less than unity.
In stark contrast, the averaged root-linear convergence speed of AA(1) is much faster, even though locally AA(1) can exhibit convergence rates, in a quasi-periodic fashion, which are seemingly no faster than those of the FP iteration. 
This quasi-periodic behaviour is somewhat reminiscent of quasi-periodic convergence patterns that can be observed for restarted GMRES($m$) applied to symmetric matrices, but that are only partially understood \cite{Baker_etal_2005}.

\begin{figure}[h]
\centerline{
\includegraphics[scale=0.36]{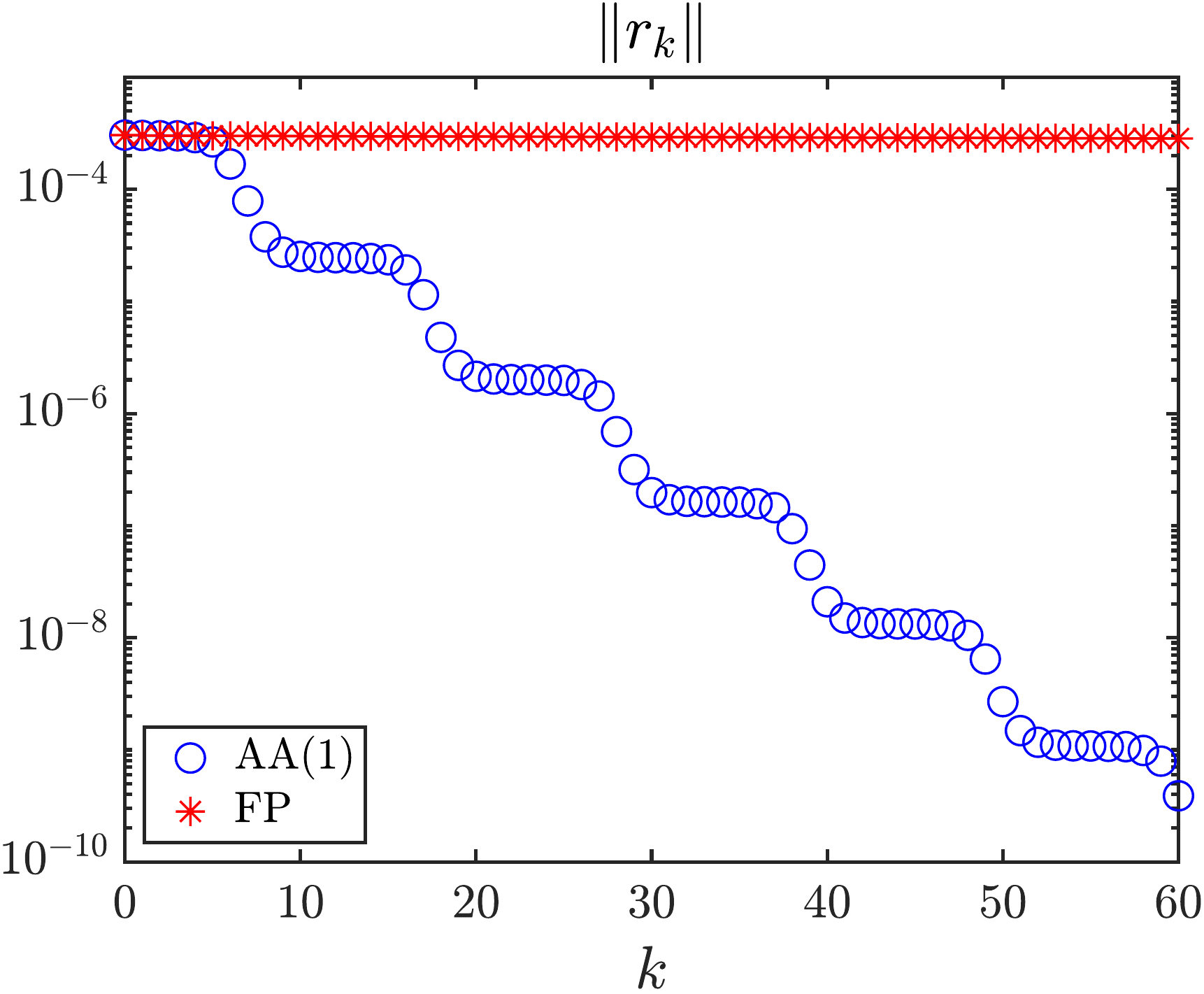}
\quad
\includegraphics[scale=0.36]{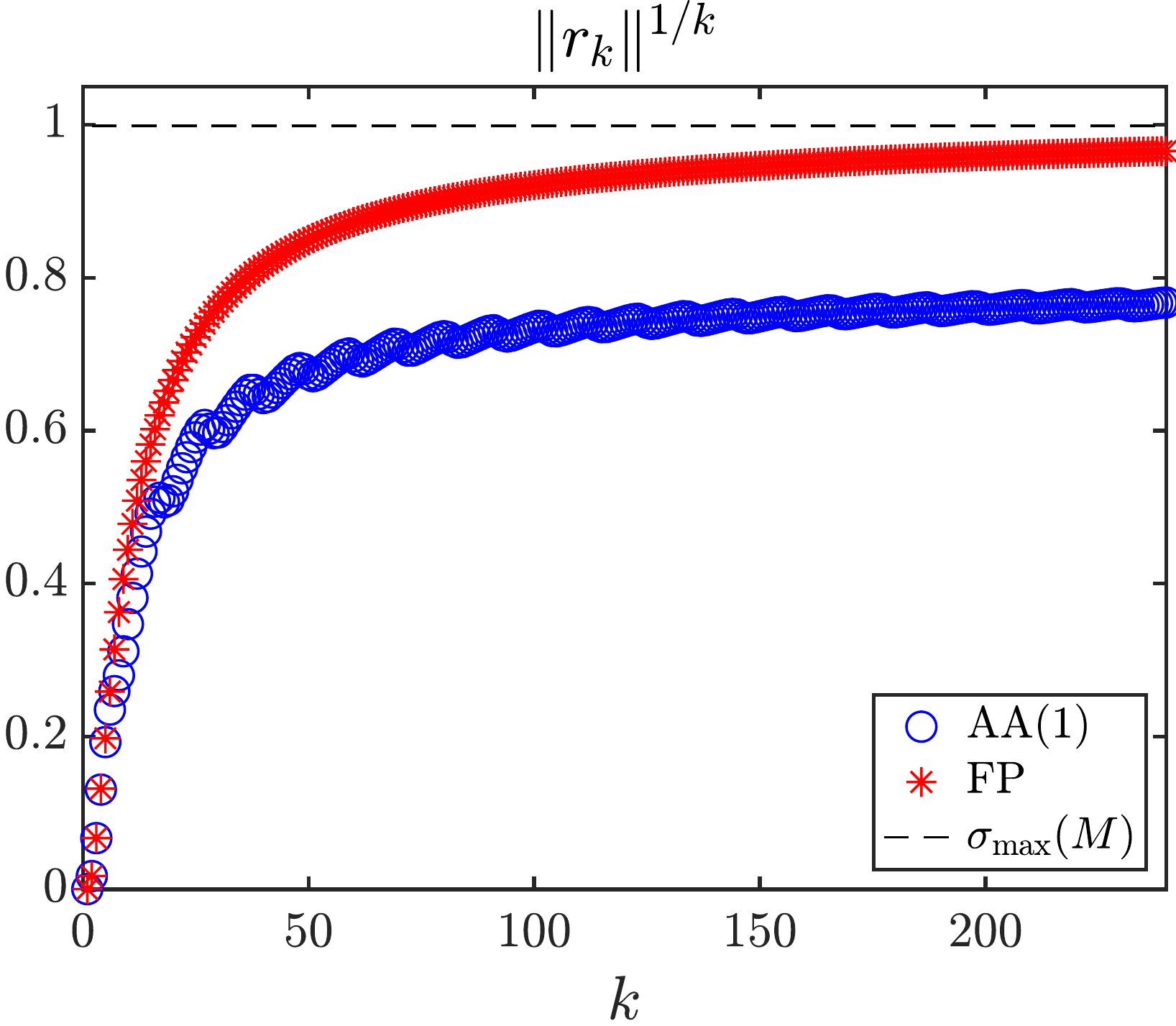}
}
\caption{Residual history for the basic FP method \eqref{eq:fixed-point} and the AA(1) method applied to \cref{prob:linear2x2-diag}. 
At left, the norm of the residual as a function of iteration index $k \in [0,60]$.
At right, the averaged root-linear residual convergence factor for iteration index $k \in [1, 240]$; note that $\sigma_{\max}(M) = 0.9990$ is shown for reference.
%
%
\label{fig:rnorm_FP_vs_AA1}
}
\end{figure}

\begin{figure}[h]
\centerline{
\includegraphics[scale=0.35]{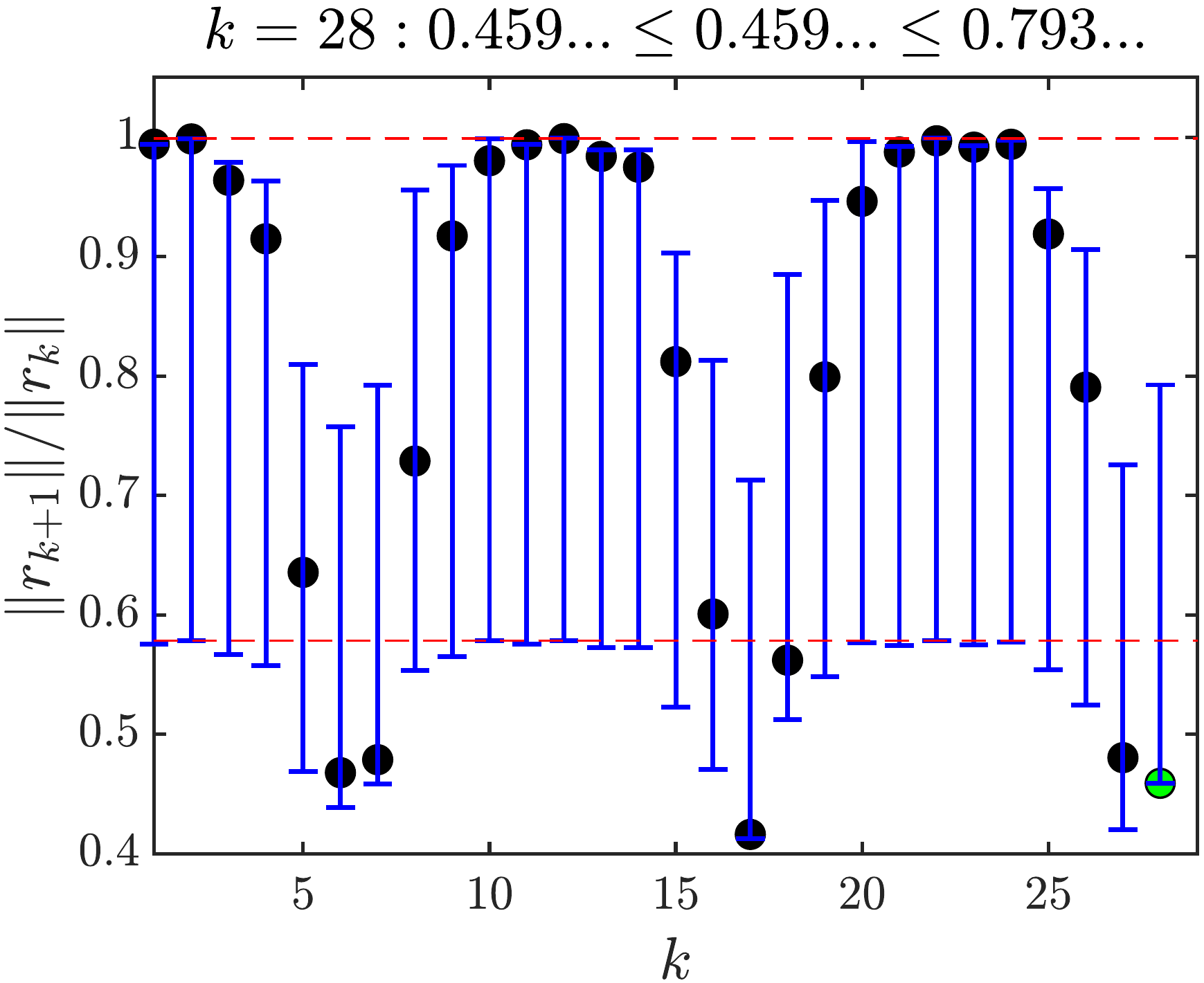}
\quad
\includegraphics[scale=0.35]{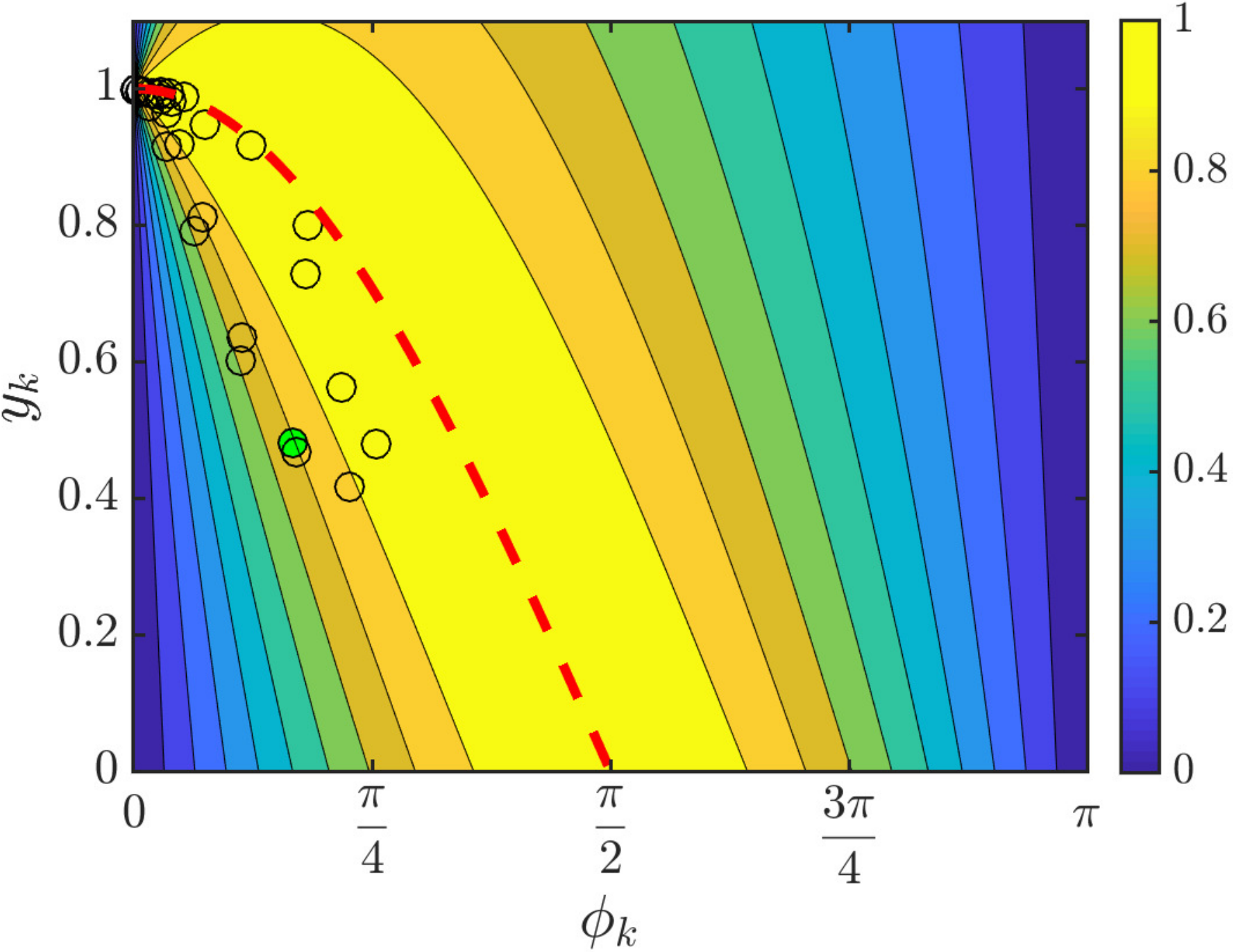}
}
\vspace{2ex}
\centerline{
\includegraphics[scale=0.35]{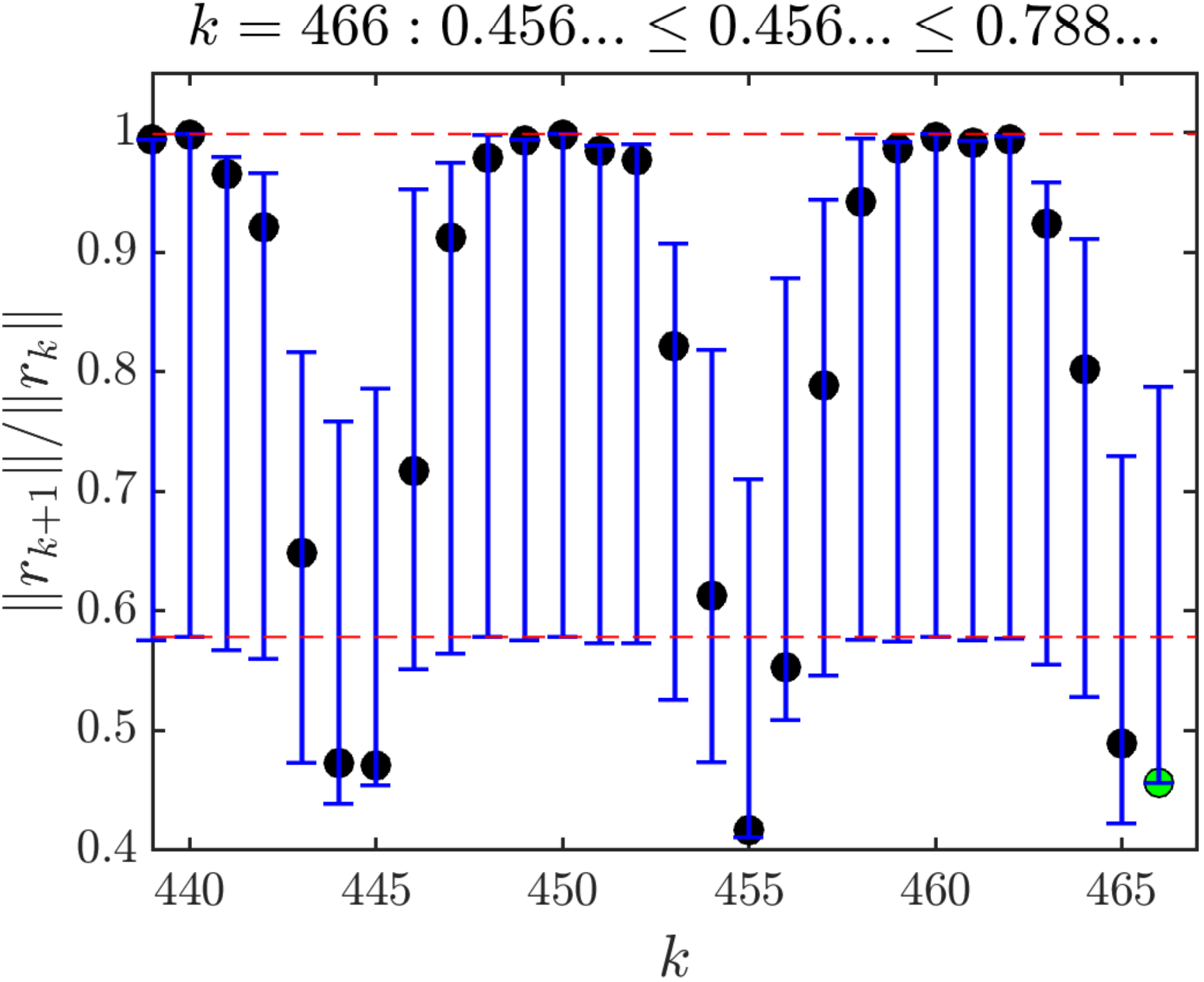}
\quad
\includegraphics[scale=0.35]{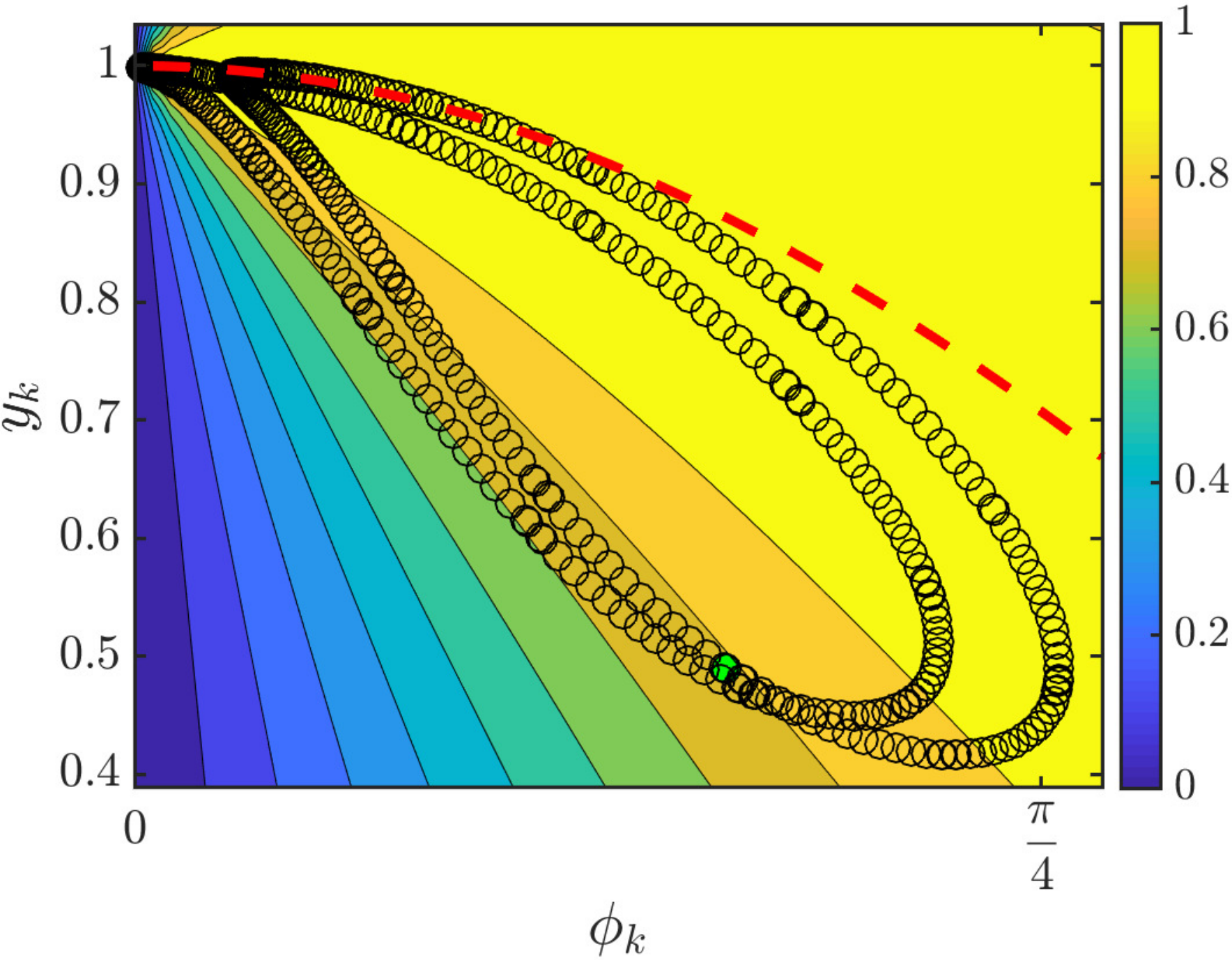}
}
\caption{Illustration of AA(1) residual bounds from \Cref{sec:AA1-bounds} for applying AA(1) to \cref{prob:linear2x2-diag}. 
At left, the AA(1) per-iteration residual reduction $\Vert r_{k+1} \Vert / \Vert r_{k} \Vert = y_{k+1}$ is shown for 28 consecutive iterations, with $k \in [1, 28]$ (top), and $k \in [439, 466]$ (bottom).
For each $k$, the value of $\Vert r_{k+1} \Vert / \Vert r_{k} \Vert$ is shown as a filled circle marker, and the lower and upper bounds of it given in \eqref{eq:rk+1-lower-upper-bound} are shown as blue error bars.
Note that the upper limit of each error bar is a factor of $\sigma_{\max}(M) / \sigma_{\min}(M) \approx 1.73$ times larger than the lower limit.
Dashed red lines in the left panel are the values of the minimum and maximum singular values of $M$.
Also shown in the titles of the plots on the left are, for iterations $k = 28$ (top) and $k = 466$ (bottom), the value of $\Vert r_{k+1} \Vert / \Vert r_k \Vert$ and the bounds from \eqref{eq:rk+1-lower-upper-bound}.
Note that the corresponding values of $\Vert r_{k+1} \Vert / \Vert r_{k} \Vert$ in the plots are represented with green-filled circles.
At right, empty circle markers represent the AA(1) residuals in $\phi_k$--$y_k$ space for iterations $k \in [0, 28]$ (top), and $k \in [0, 466]$ (bottom).
The residual markers corresponding to $k = 28$ (top) and $k = 466$ (bottom) are again colored green.
Note that the bottom right plot is shown over a zoomed in region of $\phi_k$--$y_k$ space so as to better highlight the details.
}
\label{fig:AA1_bounds_exposition} 
\end{figure}

To better understand the relevance of the convergence theory from \Cref{sec:AA1-bounds}, we further consider in \Cref{fig:AA1_bounds_exposition} the convergence behavior of AA(1) for \cref{prob:linear2x2-diag}.
The caption of the figure gives an overview of what is represented.
There are many interesting points to discuss here. 
First, we see in greater detail the almost-periodic behaviour in the per-iteration residual reduction factor $y_{k+1}=\Vert r_{k+1} / \Vert / \Vert r_{k} \Vert$.
For example, from \cref{fig:rnorm_FP_vs_AA1} it appears as though $\Vert r_{k+1} \Vert / \Vert r_{k} \Vert$ will be periodic with period $\approx 10$ (because $\Vert r_k \Vert$ seems to pass through almost six periods in the first $60$ iterations); however, from the top left plot in \Cref{fig:AA1_bounds_exposition}, we see that $\Vert r_{k+1} \Vert / \Vert r_{k} \Vert$ is not quite periodic over the first $\approx 30$ iterations.
Instead, it appears as though $\Vert r_{k+1} \Vert / \Vert r_{k} \Vert$ may exhibit some almost-periodic behavior with period much larger than $\approx 10$, because the bottom left plot looks essentially identical to the top left plot, despite it corresponding to 438 iterations later.
Note that residual reduction factors in the two left plots are slightly different, however, as can be seen by comparing their titles: $\Vert r_{29} \Vert / \Vert r_{28} \Vert = 0.459 \ldots \neq 0.456 \ldots = \Vert r_{467} \Vert / \Vert r_{466} \Vert$, so the behavior is not quite periodic even on the longer timescale. 
From the bottom right panel in \Cref{fig:AA1_bounds_exposition} observe that, as the AA(1) iteration proceeds, the residuals appear confined to a specific region of $\phi_k$--$y_k$ space, and moreover they appear to be filling in two distinct continuous curves, which are reminiscent of limit cycles. We note that 
between consecutive iterations the residual oscillates between these two curves.
Notice also that if the residual was genuinely periodic in these first 466 iterations rather than almost-periodic, then markers would overlap more often and these two curves would not continue being traced out as the iteration proceeds. 

Considering again the left panel in \Cref{fig:AA1_bounds_exposition}, we see that the upper red dashed line that represents the per-iteration residual reduction upper bound $\Vert r_{k+1} \Vert / \Vert r_{k} \Vert \leq \sigma_{\max} (M)$, which may be derived from results in \cite{toth2015} and \cite{evans2020proof}, is pessimistic compared to the new upper residual bound (top blue horizontal line segments).
Interestingly, we also see that both the lower and upper bounds on $\Vert r_{k+1} \Vert / \Vert r_k \Vert$ given in \eqref{eq:rk+1-lower-upper-bound} are relevant, with each being approximately reached at certain iterations. 
On one hand, it is perhaps somewhat surprising that the upper bound can be so pessimistic/the lower bound can be tight, but on the other hand, we know from \Cref{cor:AA1-equality} that both of these bounds are tight in the limit that the condition number of $M$ approaches unity.

Recall from the proof of \cref{thm:AA1-bounds} that the AA(1) residual can be written as $r_{k+1} = M \alpha$, in which $\alpha$ is a vector depending upon $r_k$ and $r_{k-1}$---specifically, see \eqref{eq:S_alpha_form}---,
and that the particular bounds given in \eqref{eq:rk+1-lower-upper-bound} take the form $\sigma_{\min}(M) \Vert \alpha \Vert \leq \Vert r_{k+1} \Vert \leq \sigma_{\max}(M) \Vert \alpha \Vert$.
Clearly, the lower and upper bounds can achieve equality if and only if $\alpha$ is the smallest singular vector or largest singular vector of $M$, respectively (note that the singular vectors and singular values of $M$ in  \Cref{prob:linear2x2-diag} are equal to its eigenvectors and eigenvalues).
Thus, on iterations in the left panel of \Cref{fig:AA1_bounds_exposition} where the lower bound from \eqref{eq:rk+1-lower-upper-bound} is approximately reached, it must be the case that $\alpha$ is approximately the smallest singular vector of $M$, and conversely, on iterations where the upper bound from \eqref{eq:rk+1-lower-upper-bound} is approximately reached, it must be that $\alpha$ is approximately the largest singular vector of $M$. 

The dynamic behavior described above for the AA(1) residual vectors is very different from the behavior of the simple FP iteration, $r_{k+1} = M r_k$. For \Cref{prob:linear2x2-diag} this simple FP iteration is in essence just the power method, with it driving $r_{k}$ to the largest singular vector of $M$ as $k$ increases. 
Thus, while the residuals of the FP iteration do satisfy the local bounds $\sigma_{\min}(M) \leq \Vert r_{k+1} \Vert / \Vert r_k \Vert \leq \sigma_{\max}(M)$, within a very small number of iterations $\Vert r_{k+1} \Vert / \Vert r_k \Vert$ quickly approaches $\sigma_{\max}(M)$.
We believe that the dynamic convergence behavior that AA(1) can exhibit, as seen in \Cref{fig:rnorm_FP_vs_AA1,fig:AA1_bounds_exposition}, for example, is in essence what makes rigorously quantifying its asymptotic convergence rate so challenging.

}

\btxt{
\subsection{Stability considerations when solving the AA least-squares problems}\label{subsec:stability}
We now discuss a practical point that merits some discussion in the context of this paper, namely,
the possible use of regularization in solving least-squares problem
\cref{eq:Andersonbetas} as $x_k$ approaches $x^*$.
Recently, it has been argued in the machine learning literature on Anderson acceleration \cite{scieur2016regularized}
that least-squares problem \cref{eq:Andersonbetas} needs to be regularized by adding a diagonal regularization matrix to the
normal equation matrix $R_k^T R_k$ in (\ref{eq:AAm-beta-form}), when $x_k$ approaches $x^*$,
due to the matrix $R_k^T R_k$ in (\ref{eq:AAm-beta-form}) becoming increasingly singular, see also \cite{washio1997krylov,fu2020anderson,henderson2019damped}.
Here we will refute this for AA.
As already indicated in the discussion on stable methods to solve nearly rank-deficient least-squares problems
in \Cref{sec:intro}, such regularization is not necessary when using appropriate well-established methods to solve the
potentially rank-deficient least-squares problem numerically, see 
 \cite{fang2009two,moler2004numerical,walker2011anderson,lockhart2022performance}.
Moreover, we will show numerically that adding regularization as in \cite{scieur2016regularized} to AA($m$) may be detrimental in that it may destroy the asymptotic convergence advantages
of AA($m$) when accurate solutions are sought.


In \cref{sweep} we compare the $QR$-based approach we use in our implementation with
solving a regularized version of  least-squares problem \cref{eq:Andersonbetas}, for a
variation of \cref{prob:linear2x2-upper-tri} where
\begin{equation}\label{eq:q-linear-2x2-simple-RHS}
  x_{k+1} = M (x_k-b) + b, \qquad
   M=\begin{bmatrix}
  8/9 & 1/4\\
  0  & 2/3
  \end{bmatrix}, \quad
  b=\begin{bmatrix}
  1\\
  1
  \end{bmatrix},
\end{equation}
and $x^*=b$. \rtxt{The initial guess is $x_0 = (1.2, 1.3)^T$.} I think this is the initial guess from the code I saw.

The tests with regularization use
\begin{equation}\label{eq:AAm-beta-form-reg}
\boldsymbol{\beta}^{(k)}  = -(R_k^TR_k + \lambda I)^{-1} R_k^Tr_k,
\end{equation}
where $\lambda>0$ is a small regularization parameter, see, e.g., \cite{washio1997krylov,scieur2016regularized}.
Note that the choice of a proper $\lambda$ is difficult a priori but can be made adaptively, for example, \cite{washio1997krylov}
proposes $\lambda=\epsilon \max(\diag(R_k^T R_k))$, with $\epsilon=10^{-16}$.

\begin{figure}[h]
\centering
\includegraphics[width=.49\textwidth]{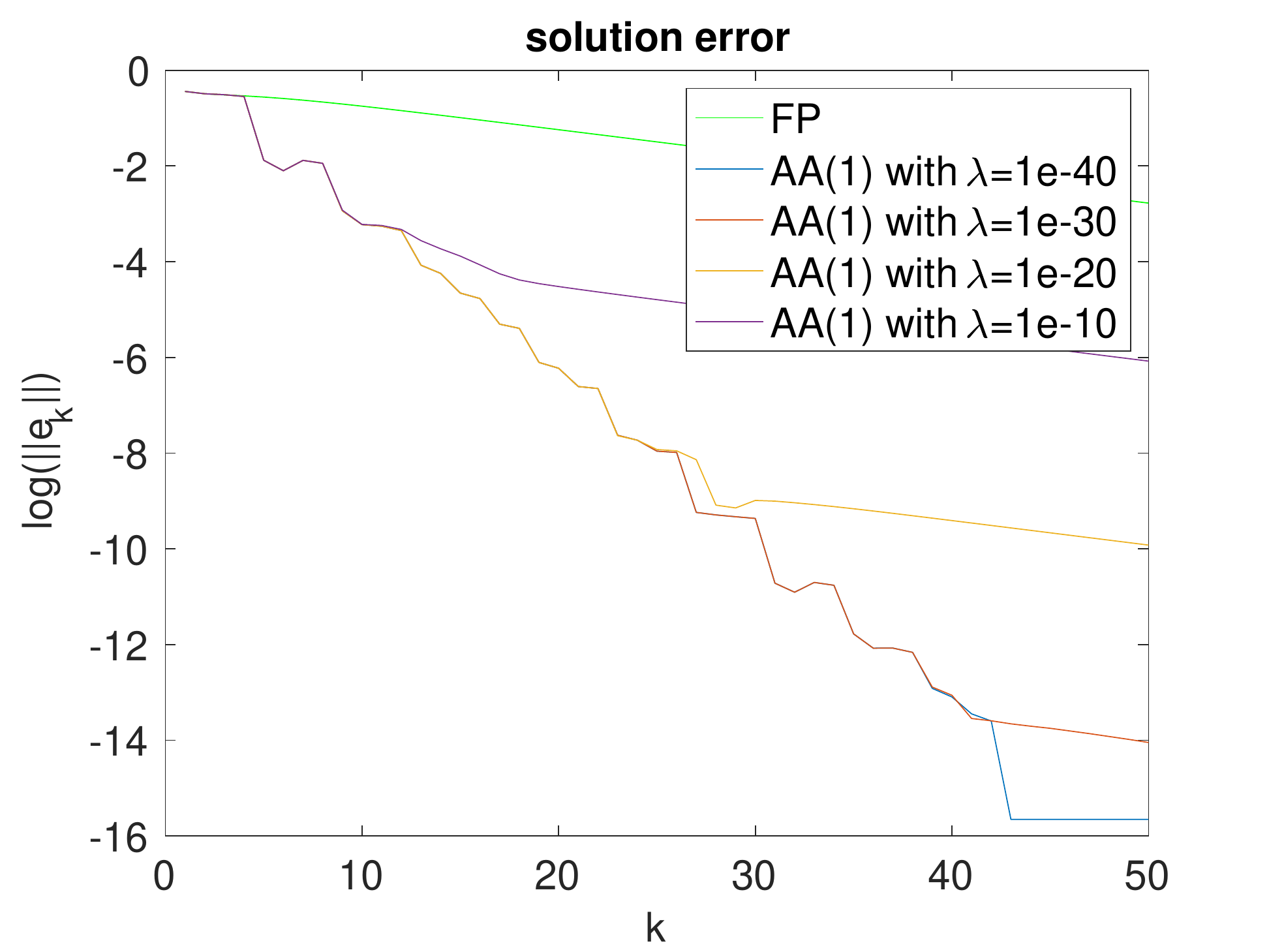}
\includegraphics[width=.49\textwidth]{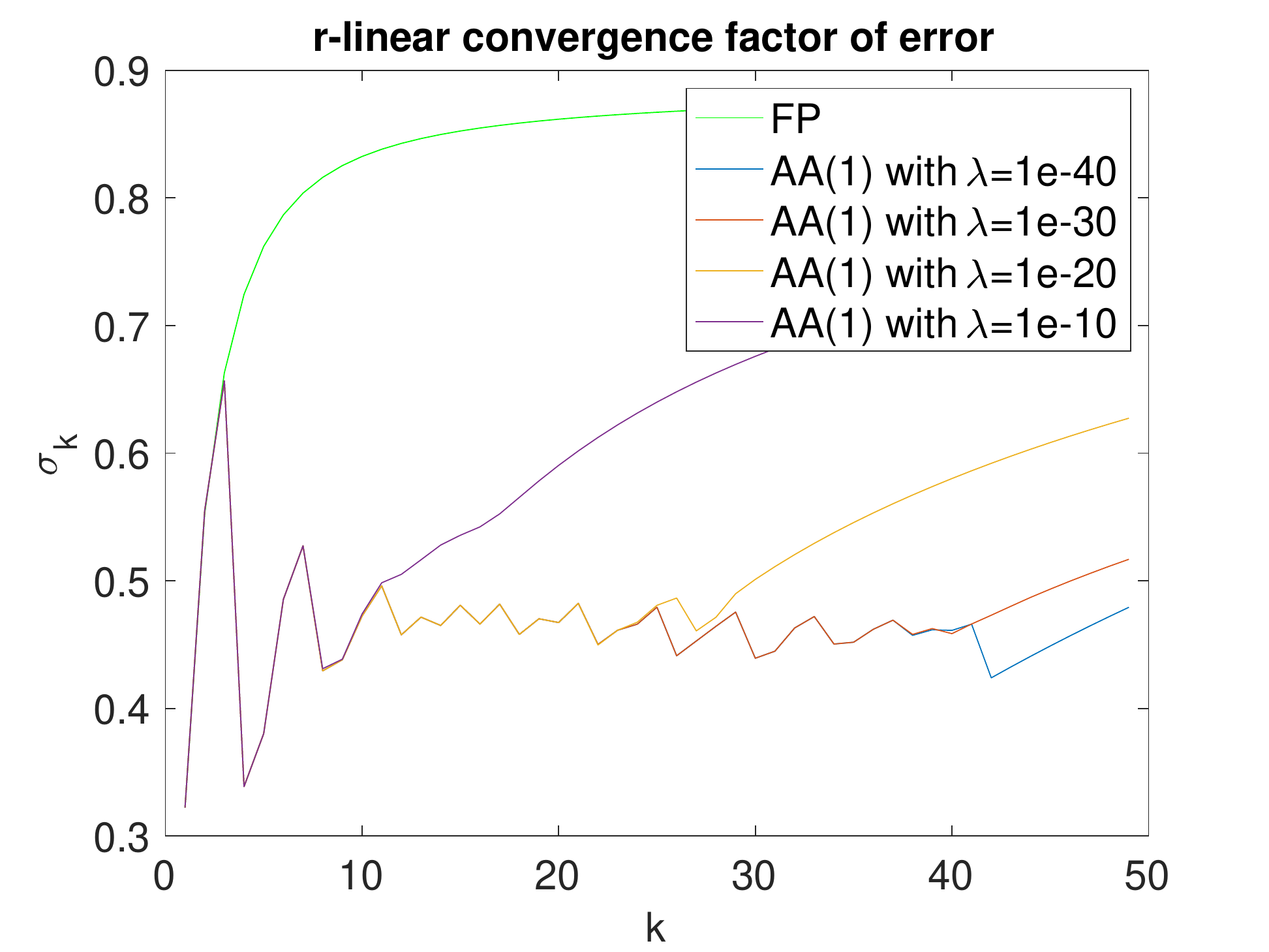}
\caption{AA(1) applied to linear problem \cref{eq:q-linear-2x2-simple-RHS} with least-squares problem \cref{eq:Andersonbetas} approximately
solved using regularized normal equation system \cref{eq:AAm-beta-form-reg}, for various values of the regularization parameter $\lambda$. (left panel) Decimal logarithm of the 2-norm of the solution error. (right panel) r-linear convergence factor of the error.
}
\label{sweep}
\end{figure}

\cref{sweep} shows that, when using regularization, it is crucial to choose $\lambda$ sufficiently small, because choosing
$\lambda$ too large makes the asymptotic convergence factor of AA($m$) revert to the FP factor $\rho(q'(x^*))$, which means
that AA($m$) completely loses its acceleration effect asymptotically as soon as the error is sufficiently small.
Unless the regularization parameter is chosen carefully, regularization, thus, may defeat the purpose of Anderson acceleration
in the asymptotic limit, which is an important case in practice when accurate solutions are required.

It is also worthwhile to point out that, in exact arithmetic, the case of rank-deficient $R_k^T R_k$ is handled properly by the pseudo-inverse formula of \cref{eq:AAm-beta-form-pseudo} which computes the minimum-norm solution when the system is singular. The analysis 
in \cite{LinearacAA} also sheds further light on AA($m$) convergence as $\{x_k\}$ approaches $x^*$. It writes
AA($m$) as the fixed-point method 
\begin{equation}\label{eq:Psi}
\boldsymbol{z}_{k+1}=\Psi(\boldsymbol{z}_k),
\end{equation}
where $\boldsymbol{z} \in\mathbb{R}^{n(m+1)}$ is an augmented state vector composed of the $m+1$ most recent iterates.
The work in \cite{LinearacAA} then analyzes the smoothness properties of the AA($m$) iteration function $\Psi(\boldsymbol{z})$ and the acceleration coefficients $\boldsymbol{\beta}(\boldsymbol{z})$, showing that $\boldsymbol{\beta}(\boldsymbol{z})$ is not continuous at $\boldsymbol{z}^*$, which essentially leads to oscillatory $\boldsymbol{\beta}^{(k)}$ as shown in \cref{AA-Linear-plot}. However, $\Psi(\boldsymbol{z})$, which contains products of the $\beta_{i}^{(k)}$ with terms of the form $q(x_k)-q(x_{k-i})$, is
continuous and Gateaux-differentiable at $\boldsymbol{z}^*$, so the discontinuity of $\boldsymbol{\beta}(\boldsymbol{z})$ does
not preclude convergence of $\{x_k\}$ to $x^*$.

In summary, we find in our numerical experiments that, when the least-squares problem is solved using robust techniques, no regularization of type (\ref{eq:AAm-beta-form-reg}) is needed in practice and AA($m$) sequences $\{x_k\}$ properly converge in a finite number of steps to an approximation of $x^*$ that is numerically exactly a fixed point of \cref{eq:fixed-point}, maintaining the accelerated asymptotic convergence speed up to the point where machine accuracy is reached.
}

\rtxt{
Next we want to verify that AA(1) implemented with Matlab's $QR$ solver leads to backward-stable numerical results that can be
expected to faithfully reflect the theoretical mathematical properties of AA($m$) that were derived in this paper.
Specifically, we explore the normwise relative backward error (NRBE) \cite{Paigebackward,higham1992backward,golub2013matrix} for AA(1).
For a linear system $Ax=b$ and a given approximation $x_k$,  the NRBE is defined as 
\begin{equation}\label{eq:NRBE-definition}
\chi(x_k) = \frac{\|r_k\|}{\|b\| + \|A\| \|x_k\|},
\end{equation}
where $r_k=b-Ax_k$.
When solving $Ax=b$ iteratively, one desires $\chi(x_k)$ to decrease with $k$ until it eventually approaches machine accuracy.

We consider AA(1) applied to the linear system $Ax=Ab$ of \eqref{eq:q-linear-2x2-simple-RHS} with $A=I-M$, solving the AA(1) least-squares problems by Matlab's $QR$ factorization code. The initial guess is $x_0 = (1.2, 1.3)^T$.  The NRBE \cref{eq:NRBE-definition} for solving $Ax=Ab$ is given by
\begin{equation*}
\chi(x_k) = \frac{\| r_k\|}{\|Ab\| + \|A\| \|x_k\|},
\end{equation*}
where $r_k=A x_k -Ab$.
Similarly, for the least-squares problem \cref{eq:Andersonbetas} of AA(1) in the $k$-th step, the NRBE is given by
 \begin{equation*}
\chi(\beta_k) = \frac{\|R_k^T R_k \beta_k+ R_k^T r_k\|}{\|R_k^T r_k\| + \|R_k^T R_k\| \|\beta_k\|},
\end{equation*}
where $r_k=A x_k -Ab$.
 
\cref{fig-NRBE} shows the NRBE for the consecutive least-squares problems solved in each AA(1) iteration $k$, and the NRBE for the linear system as a function of the iteration number $k$. We see that the NRBE for solving the least-squares problem in each AA(1) iteration is of the order of $10^{-15}$, and
the NRBE for solving the linear system decreases steadily to about $10^{-15}$ as the AA(1) iteration count increases.
This confirms that the Matlab $QR$ code solves the least-squares problems in a robust manner, and the
AA(1) iteration obtains an accurate result with small backward error.

\begin{figure}[h]
\centering
\includegraphics[width=.49\textwidth]{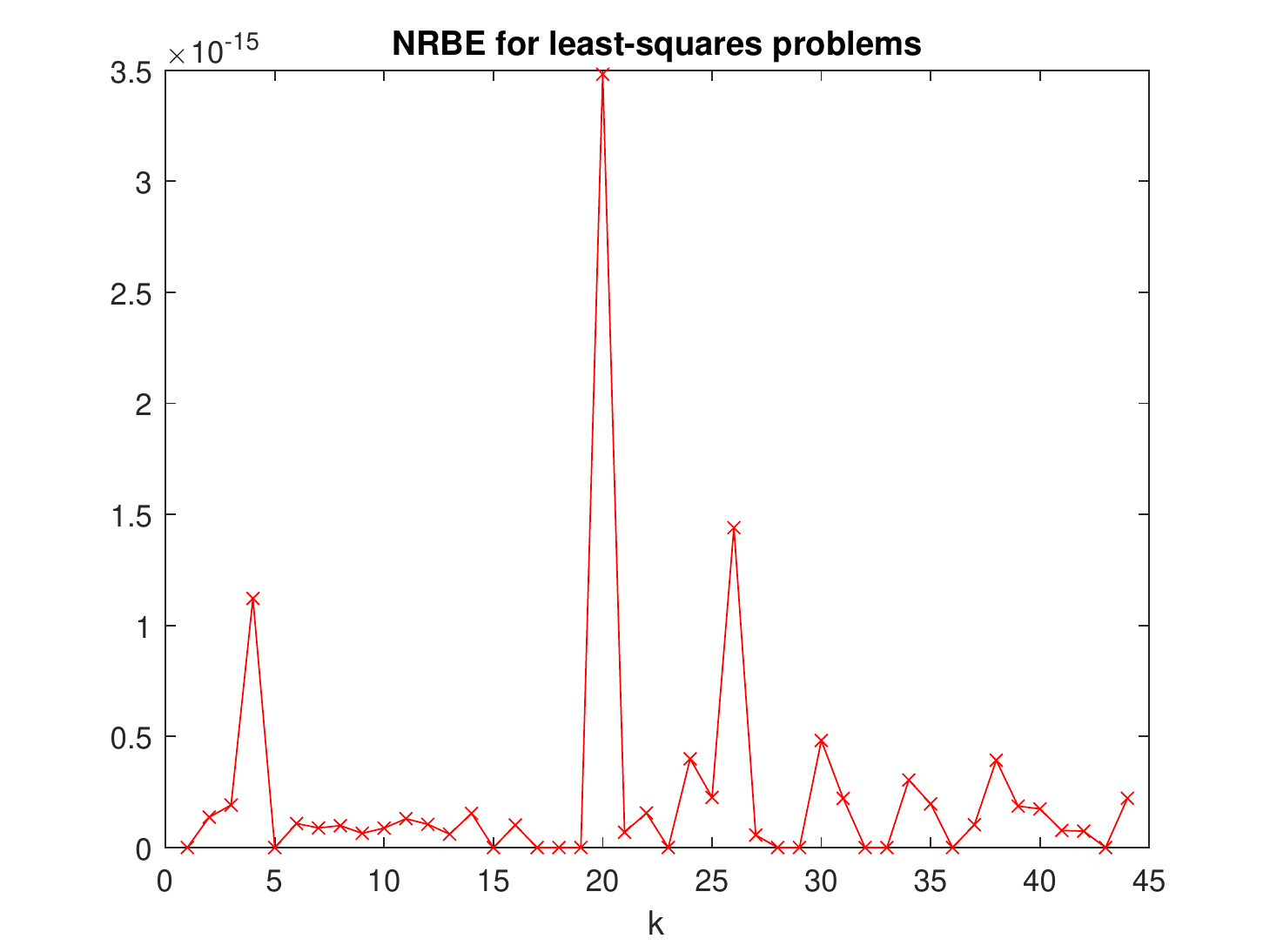}
\includegraphics[width=.49\textwidth]{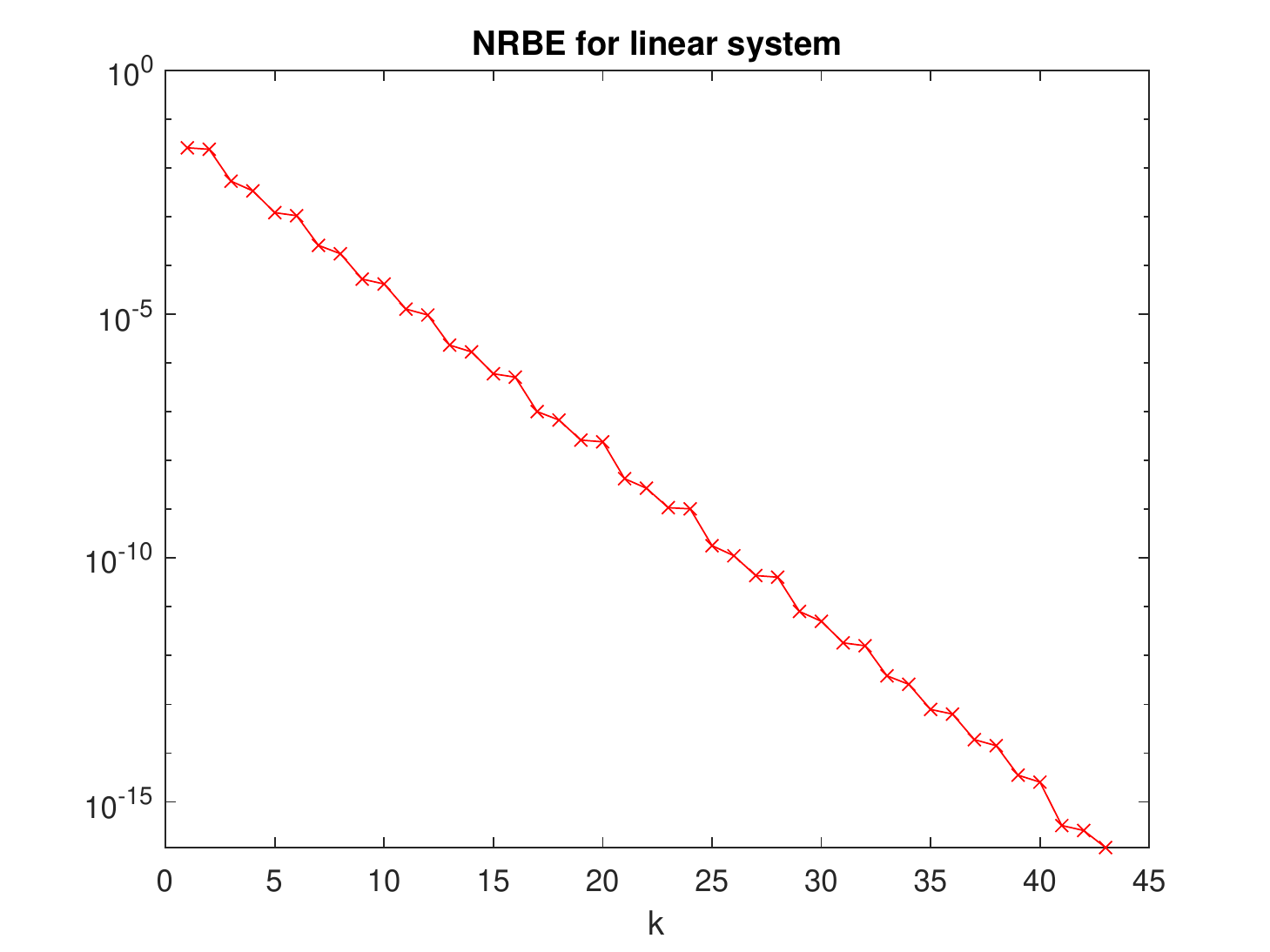}
\caption{NRBE for AA(1) applied to linear problem \eqref{eq:q-linear-2x2-simple-RHS} with the least-squares problems 
solved by Matlab's $QR$ factorization code. (left panel) NRBE for the consecutive least-squares problems solved in each AA(1) iteration $k$. (right panel) NRBE for the linear system as a function of the iteration number $k$.
}
\label{fig-NRBE}
\end{figure}

 }

\subsection{AA(1) for a nonlinear system}
Although our analysis in this paper is for AA($m$) applied to linear iteration \cref{eq:linear-fixed-point}, it is interesting to explore, finally, how the AA(1) convergence patterns we observe and the understanding provided by our theoretical results may extend to a nonlinear system.
\begin{problem}\label{prob:nonlinear2x2}
Consider the nonlinear system
\begin{align}
x_2= x_1^2 \label{eq:exm1-1}\\
x_1+(x_1-1)^2 +x_2^2 =1 \label{eq:exm1-2}
\end{align}
with solution $(x_1^*,x_2^*) = (0,0)$.
Let $x=[x_1 \ x_2]^T$ and define the FP iteration function
\begin{equation*}
  q(x) = \begin{bmatrix} \ds \frac{1}{2}(x_1+x_1^2+x_2^2) \\ \\ \ds \frac{1}{2}(x_2+x_1^2) \end{bmatrix},
\end{equation*}
with Jacobian matrix
\begin{equation*}
q'(x)=
\begin{bmatrix}
x_1+\ds \frac{1}{2}& x_2\\
x_1 & \ds \frac{1}{2}
\end{bmatrix}.
\end{equation*}
We have
\begin{equation*}
  q'(x^*) = \begin{bmatrix}
  \ds \frac{1}{2}& 0 \\
0 & \ds \frac{1}{2}
  \end{bmatrix}, \ \textrm{and} \quad
\rho(q'(x^*)) =\ds \frac{1}{2}<1.
\end{equation*}
\end{problem}

Monte Carlo results with a large number of random initial guesses for the nonlinear \cref{prob:nonlinear2x2}, with FP and AA(1) with $x_1=q(x_0)$, are shown in  \cref{prob3-mc}. From \cref{prob3-mc}, we see that the nonlinear convergence behavior is qualitatively similar to that of the linear case shown in \cref{prob2-mc}, that is, the AA(1) sequences $\{x_k\}$ converge $r$-linearly, while the $r$-linear convergence factors $\rho_{\{x_k\}}$ depend strongly on the initial guess. An upper bound $\rho_{AA(1),x^*}$ seems to exist for $\rho_{\{x_k\}}$ for the AA(1) iteration which is smaller than $\rho_{q,x^*}=1/2$ of fixed-point iteration \cref{eq:fixed-point} by itself.  It is clear that the $\beta_k$ sequences oscillate for this nonlinear problem as the AA(1) iteration approaches $x^*$, and  $\beta_k>-1$ even though we do not have theoretical  results on this for the nonlinear case.

In \cref{prob3-grid}, to be compared with \cref{AA-Linear-plot}, we show the convergence factor of AA(1) applied to the nonlinear \cref{prob:nonlinear2x2} for different initial guesses, where we take $x_0$ on a uniform grid with 101 by 101 points.
It is interesting to see, for the nonlinear problem of \cref{prob3-grid}, that there are also preferred directions with fast convergence for the initial condition near the solution $(0,0)^T$, as in the linear case. Future work will investigate this further, possibly using eigenvectors obtained after linearization.

\begin{figure}[h]
\centering
\includegraphics[width=.49\textwidth]{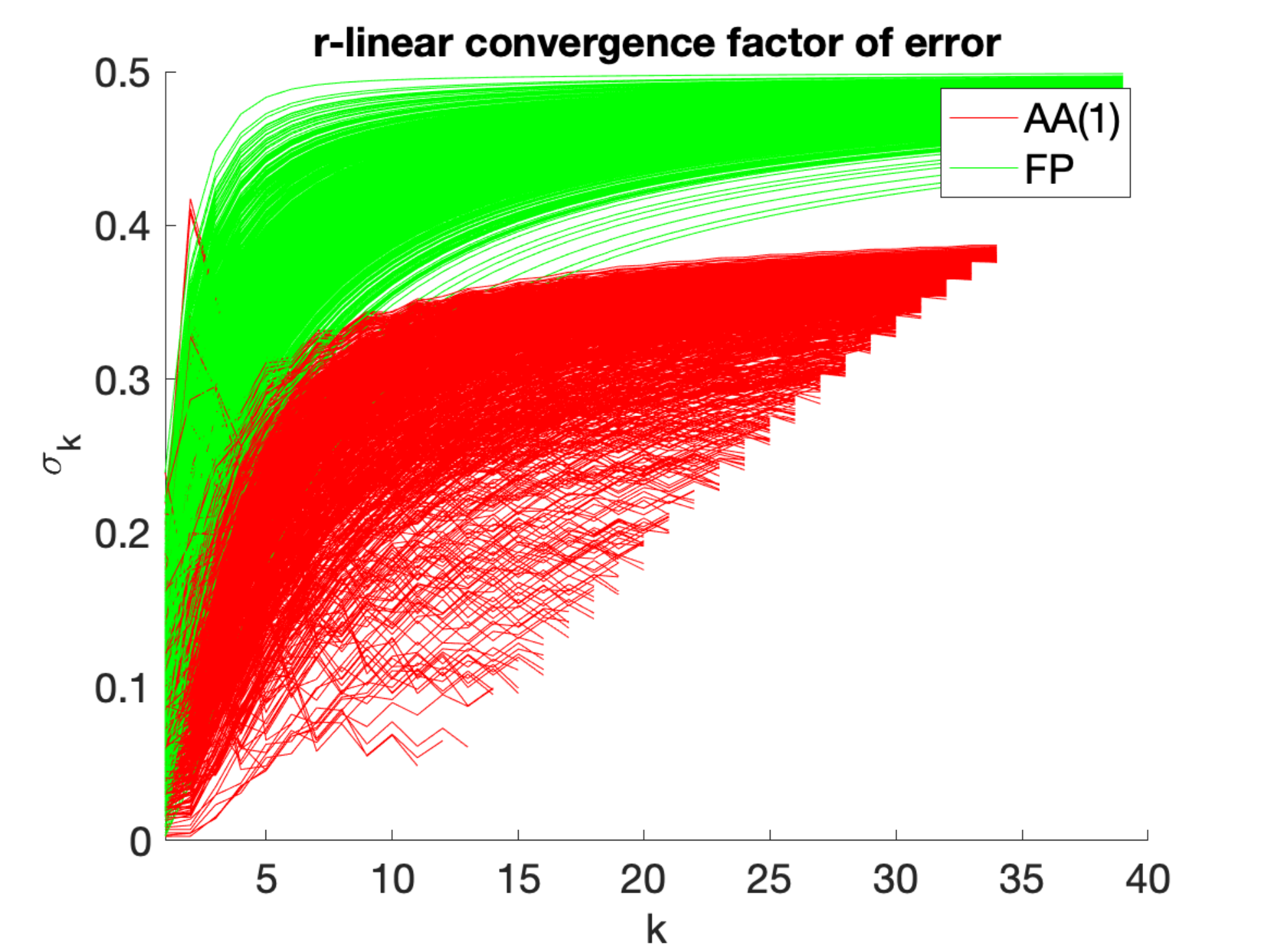}
\includegraphics[width=.49\textwidth]{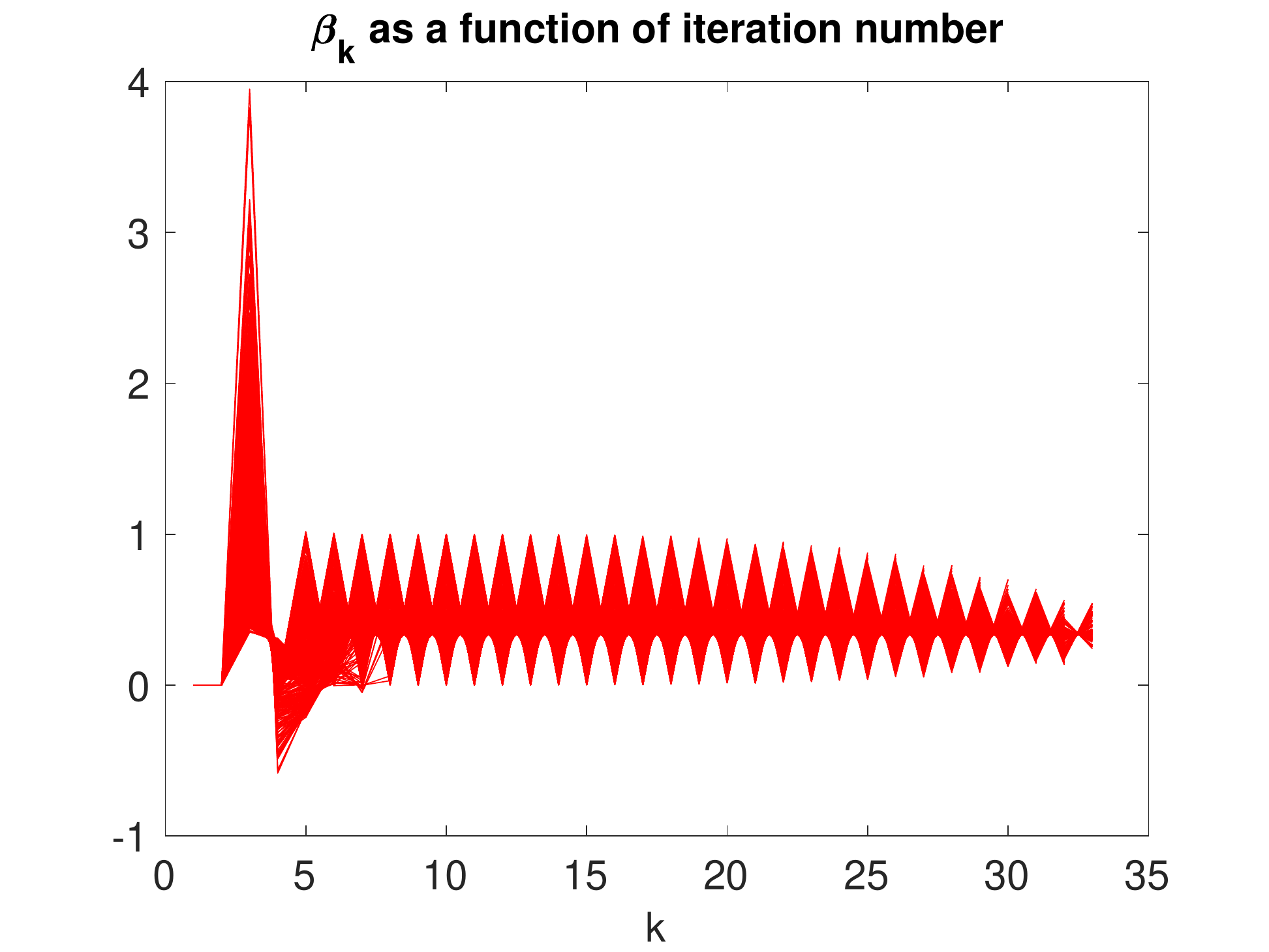}
\caption{Monte Carlo tests for \cref{prob:nonlinear2x2} (nonlinear). (Results from \cite{LinearacAA}.)} \label{prob3-mc}
\end{figure}

\begin{figure}[h]
\centering
\includegraphics[width=.7\textwidth]{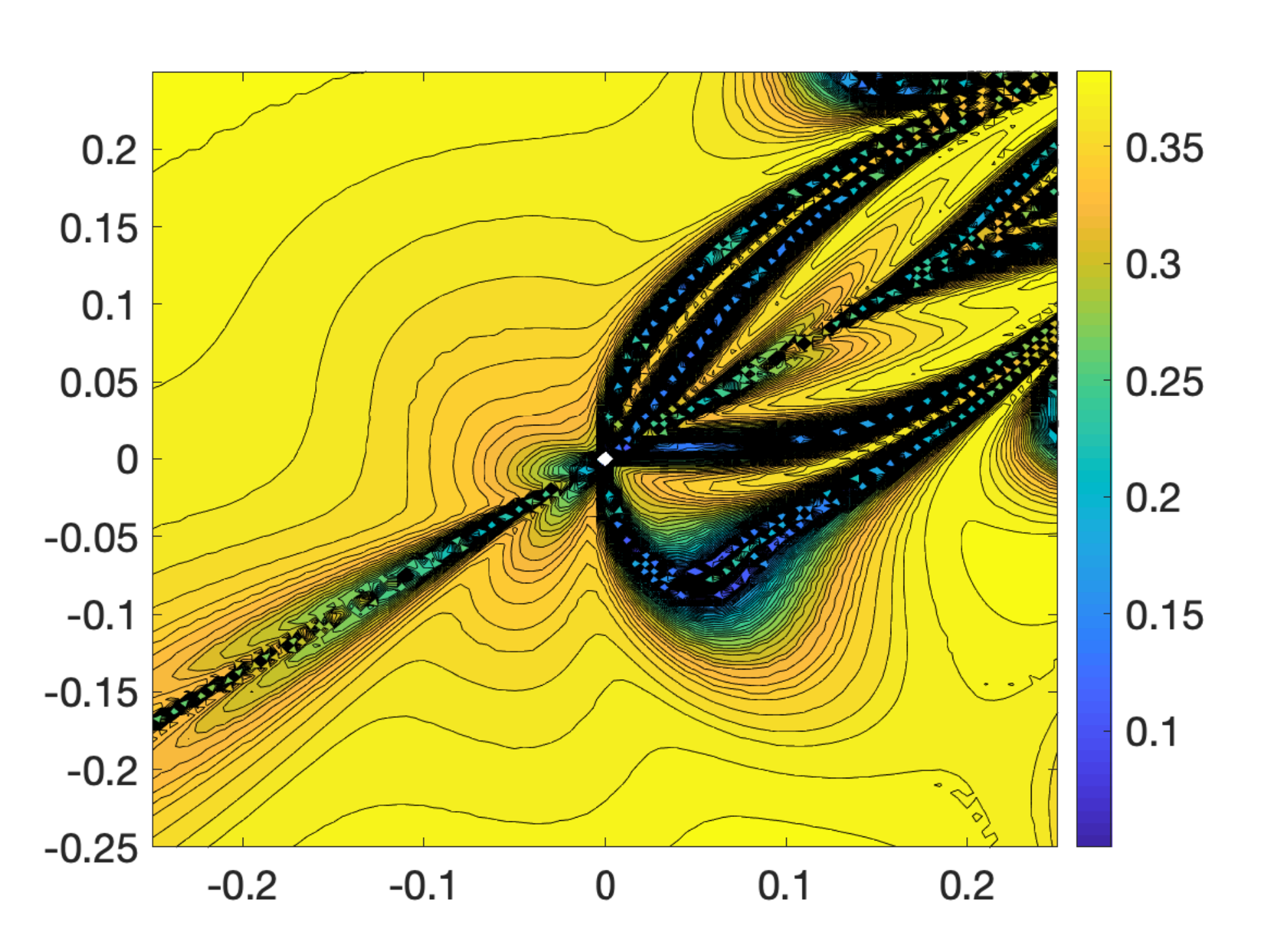}
\includegraphics[width=.7\textwidth]{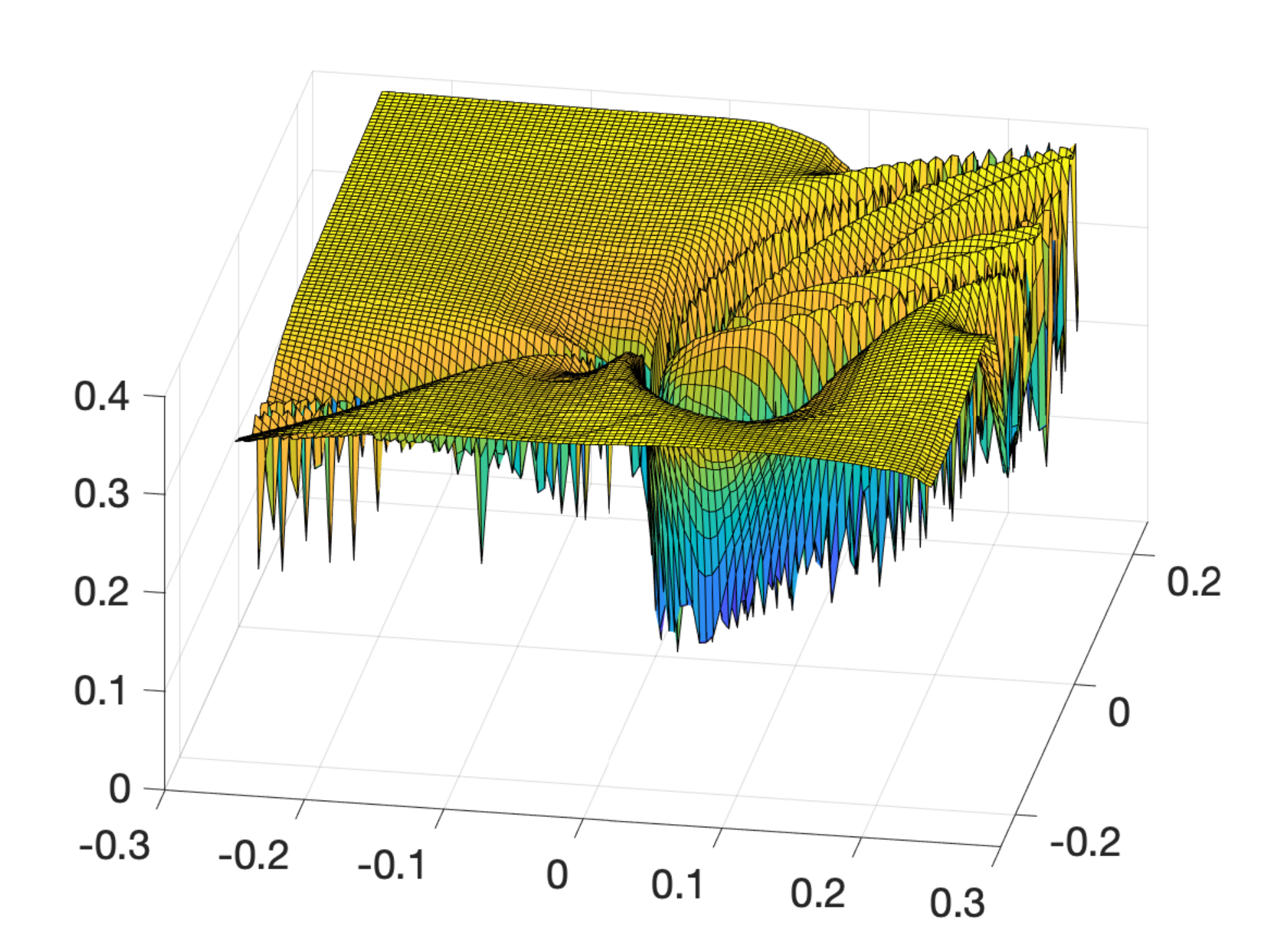}
\caption{Asymptotic convergence factor as a function of initial condition for \cref{prob:nonlinear2x2} (nonlinear).} \label{prob3-grid}
\end{figure}

\section{Conclusion}\label{sec:con}
%
\btxt{
In this paper, we have derived new theoretical results for the AA($m$) method, i.e., Anderson acceleration with window size $m$, applied to the case of linear fixed-point iterations $x_{k+1}=M x_{k}+b$, as a roadway towards improving our understanding of convergence acceleration by AA($m$), which has many open questions.

Writing AA($m$) as a Krylov method with polynomial residual update formulas, we have derived new $(m+2)$-term recurrence relations for the AA($m$) polynomials. This leads to several insights and further results that include a periodic memory effect for the AA($m$) residual polynomials, orthogonality relations, a lower bound on the AA(1) acceleration coefficient $\beta_k$, and explicit nonlinear recursions for the AA(1) residuals and residual polynomials that do not include the acceleration coefficient $\beta_k$. 
\rtxt{
Using these recurrence relations we have also derived new residual convergence bounds for AA(1) in the linear case, showing how the residual reduction in a given iteration varies strongly as a function of the residual reduction in the previous iteration and the angle between the two previous residual vectors. 
}
We have proved results on the invariance of the AA(1) asymptotic convergence factor under scaling of the initial condition, and on finite convergence for eigenvector initial conditions. Extensive numerical results have illustrated how these new theoretical results help to understand various aspects of the convergence behaviour of AA(1) in the linear case. 

One potential avenue for further research is to analyze finite-precision stability aspects of AA($m$) such as orthogonality loss and backward error analysis, like has been done extensively for, e.g., GMRES, see \cite{Paigebackward,liesen2013krylov,PaigeMGS}. This would depend on the least-squares solution method used. It would also be interesting to extend some of this analysis to AA($m$) for the nonlinear case. Various practical aspects of GMRES implementations could also be extended to AA($m$), for example, rescaling the initial guess to have a small initial residual \cite{Paigebackward,liesen2013krylov}.

Theoretical results on determining or bounding AA($m$) asymptotic convergence factors remain a difficult open problem.
The situation for windowed AA($m$) is somewhat similar to restarted GMRES($m$), where few theoretical results are known.
Even for non-restarted GMRES, practically usable asymptotic convergence results tend to be only available for well-defined
classes of well-behaved matrices, and the resulting bounds may strongly depend on the problem class. Similarly, it is likely
that practically useful asymptotic convergence results that may be derived for AA($m$) will also be problem-dependent.

For the case of AA($m$) with stationary coefficients $\beta_i$ in \cref{eq:AA-iteration}, \cite{desterck2020,wang2020} were able to compute the convergence improvement that results from the optimal stationary AA($m$) iteration, because the AA($m$) iteration function is differentiable in
the stationary case and the convergence factor can be computed as the spectral radius of the Jacobian of the AA($m$) iteration function evaluated at the fixed point.
However, for the non-stationary AA($m$) that is widely used in science and engineering applications, the AA($m$) iteration function
in (\ref{eq:Psi}) is not differentiable and it is not known how to determine the $r$-linear convergence factor $\rho_{ AA(m),x^*}$.
It is, hence, not known how to compute by how much $\rho_{ AA(m),x^*}$ improves upon $\rho_{q,x^*}$, even for the linear case with 2$\times$2 matrices $M=I-A$. In \Cref{sec:AA1-bounds} we have derived new results on AA(1) convergence bounds and the dependence of the residual reduction on the residual reduction in the previous iteration and the angle between consecutive previous residuals. Along with numerical results as in \Cref{fig:AA1_bounds_exposition}, this sheds interesting light on quasi-periodic AA(1) convergence patterns observed for symmetric systems. These patterns show some interesting similarities to convergence of restarted GMRES($m$) in this case, which is only partially understood. These results also indicate the difficulty in obtaining practically useful estimates of AA(1) asymptotic convergence factors from per-iteration convergence bounds. 

It is fair to say that Anderson acceleration has proved very effective in many application areas in science, engineering and machine learning, but our understanding of its convergence properties is still far from complete. For this reason, further extending our theoretical understanding of the convergence acceleration provided by the AA($m$) iteration, possibly building on some of the developments made and insights gained in this paper, remains an important topic of further research.}

\appendix

\btxt{
\section{Some results for AA($m$) with general initial guess}\label{app:AAm-random-guesses}
%
In this appendix we present results that lead to the proof of \cref{pro:polynomial-form-AAm-AAj} in \Cref{sec:AAm-krylov}.
We first derive a result, following from expression (\ref{eq:m+1-term-rk-linear-random-guess}), on writing the residual of the more general AA iteration (\ref{eq:general-guess-AAm}) with general initial guess $\{x_0,x_1,\ldots, x_m\}$ as a sum of $m+1$ vectors which are in $m+1$ Krylov spaces, $\{ \mathcal{K}_s(M,r_j)\}_{j=0}^m$ generated by the $m+1$ initial residuals $r_0,r_1,\ldots, r_m$. We also derive recurrence relations for the polynomials that arise in this expression. \cref{pro:polynomial-form-AAm-random-guess} can then easily be specialized to \cref{pro:polynomial-form-AAm-AAj}.

%
\begin{proposition}\label{pro:polynomial-form-AAm-random-guess}
 AA($m$) iteration \cref{eq:general-guess-AAm} with general initial guess $\{x_j\}_{j=0}^{m}$ applied to linear iteration \cref{eq:linear-fixed-point} is a multi-Krylov method. That is, the residual  can be expressed as
\begin{equation}\label{eq:rk-polynomial-form-M-AAm-random-guess}
    r_{k+1} = \sum_{j=0}^m p_{k-m+1,j}(M)\,r_j,\quad k\geq m,
\end{equation}
where the $p_{k-m+1,j}(\lambda)$  are polynomials of degree at most $k-m+1$  satisfying the following relations:
\begin{align}
  p_{1,j}(\lambda)&=-\beta^{(m)}_{m-j}\lambda, \quad j=0,\ldots,m-1; \qquad  p_{1,m}(\lambda)=\Big(1+\sum_{i=1}^{m}\beta_i^{(m)}\Big)\lambda; \label{AAm-AAj-eq1} \\
  p_{k-m+1,j}(\lambda)&=\lambda  \left(\Big(1+\sum_{i=1}^{m}\beta_i^{(k)}\Big)p_{k-m,j} - \sum_{i=1}^{m}\beta_i^{(k)} p_{k-m-i,j}\right),  \label{AAm-AAj-eq3}\\
  & \qquad \qquad \qquad \qquad \qquad \qquad k-m+1>1, j=0,\ldots, m; \nonumber
\end{align}
where for $i=1,\ldots,m,$ and   $j=0,\ldots, m$,
\begin{equation*}
  p_{1-i,j}(\lambda) =  \begin{cases}
         1                  & \text{if}\,\, i=j+1-m,\\
       0  & \text{otherwise}.
  \end{cases}
\end{equation*}
\end{proposition}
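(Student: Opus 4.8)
The plan is to prove the representation (\ref{eq:rk-polynomial-form-M-AAm-random-guess}) by induction on $k$, using the $(m{+}1)$-term residual recurrence (\ref{eq:m+1-term-rk-linear-random-guess}) of \cref{prop:rk-recursive-AAm-random-guess} as the driving identity. The underlying observation is that (\ref{eq:m+1-term-rk-linear-random-guess}) writes each new residual as a fixed linear combination of $M r_k, M r_{k-1}, \ldots, M r_{k-m}$ with scalar coefficients built from the $\beta_i^{(k)}$; unrolling this recurrence collapses every residual into a sum of matrix polynomials in $M$ acting on the fixed initial residuals $r_0, \ldots, r_m$, which is exactly the multi-Krylov form being claimed.

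First I would settle the base case $k = m$. Evaluating (\ref{eq:m+1-term-rk-linear-random-guess}) at $k = m$ expresses $r_{m+1}$ directly in terms of $M r_m, \ldots, M r_0$, and reindexing $r_{m-i} = r_j$ via $j = m - i$ yields the coefficient polynomials (\ref{AAm-AAj-eq1}): the $r_m$ term carries the factor $(1 + \sum_i \beta_i^{(m)})\lambda$, which is $p_{1,m}$, while each $r_j$ with $j < m$ carries $-\beta_{m-j}^{(m)}\lambda$, which is $p_{1,j}$.

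For the inductive step I would assume the representation holds for every residual $r_\mu$ with $\mu \le k$, adopting for the initial residuals $\mu \le m$ the trivial identity representation $r_\mu = \sum_j p_{\mu-m,j}(M) r_j$ with $p_{\mu-m,j} = \delta_{\mu j}$; this is precisely the role of the boundary polynomials $p_{1-i,j}$. Substituting the inductive expressions for $r_k$ and each $r_{k-i}$ into (\ref{eq:m+1-term-rk-linear-random-guess}), commuting the scalar-times-$M$ factors through the polynomials in $M$, and collecting the coefficient of each $r_j$ reproduces exactly the recurrence (\ref{AAm-AAj-eq3}), with the leading $\lambda$ arising from the leading $M$ in (\ref{eq:m+1-term-rk-linear-random-guess}). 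The degree bound $\deg p_{k-m+1,j} \le k-m+1$ then follows by a parallel induction, since the recurrence multiplies by a single $\lambda$ while its highest-degree input $p_{k-m,j}$ has degree at most $k-m$. Finally, the multi-Krylov conclusion is immediate: a polynomial of degree at most $k-m+1$ in $M$ applied to $r_j$ lies in $\mathcal{K}_{k-m+2}(M, r_j)$.

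The hard part will be the index bookkeeping of the boundary terms in the regime $m+1 \le k \le 2m$, where some of the $r_{k-i}$ appearing in (\ref{eq:m+1-term-rk-linear-random-guess}) are still genuine initial residuals rather than quantities already rewritten in multi-Krylov form. Here the recurrence (\ref{AAm-AAj-eq3}) must be fed the correct boundary values $p_{1-i,j}$, and the crux is to check that these encode the identity representation of the initial residuals, i.e.\ that $\sum_j p_{1-i,j}(M) r_j = r_{m+1-i}$ for $i = 1, \ldots, m$, forcing $p_{1-i,j}$ to be the indicator selecting $j = m+1-i$. A useful sanity check along the way is that $r_0$ never appears directly as a boundary term (its smallest relevant first index is only reached through the representation of $r_{m+1}$), which explains why the boundary polynomials are indexed only by $i = 1, \ldots, m$; verifying the coefficient matching explicitly in a small case such as $m = 2$, where one needs $p_{0,j} = \delta_{j,m}$ and $p_{-1,j} = \delta_{j,m-1}$, is the surest way to avoid off-by-one errors.
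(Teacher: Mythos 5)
Your proposal is correct and follows essentially the same route as the paper's own proof: read off the base case $k=m$ from \cref{eq:m+1-term-rk-linear-random-guess}, induct through the $(m+1)$-term residual recurrence, and handle the transient regime $m< k\le 2m$ by letting the ``boundary'' polynomials $p_{1-i,j}$ encode the identity representation of the initial residuals $r_1,\ldots,r_m$ (the paper states this tersely as requiring the $p_{1-i,j}$ to be $0$ or $1$; you spell out the bookkeeping). One point deserves emphasis: the boundary condition you derive, namely $p_{1-i,j}=1$ exactly when $j=m+1-i$ (equivalently $i=m+1-j$), is the correct one---your $m=2$ sanity check $p_{0,j}=\delta_{j,m}$, $p_{-1,j}=\delta_{j,m-1}$ confirms it---whereas the condition printed in the statement, $i=j+1-m$, agrees with yours only when $i=1$ (and hence only fully for $m=1$). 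For $m\ge 2$ and $i\ge 2$ the printed condition forces $j=i+m-1>m$, out of range, so it would make $p_{1-i,j}$ vanish identically for $i\ge 2$; substituting that into \cref{AAm-AAj-eq3} would drop the contributions of the initial residuals $r_1,\dots,r_{m-1}$ and contradict \cref{eq:m+1-term-rk-linear-random-guess}. In other words, your derivation in fact corrects an index typo in the stated boundary values rather than conflicting with the proposition's substance.
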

\begin{proof}
The results of \cref{AAm-AAj-eq1} are  obvious from \cref{eq:m+1-term-rk-linear-random-guess}. For \cref{AAm-AAj-eq3}, when $k\geq 2m+1$,  from  \cref{eq:m+1-term-rk-linear-random-guess},  $r_{k+1}$ is a linear combination of $\{r_{k+1-i}\}_{i=1}^{m+1}$ where the smallest subscript index in the residual is $k-m\geq m+1$. Thus, every term $r_{k+1-j}$ can be rewritten as a linear combination of $\{r_j\}_{j=0}^m$. Then, \cref{AAm-AAj-eq3} can be validated easily. As to $k< 2m+1$, since some terms in $\{r_{k+1-i}\}_{i=1}^{m+1}$ do not contain all $\{r_j\}_{j=0}^m$, we require  that  $p_{1-i,j}(\lambda)=0$ or 1 for \cref{AAm-AAj-eq3} to hold.
\end{proof}
\begin{remark}\label{rmk:multilevel-krylov-method}
 Expression \cref{eq:rk-polynomial-form-M-AAm-random-guess} indicates that the residual $r_{k+1}$ with $k\geq m$ of AA($m$) can be decomposed as a sum of $m+1$ vectors which are in $m+1$ Krylov spaces, $\{ \mathcal{K}_s(M,r_j)\}_{j=0}^m$ or $\{ \mathcal{K}_s(A,r_j)\}_{j=0}^m$, where $s=k-m+2$. Therefore, we can refer to  AA($m$) with general initial guess as a {\em{multi-Krylov space method}}. Note that, in the case of the usual AA($m$) iteration (\ref{eq:AA-iteration}) with one initial guess $x_0$, each $r_j\in \{r_j\}_{j=1}^m$ can, by \cref{pro:polynomial-form-AAm-random-guess}, be expressed as a polynomial in $M$ applied to $r_0$, so AA($m$) is a Krylov space method, that is, $r_{k+1}\in \mathcal{K}_s (M,r_0)$, as formalized in \cref{pro:polynomial-form-AAm-AAj} in \Cref{sec:AAm-krylov}.
\end{remark}

\begin{remark}
In \cref{pro:polynomial-form-AAm-random-guess}, we can, if desired, also rewrite the residual in terms of polynomials in the matrix $A$:
\begin{equation}\label{eq:rk-polynomial-form-A-AAm-random-guess}
    r_{k+1} = \sum_{j=0}^m\widetilde{ p}_{k-m+1,j}(A)\,r_j,\quad k\geq m,
  \end{equation}
where the $\widetilde{p}_{k-m+1,j}(\lambda)$  satisfy the  relations:
\begin{align*}
\widetilde{p}_{1,j}(\lambda)&=-\sum_{i=1}^{m}\beta_i^{(m)}(1-\lambda), \,\, j=0,\ldots,m-1; \\
  \widetilde{p}_{1,m}(\lambda)&=\Big(1+\sum_{i=1}^{m}\beta_i^{(m)}\Big)(1-\lambda); \\
  \widetilde{p}_{k-m+1,j}(\lambda)&=(1-\lambda)\left(\Big(1+\sum_{i=1}^{m}\beta_i^{(k)}\Big) \widetilde{p}_{k-m,j} - \sum_{i=1}^{m}\beta_i^{(k)} \widetilde{p}_{k-m-i,j}\right), \\
  & \qquad \qquad \qquad \qquad \qquad \qquad k-m+1>1, j=0,\ldots, m;
\end{align*}
where for $i=1,\ldots,m,  j=0,\ldots, m$,
\begin{equation*}
 \widetilde{p}_{1-i,j}(\lambda) =  \begin{cases}
         1                  & \text{if}\,\, i=j+1-m,\\
       0  & \text{otherwise}.
  \end{cases}
\end{equation*}
\end{remark}
%

}

\bibliographystyle{siamplain}
\bibliography{AArefII}

\begin{thebibliography}{10}

\bibitem{an2017anderson}
{\sc H.~An, X.~Jia, and H.~F. Walker}, {\em Anderson acceleration and
  application to the three-temperature energy equations}, Journal of
  Computational Physics, 347 (2017), pp.~1--19.

\bibitem{anderson1965iterative}
{\sc D.~G. Anderson}, {\em Iterative procedures for nonlinear integral
  equations}, Journal of the ACM (JACM), 12 (1965), pp.~547--560.

\bibitem{Baker_etal_2005}
{\sc A.~H. Baker, E.~R. Jessup, and T.~Manteuffel}, {\em {A Technique for
  Accelerating the Convergence of Restarted GMRES}}, SIAM Journal on Matrix
  Analysis and Applications, 26 (2005), pp.~962--984.

\bibitem{brune2015composing}
{\sc P.~R. Brune, M.~G. Knepley, B.~F. Smith, and X.~Tu}, {\em Composing
  scalable nonlinear algebraic solvers}, SIAM Review, 57 (2015), pp.~535--565.

\bibitem{sterck2012nonlinear}
{\sc H.~{De Sterck}}, {\em A nonlinear {GMRES} optimization algorithm for
  canonical tensor decomposition}, SIAM J. Scientific Computing, 34 (2012),
  pp.~A1351--A1379.

\bibitem{sterck2013steepest}
{\sc H.~De~Sterck}, {\em Steepest descent preconditioning for nonlinear {GMRES}
  optimization}, Numerical Linear Algebra with Applications, 20 (2013),
  pp.~453--471.

\bibitem{LinearacAA}
{\sc H.~De~Sterck and Y.~He}, {\em Linear asymptotic convergence of {A}nderson
  acceleration: fixed-point analysis}, arXiv:2109.14176,  (2021).

\bibitem{desterck2020}
{\sc H.~De~Sterck and Y.~He}, {\em On the asymptotic linear convergence speed
  of {Anderson} acceleration, {N}esterov acceleration and nonlinear {GMRES}},
  SIAM Journal on Scientific Computing,  (2021), pp.~S21--S46.

\bibitem{evans2020proof}
{\sc C.~Evans, S.~Pollock, L.~G. Rebholz, and M.~Xiao}, {\em A proof that
  {Anderson Acceleration} improves the convergence rate in linearly converging
  fixed-point methods (but not in those converging quadratically)}, SIAM
  Journal on Numerical Analysis, 58 (2020), pp.~788--810.

\bibitem{fang2009two}
{\sc H.-r. Fang and Y.~Saad}, {\em Two classes of multisecant methods for
  nonlinear acceleration}, Numerical Linear Algebra with Applications, 16
  (2009), pp.~197--221.

\bibitem{fu2020anderson}
{\sc A.~Fu, J.~Zhang, and S.~Boyd}, {\em Anderson accelerated douglas--rachford
  splitting}, SIAM Journal on Scientific Computing, 42 (2020),
  pp.~A3560--A3583.

\bibitem{golub2013matrix}
{\sc G.~H. Golub and C.~F. Van~Loan}, {\em Matrix computations}, JHU press,
  2013.

\bibitem{henderson2019damped}
{\sc N.~C. Henderson and R.~Varadhan}, {\em Damped {Anderson} acceleration with
  restarts and monotonicity control for accelerating {EM and EM-like}
  algorithms}, Journal of Computational and Graphical Statistics, 28 (2019),
  pp.~834--846.

\bibitem{higham1992backward}
{\sc D.~J. Higham and N.~J. Higham}, {\em Backward error and condition of
  structured linear systems}, SIAM Journal on Matrix Analysis and Applications,
  13 (1992), pp.~162--175.

\bibitem{ho2017accelerating}
{\sc N.~Ho, S.~D. Olson, and H.~F. Walker}, {\em Accelerating the {U}zawa
  algorithm}, SIAM Journal on Scientific Computing, 39 (2017), pp.~S461--S476.

\bibitem{kindermann2021optimal}
{\sc S.~Kindermann}, {\em Optimal-order convergence of {N}esterov acceleration
  for linear ill-posed problems}, Inverse Problems,  (2021).

\bibitem{liesen2013krylov}
{\sc J.~Liesen and Z.~Strakos}, {\em Krylov subspace methods: principles and
  analysis}, Oxford University Press, 2013.

\bibitem{lipnikov2013anderson}
{\sc K.~Lipnikov, D.~Svyatskiy, and Y.~Vassilevski}, {\em Anderson acceleration
  for nonlinear finite volume scheme for advection-diffusion problems}, SIAM
  Journal on Scientific Computing, 35 (2013), pp.~A1120--A1136.

\bibitem{liu2018parametrized}
{\sc C.~Liu and M.~Belkin}, {\em Parametrized accelerated methods free of
  condition number}, arXiv preprint arXiv:1802.10235,  (2018).

\bibitem{lockhart2022performance}
{\sc S.~Lockhart, D.~J. Gardner, C.~S. Woodward, S.~Thomas, and L.~N. Olson},
  {\em Performance of low synchronization orthogonalization methods in
  {A}nderson accelerated fixed point solvers}, in Proceedings of the 2022 SIAM
  Conference on Parallel Processing for Scientific Computing, SIAM, 2022,
  pp.~49--59.

\bibitem{lott2012accelerated}
{\sc P.~Lott, H.~Walker, C.~Woodward, and U.~Yang}, {\em An accelerated
  {P}icard method for nonlinear systems related to variably saturated flow},
  Advances in Water Resources, 38 (2012), pp.~92--101.

\bibitem{moler2004numerical}
{\sc C.~B. Moler}, {\em Numerical computing with MATLAB}, SIAM, 2004.

\bibitem{ni2009anderson}
{\sc P.~Ni}, {\em Anderson acceleration of fixed-point iteration with
  applications to electronic structure computations}, PhD thesis, Worcester
  Polytechnic Institute, 2009.

\bibitem{niu2020momentum}
{\sc C.~Niu and X.~Hu}, {\em Momentum accelerated multigrid methods}, arXiv
  preprint arXiv:2006.16986,  (2020).

\bibitem{oosterlee2000krylov}
{\sc C.~Oosterlee and T.~Washio}, {\em Krylov subspace acceleration of
  nonlinear multigrid with application to recirculating flows}, SIAM J.
  Scientific Computing, 21 (2000), pp.~1670--1690.

\bibitem{PaigeMGS}
{\sc C.~C. Paige, M.~Rozlo\v{z}n\'{\i}k, and Z.~Strako\v{s}}, {\em Modified
  {G}ram-{S}chmidt ({MGS}), least squares, and backward stability of
  {MGS}-{GMRES}}, SIAM J. Matrix Anal. Appl., 28 (2006), pp.~264--284,
  \url{https://doi.org/10.1137/050630416},
  \url{https://doi.org/10.1137/050630416}.

\bibitem{Paigebackward}
{\sc C.~C. Paige and Z.~Strako\v{s}}, {\em Residual and backward error bounds
  in minimum residual {K}rylov subspace methods}, SIAM J. Sci. Comput., 23
  (2002), pp.~1898--1923, \url{https://doi.org/10.1137/S1064827500381239},
  \url{https://doi.org/10.1137/S1064827500381239}.

\bibitem{pollock2021anderson}
{\sc S.~Pollock and L.~G. Rebholz}, {\em Anderson acceleration for contractive
  and noncontractive operators}, IMA Journal of Numerical Analysis, 41 (2021),
  pp.~2841--2872.

\bibitem{potra2013characterization}
{\sc F.~A. Potra and H.~Engler}, {\em A characterization of the behavior of the
  {A}nderson acceleration on linear problems}, linear Algebra and its
  Applications, 438 (2013), pp.~1002--1011.

\bibitem{saad1986gmres}
{\sc Y.~Saad and M.~H. Schultz}, {\em Gmres: A generalized minimal residual
  algorithm for solving nonsymmetric linear systems}, SIAM Journal on
  scientific and statistical computing, 7 (1986), pp.~856--869.

\bibitem{scieur2016regularized}
{\sc D.~Scieur, A.~d'Aspremont, and F.~Bach}, {\em Regularized nonlinear
  acceleration}, in Advances In Neural Information Processing Systems, 2016,
  pp.~712--720.

\bibitem{toth2015}
{\sc A.~Toth and C.~T. Kelley}, {\em Convergence analysis for {A}nderson
  acceleration}, SIAM J. Numer. Anal., 53 (2015), pp.~805--819.

\bibitem{walker2011anderson}
{\sc H.~F. Walker and P.~Ni}, {\em Anderson acceleration for fixed-point
  iterations}, SIAM Journal on Numerical Analysis, 49 (2011), pp.~1715--1735.

\bibitem{wang2020}
{\sc D.~Wang, Y.~He, and H.~De~Sterck}, {\em On the asymptotic linear
  convergence speed of {Anderson} acceleration applied to {ADMM}}, Journal of
  Scientific Computing, 88:38 (2021).

\bibitem{washio1997krylov}
{\sc T.~Washio and C.~W. Oosterlee}, {\em Krylov subspace acceleration for
  nonlinear multigrid schemes}, Electronic Transactions on Numerical Analysis,
  6 (1997), pp.~3--1.

\end{thebibliography}
\end{document}